\newcommand{\norm}[2]{\| #1 \|_{#2}}
\newcommand{\define}{\coloneqq}
\newcommand{\R}{\mathds{R}}
\def\A{{\mbox {\boldmath $A$}}}
\def\matrix0{{\mbox {\boldmath $O$}}}
\def\vece{{\mbox{\boldmath $e$}}}
\def\vecu{{\mbox{\boldmath $u$}}}
\def\vecv{{\mbox{\boldmath $v$}}}
\def\vecx{{\mbox{\boldmath $x$}}}
\def\vecy{{\mbox{\boldmath $y$}}}
\def\j{{\mbox{\boldmath $1$}}}
\def\vec0{\mbox{\bf 0}}
\def\vecnu{{\mbox{\boldmath $\nu$}}}
\def\vecchi{{\mbox{\boldmath $\chi$}}}
\theoremstyle{definition}
\newtheorem{definition}{Definition}
\newtheorem{example}[definition]{Example}
\newtheorem{remark}[definition]{Remark}
\theoremstyle{plain}
\newtheorem{lemma}[definition]{Lemma}
\newtheorem{theorem}[definition]{Theorem}
\newtheorem{corollary}[definition]{Corollary}
\newcommand*{\claimproofname}{Proof}
\newcommand\tran{\mkern-2mu\raise1.25ex\hbox{$\scriptscriptstyle\top$}\mkern-3.5mu}
\def\Pa{{\mathcal{P}}}
\def\supp{\mathop{\rm supp }\nolimits}
\newcommand{\vol}[1]{\operatorname{vol} \left(#1\right)}
\newcommand{\rt}[1]{\operatorname{rt} \left(#1\right)}
\DeclarePairedDelimiter\abs{\lvert}{\rvert}
\DeclarePairedDelimiter\inn{\langle}{\rangle}
\let\c@figure\c@table
\let\ftype@figure\ftype@table
\let\ext@figure\ext@table
\title{A unified framework for the Expander Mixing Lemma for irregular graphs and its applications}
\author{Aida Abiad
\thanks{\texttt{a.abiad.monge@tue.nl}, Department of Mathematics and Computer Science, Eindhoven University of Technology, The Netherlands}
\qquad \qquad Sjanne Zeijlemaker\thanks{\texttt{s.zeijlemaker@tue.nl},  Department of Mathematics and Computer Science, Eindhoven University of Technology, The Netherlands}}
\date{}
\begin{document}
\maketitle

\begin{abstract}
A unified framework for the Expander Mixing Lemma for irregular graphs using adjacency eigenvalues is presented, as well as two new versions of it. While the existing Expander Mixing Lemmas for irregular graphs make use of the notion of volume (the sum of degrees within a vertex set), we instead propose to use the Perron eigenvector entries as vertex weights, which is a way to regularize the graph. This provides a new application of weight partitions of graphs. The new Expander Mixing Lemma versions are then applied to obtain several eigenvalue bounds for NP-hard parameters such as the zero forcing number, the vertex integrity and the routing number of a graph. 
\end{abstract} 

 \paragraph{Keywords:} irregular graphs, Expander Mixing Lemma, adjacency eigenvalues
\section{Introduction}

 The Expander Mixing Lemma (EML) has served as an important tool relating the spectrum of a graph to the edge distribution and notions of pseudorandomness. Intuitively, it says that a regular graph behaves like a random graph whenever the second largest eigenvalue in absolute value is small. The Expander Mixing Lemma is often attributed to Alon and Chung~\cite{AC1988}. However, similar results were shown previously by Bussemaker, Cvetkovi\'c and Seidel~\cite{BCS1978} and Haemers~\cite{haemersThesis}. In his PhD thesis, Haemers also proved an Expander Mixing Lemma for semiregular graphs using eigenvalue interlacing~\cite{haemersThesis}. A stronger version of the original lemma by Alon and Chung was later proposed by Krivelevich and Sudakov~\cite{KS2006}.

The popularity of the  Expander Mixing Lemma has also inspired several versions for irregular graphs based on the adjacency eigenvalues. Krivelevich and Sudakov~\cite{KS2006} noted that their Expander Mixing Lemma for regular graphs can be extended to general graphs. Chung~\cite{chung2004} also proposed a similar statement that closely resembles the original proof by Krivelevich and Sudakov. For the Laplacian matrix, an Expander Mixing Lemma was proved by Chung~\cite{chung2004}. 
Such Expander Mixing Lemmas for irregular graphs have been shown to be useful tools to study other graph properties. Instances of it are the recent work of Byrne and Tait~\cite{BT2023}, who used the version by Krivelevich and Sudakov to improve upper bounds on even-cycle creating Hamilton paths, or the application of the irregular Expander Mixing Lemma by Chung to bound the toughness of a graph~\cite{chen2022,huang2022toughness}, upper bound the routing number~\cite{horn2020routing} and derive spectral conditions for Hamiltonicity~\cite{fan2012spectral}.  

Given the many variants of the Expander Mixing Lemma for irregular graphs, their similarity in form and their ad hoc applications, it makes sense to aim for a generalized framework that unifies these results. More general Expander Mixing Lemmas for (not necessarily square) matrices have been proposed by Butler~\cite{Butler2006} and Nikiforov~\cite{nikiforov2009}. In this article, we propose a stronger generalized Expander Mixing Lemma, which encompasses the known variants from the literature. In addition, we derive two new Expander Mixing Lemmas for irregular graphs from our main result based on a new kind of centrality measure, as well as their converse statement. While the existing Expander Mixing Lemmas for irregular graphs by Krivelevich and Sudakov, and by Chung, make use of the notion of volume (the sum of degrees within a vertex set), in this article we instead propose to use the entries of the Perron eigenvector of the adjacency matrix to assign weights to the vertices. Doing so, we ``regularize'' the graph, in the sense that the weight-degree of each vertex becomes a constant. This lies in the core of the theory of weight partitions of a graph, which has been shown to be a powerful method to extend several results of spectral nature to general graphs, see e.g.~\cite{A2019,F1999,FG1999,haemers1979eigenvalue,LW2012}. We also propose a converse to both statements, i.e., we show that if the number of edges between any two vertex sets is close to the expected distribution in a random graph, the second largest eigenvalue in absolute value is bounded. Such results have been proposed for several other types of Expander Mixing Lemmas, see~\cite{BL2005,bollobas2004,kenter2016,Butler2006}.

The structure of this paper is as follows. In Section \ref{sec:existing}, we start with an overview of some of the known Expander Mixing Lemmas for irregular graphs. We then propose a general Expander Mixing Lemma for matrices in Section~\ref{sec:expandermixingnew}, which unifies these results. As a corollary, we find several interesting new variants of the Expander Mixing Lemma for irregular graphs based on the Perron eigenvector. In Section~\ref{sec:applications}, these new versions are used to find new spectral bounds on several NP-hard graph parameters, such as the zero forcing number, the vertex integrity, the routing number and the $k$-independence number. Finally, in Section~\ref{sec:converse}, we obtain a converse for the new Expander Mixing Lemma with Perron weights, generalizing the classic result of Bilu and Linial~\cite{BL2005}.

	\section{Preliminaries}\label{sec:prelim}
		
		Throughout this article, we denote by~$I$ an identity matrix,~$J$ an all-ones matrix and by~$\j$ an all-ones vector whose dimensions will be clear from the context. The Euclidean norm of a vector is denoted by~$\norm{\cdot}{}$ and we denote~$[n]\coloneqq\{1,2,\dots,n\}$.
        
        For a simple undirected connected graph~$G = (V, E)$, let~$n = |V|$
        be the number of vertices and~$A = A(G)$ its adjacency matrix.
        Throughout this article, we always assume~$G$ to be simple, undirected and connected. The set~$G(u)$ denotes the neighborhood of a vertex~$u \in V$, i.e., the set of vertices adjacent to $u$, and we write~$u \sim v$ if~$u,v \in V$ are adjacent. The closed neighborhood of~$u$ is denoted by~$G[u] \coloneqq G(u)\cup \{u\}$. The degree of a vertex~$u$ is~$d_u \coloneqq |G(u)|$. We denote the smallest and largest degree of a graph by~$\delta$ and~$\Delta$ respectively. For a subset~$S\subseteq V$, let~$\vol{S} = \sum_{v\in S}d_v$ and~$\overline{S} = V\setminus S$. The set of edges between two sets~$S,T\subseteq V$ is denoted by~$E(S,T)$ and we define~$e(S,T) = \abs{E(S,T)}$, where edges that lie in~$S\cap T$ are counted twice. We denote the characteristic vector of a vertex set~$S$ by~$\vecchi_S$.

        The adjacency eigenvalues of~$A$ are denoted by~$\lambda_1\geq \cdots \geq \lambda_n$ and we denote~$\lambda=\max\{\abs{\lambda_2},\abs{\lambda_n}\}$. Let~$D$ be the diagonal matrix with entries~$d_{vv} = d_v$. The \textit{Laplacian eigenvalues} of~$G$, $0 = \mu_0 \leq \dots \leq \mu_{n-1}$, are the eigenvalues of the matrix~$L\coloneqq D-A$ and the \textit{normalized adjacency eigenvalues}, $1=\sigma_1\ge \dots \ge \sigma_n \ge -1$, are the eigenvalues of the matrix~$D^{-1/2}AD^{-1/2}$.
     
        Since we assume graphs to be connected (so~$A$ is irreducible), the Perron-Frobenius
        Theorem assures that~$\lambda_{1}$ is simple, positive, and has a positive eigenvector. Moreover, it is the only eigenvalue that admits such an eigenvector. We denote this eigenvector by~$\vecnu=(\nu_{1},\ldots,\nu_{n})^{\top}$ and refer to it as the \emph{Perron eigenvector}. It is assumed to be normalized such that~$\norm{\vecnu}{} = 1$. If~$G$ is regular, we have~$\vecnu=\frac{1}{\sqrt{n}}\j$.
    Let~$\vece_i$ denote the $i^{\text{th}}$ standard basis vector of appropriate length. The map~$\rho\colon \mathfrak{P}(V) \to \R^n$, where~$\mathfrak{P}(V)$ denotes the power set of~$V$, is defined as $\rho(U) \define \sum_{u\in U} \nu_{u} \vece_{u}$ for any $U\neq \emptyset$. By convention, $\rho(\emptyset) = 0$ and~$\rho(u)\define \rho(\{u\})$.
 
    In a~$d$-regular graph, the rows of its adjacency matrix sum up to its largest eigenvalue,~$\lambda_1=d$. If~$G$ is not regular, we can regularize it by assigning the weight~$\rho(u)$ to each vertex~$u\in V$. This ensures that the \emph{weight-degree} $\delta_u^{\ast}$ of each vertex $u\in V$ equals the largest eigenvalue, i.e.,
        \[
          \delta^{*}_{u}
          \define
          \frac{1}{\nu_{u}}\sum_{v\in G (u)}\nu_{v}
          =
          \lambda_{1}.
        \]	 
    
	If $\mathcal{P}$ is a partition of the vertex set $V=V_{1}\cup \cdots \cup V_{m}$, the \emph{weight-intersection number}
        of $u\in V_{i}$, $i \in [m]$, is
        \begin{align*}
          b^{*}_{ij}(u)
          &\define
          \frac{1}{\nu_{u}}\sum_{v \in G(u)\cap V_{j}}\nu_{v},
          && i,j \in [m].
        \end{align*}
    The partition~$\mathcal{P}$ is \emph{weight-equitable} (or \emph{weight-regular)} if~$b^*_{ij}(u) = b^*_{ij}(v)$ for all~$i,j \in [m]$ and~$u,v \in V_i$.
    In other words, the weight-intersection numbers are independent of the choice of~$u \in V_i$.
    In this case, we write~$b^*_{ij}$ instead of~$b^{*}_{ij}(u)$, $u \in V_i$. The \emph{weight-quotient matrix} of $A$, with respect to $\mathcal{P}$, is the symmetric matrix with entries
	\begin{align*}
		\tilde{b}^{*}_{ij}=\sum_{uv \in
			E(V_{i},V_{j})}\nu_{u}\nu_{v}=\tilde{b}^{*}_{ji}
	\end{align*}
	and the \emph{normalized weight-quotient matrix} of $A$, $\bar{B}^{*}=(\bar{b}^{*}_{ij})_{i,j \in [m]}$, is defined as
    \begin{equation*}
    \bar{b}^{*}_{ij}=\frac{\tilde{b}^{*}_{ij}}{\|\rho (V_{i})\|\|\rho (V_{j})\|}.
    \end{equation*}
    For more background on weight-equitable partitions, see~\cite{A2019,F1999}.

\section{Existing Expander Mixing Lemmas for irregular graphs}\label{sec:existing}

Below, we survey some existing versions of the Expander Mixing Lemma for irregular graphs. We focus in particular on those that use the adjacency spectrum.

The Expander Mixing Lemma for regular graphs is usually attributed to Alon and Chung~\cite{AC1988}. 

\begin{theorem}[Alon, Chung \cite{AC1988}] \label{th:emlOriginal}
Let $G$ be a $d$-regular
$n$-vertex graph. Let $\lambda_1\geq \cdots \geq \lambda_n$ be the eigenvalues of the adjacency matrix of $G$ and define $\lambda = \max\{\abs{\lambda_2},\abs{\lambda_n}\}$. Then, for every $S, T \subseteq V(G)$, we have
\[\abs*{e(S,T) - \frac{d\abs{S}\abs{T}}{n}}\le \lambda \sqrt{ \abs{S}\abs{T}\left(1-\frac{\abs{S}}{n}\right)\left(1-\frac{\abs{T}}{n}\right)}.\]
 
\end{theorem}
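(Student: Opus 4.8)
The plan is to realize $e(S,T)$ as a bilinear form in the two characteristic vectors and then exploit the spectral gap of a regular graph. First I would write $e(S,T) = \vecchi_S^\top A \vecchi_T$, where edges inside $S\cap T$ are automatically double-counted, matching the convention fixed in the preliminaries. Since $G$ is $d$-regular, the all-ones vector is an eigenvector of $A$ with eigenvalue $\lambda_1 = d$, so $\frac{1}{\sqrt{n}}\j$ is a unit Perron eigenvector. I would then fix an orthonormal eigenbasis $\frac{1}{\sqrt n}\j = \vecv_1,\vecv_2,\dots,\vecv_n$ with $A\vecv_i = \lambda_i \vecv_i$.

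Next I would orthogonally decompose each characteristic vector relative to $\j$. Writing $\vecchi_S = \frac{\abs{S}}{n}\j + \vecx$ with $\vecx \perp \j$, and likewise $\vecchi_T = \frac{\abs{T}}{n}\j + \vecy$ with $\vecy \perp \j$, the mixed terms vanish: from $A\j = d\j$ one gets $\j^\top A \vecy = d\,\j^\top\vecy = 0$ and $\vecx^\top A \j = d\,\vecx^\top \j = 0$, since both $\vecx$ and $\vecy$ are orthogonal to $\j$. Using also $\j^\top A \j = dn$, this yields
\[
e(S,T) = \frac{\abs{S}\abs{T}}{n^2}\,\j^\top A \j + \vecx^\top A \vecy = \frac{d\abs{S}\abs{T}}{n} + \vecx^\top A \vecy,
\]
so that the quantity on the left-hand side of the claimed inequality is precisely $\abs{\vecx^\top A \vecy}$.

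To finish, I would bound $\abs{\vecx^\top A \vecy}$ using that both $\vecx$ and $\vecy$ lie in $\j^\perp = \mathspan\{\vecv_2,\dots,\vecv_n\}$, where $A$ acts with all eigenvalues bounded in absolute value by $\lambda = \max\{\abs{\lambda_2},\abs{\lambda_n}\}$. Expanding $\vecx = \sum_{i\ge 2} a_i \vecv_i$ and $\vecy = \sum_{i\ge 2} b_i \vecv_i$ gives $\vecx^\top A \vecy = \sum_{i\ge 2}\lambda_i a_i b_i$, and a single application of Cauchy--Schwarz produces $\abs{\vecx^\top A \vecy} \le \lambda\,\norm{\vecx}{}\,\norm{\vecy}{}$. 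It then remains to compute the lengths of the projections: since $\norm{\vecchi_S}{}^2 = \abs{S}$ and the component along $\frac{1}{\sqrt n}\j$ has squared length $\abs{S}^2/n$, the Pythagorean theorem gives $\norm{\vecx}{}^2 = \abs{S}\left(1 - \frac{\abs{S}}{n}\right)$, and symmetrically $\norm{\vecy}{}^2 = \abs{T}\left(1 - \frac{\abs{T}}{n}\right)$. Substituting these into the spectral bound recovers exactly the right-hand side of the statement.

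The only genuinely delicate step is the spectral estimate $\abs{\vecx^\top A \vecy} \le \lambda\,\norm{\vecx}{}\,\norm{\vecy}{}$, which is where the definition of $\lambda$ enters and where regularity is essential, since it is regularity that makes $\j$ itself an eigenvector and thereby confines $\vecx$ and $\vecy$ to the span of the remaining eigenvectors. Everything else is routine bookkeeping with the orthogonal decomposition and with the norms of characteristic vectors.
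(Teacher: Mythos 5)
Your proof is correct, and it is essentially the regular-graph specialization of the argument the paper uses for its unifying result (Theorem~\ref{th:superEML}): there, the characteristic vectors are expanded in an orthonormal basis of singular vectors of $R^{-1/2}BC^{-1/2}$, the coefficient on the leading singular vector is identified with the ``expected'' term (which for a $d$-regular graph is exactly your component $\frac{\abs{S}}{n}\j$), and the remaining terms are bounded via Cauchy--Schwarz by the second largest singular value, which plays the role of your $\lambda$. The paper states Theorem~\ref{th:emlOriginal} itself without proof, citing Alon and Chung, but since your decomposition along $\j$ and the spectral bound on $\j^\perp$ are precisely what the paper's general proof reduces to upon taking $B=A$, $\vecx=\vecy=\j$, and $R=C=dI$, you have taken essentially the same route.
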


\noindent It should be noted, however, that similar statements were shown previously by Bussemaker, Cvetkovi\'c and Seidel~\cite[Theorem 4.1]{BCS1978} and Haemers~\cite[Theorem 2.1.4(i)]{haemersThesis}. In addition, Haemers proved an Expander Mixing Lemma in terms of combinatorial designs using interlacing techniques (see~\cite[Theorem 3.1.1]{haemersThesis} and~\cite[Theorem 5.1]{H1995}). For more details on the equivalence of these results, we refer the reader to~\cite{bishnoiblog}.

Inspired by the work of the aforementioned authors, several variants of the Expander Mixing Lemma were derived that also hold for irregular graphs. We will summarize some of these results below.

In his PhD thesis, Haemers proved the following Expander Mixing Lemma for irregular graphs using eigenvalue interlacing. 

\begin{theorem}[Haemers {\cite[Theorem 2.1.2]{haemersThesis}}]
 Let~$G=(V,E)$ be a graph with adjacency  eigenvalues~$\lambda_1\ge \dots \ge \lambda_n$ and~$\lambda = \max\{\abs{\lambda_2},\abs{\lambda_n}\}$. Let $\vol{S} = \sum_{v\in S}d_v$ for $S\subseteq V$. Then for any~$S\subset V$, 
\[\abs*{e(S,\overline{S}) - \frac{\vol{S}\vol{\overline{S}}}{\vol{V}}}\le \lambda \frac{\lambda_1|S||\overline{S}|}{\vol{V}}.\]
\label{th:irregHaemers}
\end{theorem}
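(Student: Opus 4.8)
The plan is to prove this with Haemers' eigenvalue interlacing, applied to the $2\times 2$ quotient matrix of the partition $\{S,\overline{S}\}$, and to recover $e(S,\overline{S})$ from the \emph{product} of the two quotient eigenvalues (i.e.\ the determinant) rather than from a single quadratic form. Concretely, since $\emptyset\neq S\neq V$, the vectors $\vecchi_S$ and $\j$ are linearly independent, so the matrix $W=[\,\vecchi_S \mid \j\,]$ has full column rank. I would form the quotient matrix $B=(W^{\top}W)^{-1}W^{\top}AW$; as $B$ is similar to the symmetric matrix $Q^{\top}AQ$ with $Q=W(W^{\top}W)^{-1/2}$ having orthonormal columns, its eigenvalues $\eta_1\ge\eta_2$ are real and interlace the spectrum of $A$, namely $\lambda_1\ge\eta_1\ge\lambda_{n-1}$ and $\lambda_2\ge\eta_2\ge\lambda_n$.

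The reason to use $\j$ as the second column is that it makes the volume appear automatically: $\vecchi_S^{\top}A\j=\vol{S}$, $\j^{\top}A\j=\vol{V}$, and $\vecchi_S^{\top}A\vecchi_S=e(S,S)=\vol{S}-e(S,\overline{S})$, while $W^{\top}W=\bigl(\begin{smallmatrix}|S| & |S|\\ |S| & n\end{smallmatrix}\bigr)$ has determinant $|S|\,|\overline{S}|$. Taking determinants then gives
\[
\eta_1\eta_2=\det B=\frac{\det(W^{\top}AW)}{\det(W^{\top}W)}=\frac{e(S,S)\,\vol{V}-\vol{S}^{2}}{|S|\,|\overline{S}|}=\frac{\vol{S}\,\vol{\overline{S}}-e(S,\overline{S})\,\vol{V}}{|S|\,|\overline{S}|},
\]
where the last equality uses $e(S,S)=\vol{S}-e(S,\overline{S})$ and $\vol{V}-\vol{S}=\vol{\overline{S}}$. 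Solving this identity for the quantity of interest yields the exact expression
\[
e(S,\overline{S})-\frac{\vol{S}\,\vol{\overline{S}}}{\vol{V}}=-\frac{\eta_1\eta_2\,|S|\,|\overline{S}|}{\vol{V}}.
\]

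It remains to bound $|\eta_1\eta_2|$. Since $A$ is the adjacency matrix of a connected graph, $\lambda_1$ is its spectral radius, so $|\lambda_n|\le\lambda_1$; interlacing then gives $\lambda_1\ge\eta_1\ge\lambda_{n-1}\ge\lambda_n\ge-\lambda_1$, whence $|\eta_1|\le\lambda_1$. For the second eigenvalue, $\lambda_n\le\eta_2\le\lambda_2$ forces $|\eta_2|\le\max\{|\lambda_2|,|\lambda_n|\}=\lambda$. Multiplying gives $|\eta_1\eta_2|\le\lambda_1\lambda$, and substituting into the boxed identity above produces exactly $\bigl|e(S,\overline{S})-\vol{S}\vol{\overline{S}}/\vol{V}\bigr|\le \lambda\,\lambda_1|S|\,|\overline{S}|/\vol{V}$, as claimed. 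The case $S=\emptyset$ or $S=V$ is trivial since both sides vanish.

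I expect the main obstacle to be conceptual rather than computational: the obvious ``one-shot'' route—writing the deviation as $\vecchi_S^{\top}\bigl(A-\tfrac{1}{\vol{V}}A\j\j^{\top}A\bigr)\vecchi_{\overline{S}}=-x^{\top}\bigl(A-\tfrac{1}{\vol{V}}A\j\j^{\top}A\bigr)x$ with $x=\vecchi_S-\tfrac{|S|}{n}\j$ and then bounding it by $\lambda\|x\|^{2}$—breaks down for irregular graphs, because $x\perp\j$ does \emph{not} imply $x\perp\vecnu$, so the Rayleigh quotient $x^{\top}Ax/\|x\|^{2}$ is not controlled by $\lambda$. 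Interlacing is precisely what sidesteps this: the larger quotient eigenvalue $\eta_1$ harmlessly absorbs the Perron/degree contribution (it is only bounded by $\lambda_1$), while $\eta_2$ is trapped in $[\lambda_n,\lambda_2]$ and hence bounded by $\lambda$, and it is the product of the two that the determinant isolates alongside $e(S,\overline{S})$. The only points needing care are verifying the determinant identity under the paper's double-counting convention for $e(S,S)$ and justifying the bound $|\eta_1|\le\lambda_1$ via the Perron property of $\lambda_1$.
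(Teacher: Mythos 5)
Your proof is correct and takes essentially the approach the paper attributes to Haemers: eigenvalue interlacing on the two-dimensional subspace of the partition $\{S,\overline{S}\}$ (your basis $[\chi_S \mid \mathbf{1}]$ spans the same subspace as $[\chi_S \mid \chi_{\overline{S}}]$, so your $\eta_1,\eta_2$ are exactly the eigenvalues of Haemers' quotient matrix), followed by the determinant identity and the bounds $|\eta_1|\le\lambda_1$, $|\eta_2|\le\lambda$. This is also the same interlacing-plus-determinant technique the paper itself employs in its proof of Theorem~\ref{th:weightedEML}.
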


\noindent For regular graphs, Theorem~\ref{th:irregHaemers} was used to prove the well-known Hoffman bound on the independence number, see~\cite[Theorem 2.1.4 (ii)]{haemersThesis}.

 A proof of the Expander Mixing Lemma by Chung was also given by Krivelevich and Sudakov in~\cite{KS2006}. The following Expander Mixing Lemma for irregular graphs can be derived analogously to their original proof. This result was also proved explicitly by Chung~\cite{chung2004}.

\begin{theorem}[Chung {\cite[Theorem 3]{chung2004}}]
Let~$G=(V,E)$ be a graph and~$S$,~$T$ subsets of its vertices. Let~$\sigma_i$ be the~$i^{\text{th}}$ largest normalized adjacency eigenvalue of~$G$ and~$\sigma = \max\{\abs{\sigma_2},\abs{\sigma_n}\}$. Define $\vol{S} = \sum_{V\in S}d_v$ for any $S\subseteq V$. Then 
\[\abs*{e(S,T) - \frac{\vol{S}\vol{T}}{\vol{V}}} \le \sigma\sqrt{\vol{S}\vol{T}\left(1-\frac{\vol{S}}{\vol{V}}\right)\left(1-\frac{\vol{T}}{\vol{V}}\right)}.\]
\label{th:strongVolEML}
\end{theorem}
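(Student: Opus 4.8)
The plan is to prove Theorem~\ref{th:strongVolEML} via a spectral decomposition of the normalized adjacency matrix~$\mathcal{A} = D^{-1/2}AD^{-1/2}$, which is the standard route underlying the Krivelevich--Sudakov argument. First I would express the quantity~$e(S,T)$ as a bilinear form. Writing~$\vecchi_S$ and~$\vecchi_T$ for the characteristic vectors of~$S$ and~$T$, we have~$e(S,T) = \vecchi_S\tran A \vecchi_T$. To bring in the normalized matrix, I would substitute~$A = D^{1/2}\mathcal{A}D^{1/2}$ and set~$\vecx = D^{1/2}\vecchi_S$, $\vecy = D^{1/2}\vecchi_T$, so that~$e(S,T) = \vecx\tran \mathcal{A}\vecy$. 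Note that~$\norm{\vecx}{}^2 = \vol{S}$ and~$\norm{\vecy}{}^2 = \vol{T}$.

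The key structural fact is that~$\mathcal{A} = D^{-1/2}AD^{-1/2}$ has~$\sigma_1 = 1$ as its largest eigenvalue with normalized eigenvector~$\vecw = D^{1/2}\j / \norm{D^{1/2}\j}{}$, where~$\norm{D^{1/2}\j}{}^2 = \vol{V}$. I would decompose~$\vecx$ and~$\vecy$ into the component along~$\vecw$ and the orthogonal remainder: write~$\vecx = \alpha \vecw + \vecx'$ and~$\vecy = \beta \vecw + \vecy'$ with~$\vecx',\vecy' \perp \vecw$. A direct computation gives the coefficients~$\alpha = \vecx\tran\vecw = \vol{S}/\sqrt{\vol{V}}$ and~$\beta = \vol{T}/\sqrt{\vol{V}}$. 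Expanding the bilinear form using~$\mathcal{A}\vecw = \vecw$ yields
\[
e(S,T) = \vecx\tran\mathcal{A}\vecy = \alpha\beta + \vecx'^{\top}\mathcal{A}\vecy' = \frac{\vol{S}\vol{T}}{\vol{V}} + \vecx'^{\top}\mathcal{A}\vecy',
\]
so the error term to bound is exactly~$\vecx'^{\top}\mathcal{A}\vecy'$.

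The next step is to bound~$\abs{\vecx'^{\top}\mathcal{A}\vecy'}$. Since~$\vecx',\vecy'$ lie in the orthogonal complement of the top eigenvector, the operator norm of~$\mathcal{A}$ restricted to that subspace is~$\sigma = \max\{\abs{\sigma_2},\abs{\sigma_n}\}$. By Cauchy--Schwarz and the spectral bound, $\abs{\vecx'^{\top}\mathcal{A}\vecy'} \le \sigma \norm{\vecx'}{}\norm{\vecy'}{}$. Finally I would compute the residual norms via the Pythagorean relation: $\norm{\vecx'}{}^2 = \norm{\vecx}{}^2 - \alpha^2 = \vol{S} - \vol{S}^2/\vol{V} = \vol{S}(1 - \vol{S}/\vol{V})$, and similarly~$\norm{\vecy'}{}^2 = \vol{T}(1 - \vol{T}/\vol{V})$. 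Substituting these into the Cauchy--Schwarz estimate gives exactly the claimed right-hand side.

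I expect the main obstacle to be purely bookkeeping rather than conceptual: one must correctly identify the top eigenvector of the normalized matrix as~$D^{1/2}\j$ (not~$\j$ itself, as in the regular case) and verify that its normalization introduces the factor~$\vol{V}$ in the right places. A subtle point requiring care is the convention that edges inside~$S\cap T$ are counted twice in~$e(S,T)$; this is precisely what makes~$e(S,T) = \vecchi_S\tran A\vecchi_T$ hold without correction terms, so I would flag this at the outset. The spectral gap argument itself is routine once the decomposition is set up, since~$\sigma$ is by definition the largest magnitude among the non-Perron normalized eigenvalues.
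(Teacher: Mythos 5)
Your proof is correct and matches the paper's approach in substance: the paper obtains Theorem~\ref{th:strongVolEML} by specializing Theorem~\ref{th:superEML} with $\vecx=\vecy=\j$ and $B=A$ (so $R=C=D$), and the proof of that general theorem is precisely your argument --- decompose $D^{1/2}\vecchi_S$ and $D^{1/2}\vecchi_T$ along the top (Perron) direction $D^{1/2}\j/\sqrt{\vol{V}}$ of $D^{-1/2}AD^{-1/2}$, then apply Cauchy--Schwarz with the spectral bound $\sigma$ on the orthogonal complement. The only cosmetic difference is that the paper phrases this via singular values of a general nonnegative matrix, whereas you use the eigenvalue decomposition of the symmetric normalized adjacency matrix, which coincides in this case.
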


An analogous result to Theorem~\ref{th:strongVolEML} was proved for matrix-weighted graphs in~\cite{hansen2021}.

\begin{remark}
    Note that, in general, Theorem~\ref{th:irregHaemers} and Theorem~\ref{th:strongVolEML} are incomparable, although computational results for small graphs suggest that Theorem~\ref{th:strongVolEML} is usually stronger. For example, if~$G$ is the complete graph~$K_5$ with one edge~$\{u,v\}$ removed, the right-hand side of Theorem~\ref{th:strongVolEML} is smaller for~$S=\{u\}$, but larger if~$S = \{w\}$ for any $w\in V\setminus\{u,w\}$.
\end{remark}

The authors in~\cite{KS2006} remark that their result also extends to irregular graphs and give an outline of the steps required. The proof details were worked out recently by Byrne and Tait~\cite{BT2023}.

\begin{lemma}[Krivelevich, Sudakov {\cite{KS2006}}, Byrne, Tait~\cite{BT2023}] 
    Let~$G=(V,E)$ be a graph with minimum and maximum degrees~$\delta$ and~$\Delta$, average degree~$\overline{d}$ and adjacency eigenvalues~$\lambda_1\ge \dots \ge \lambda_n$. Let~$\lambda = \max\{\abs{\lambda_2},\abs{\lambda_n}\}$ and~$R>0$ such that~$\Delta-\delta\le R\lambda$. If~$\lambda \ll \overline{d}$, then for any~$S,T\subseteq V$,
    \[\abs*{e(S,T) - \frac{\overline{d}\abs{S}\abs{T}}{n}} \le \overline{\lambda}\sqrt{\abs{S}\abs{T}},\]
    where $\overline{\lambda} = (10R + 1)\lambda$.
    \label{lem:taitEML}
\end{lemma}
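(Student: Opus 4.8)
The plan is to isolate the contribution of the uniform vector~$\j$ and treat everything else as an error controlled by~$\lambda$ and the near-regularity of~$G$. First I would split each characteristic vector into its average and its deviation: write $\vecchi_S = \frac{\abs{S}}{n}\j + \vecchi_S'$ and $\vecchi_T = \frac{\abs{T}}{n}\j + \vecchi_T'$, where $\vecchi_S' = \vecchi_S - \frac{\abs{S}}{n}\j \perp \j$ and analogously for~$T$; note that $\norm{\vecchi_S'}{}^2 = \abs{S}(1-\abs{S}/n)\le \abs{S}$. Expanding $e(S,T) = \vecchi_S^{\top} A\vecchi_T$ and using $\j^{\top} A\j = \sum_v d_v = n\overline{d}$, the average--average term produces exactly $\frac{\overline{d}\abs{S}\abs{T}}{n}$. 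Introducing the degree-deviation vector $\vecz \define A\j-\overline{d}\,\j$ (so $z_v = d_v-\overline{d}$ and $\vecz\perp\j$) and writing $A\j = \vecz+\overline{d}\,\j$ to collapse the two mixed terms (using $\vecchi_S',\vecchi_T'\perp\j$), I obtain
\begin{equation*}
e(S,T) - \frac{\overline{d}\abs{S}\abs{T}}{n} = \frac{\abs{S}}{n}\,\vecz^{\top}\vecchi_T' + \frac{\abs{T}}{n}\,\vecz^{\top}\vecchi_S' + (\vecchi_S')^{\top} A\,\vecchi_T'.
\end{equation*}

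For the two degree-deviation terms, every entry of~$\vecz$ satisfies $\abs{d_v-\overline{d}}\le \Delta-\delta\le R\lambda$, so $\norm{\vecz}{}\le \sqrt{n}\,(\Delta-\delta)\le \sqrt{n}\,R\lambda$. Cauchy--Schwarz then gives $\frac{\abs{S}}{n}\abs{\vecz^{\top}\vecchi_T'}\le \frac{\abs{S}}{n}\sqrt{n}\,R\lambda\sqrt{\abs{T}}\le R\lambda\sqrt{\abs{S}\abs{T}}$, since $\abs{S}/\sqrt{n}\le\sqrt{\abs{S}}$, and symmetrically for the $T$-term; together they contribute at most $2R\lambda\sqrt{\abs{S}\abs{T}}$.

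The quadratic form $(\vecchi_S')^{\top} A\,\vecchi_T'$ is the \emph{crux}. I would split off the Perron direction, $(\vecchi_S')^{\top} A\,\vecchi_T' = \lambda_1(\vecnu^{\top}\vecchi_S')(\vecnu^{\top}\vecchi_T') + (\vecchi_S')^{\top}(A-\lambda_1\vecnu\vecnu^{\top})\vecchi_T'$. The second summand is at most $\lambda\norm{\vecchi_S'}{}\norm{\vecchi_T'}{}\le \lambda\sqrt{\abs{S}\abs{T}}$, because $A-\lambda_1\vecnu\vecnu^{\top}$ has spectral norm~$\lambda$; this is the ``$+1$'' in $\overline{\lambda}$. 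For the first summand, write $\vecnu=\alpha\,\tfrac{1}{\sqrt{n}}\j+\vecw$ with $\vecw\perp\j$ and $\alpha=\tfrac{1}{\sqrt{n}}\j^{\top}\vecnu$; orthogonality of $\vecchi_S'$ to~$\j$ gives $\vecnu^{\top}\vecchi_S'=\vecw^{\top}\vecchi_S'$, so this term is bounded by $\lambda_1\norm{\vecw}{}^2\sqrt{\abs{S}\abs{T}}$.

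Everything thus reduces to showing that the Perron eigenvector is close to uniform, quantified by $\lambda_1\norm{\vecw}{}^2=O(R\lambda)$, and this is the step I expect to be the main obstacle, since it is where near-regularity and $\lambda\ll\overline{d}$ genuinely enter. Expanding $\tfrac{1}{\sqrt{n}}\j=\sum_i c_i\vecu_i$ in an orthonormal eigenbasis $\vecu_1=\vecnu,\dots,\vecu_n$ with $c_1=\alpha$ and $\sum_i c_i^2=1$, the Rayleigh-quotient identity reads $\overline{d}=\tfrac{1}{n}\j^{\top} A\j=\sum_i\lambda_i c_i^2\le \lambda_1\alpha^2+\lambda(1-\alpha^2)$, using $\abs{\lambda_i}\le\lambda$ for $i\ge 2$. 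Rearranging yields $\norm{\vecw}{}^2=1-\alpha^2\le \frac{\lambda_1-\overline{d}}{\lambda_1-\lambda}$. Since $\lambda_1\ge\overline{d}$ (Rayleigh quotient at~$\j$) and $\lambda_1\le\Delta$, we get $\lambda_1-\overline{d}\le\Delta-\delta\le R\lambda$, while $\lambda\ll\overline{d}\le\lambda_1$ makes $\frac{\lambda_1}{\lambda_1-\lambda}$ bounded by an absolute constant; hence $\lambda_1\norm{\vecw}{}^2\le \frac{\lambda_1}{\lambda_1-\lambda}R\lambda=O(R\lambda)$. Collecting the three contributions gives a bound of the form $(cR+1)\lambda\sqrt{\abs{S}\abs{T}}$ with an absolute constant~$c$; tracking the constants loosely, as in the cited argument, yields $c=10$, i.e.\ $\overline{\lambda}=(10R+1)\lambda$. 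The remaining steps are routine applications of Cauchy--Schwarz and the spectral decomposition.
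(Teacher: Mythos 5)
Your proof is correct. Note that the paper itself does not prove Lemma~\ref{lem:taitEML}; it quotes the result, whose proof was sketched by Krivelevich and Sudakov~\cite{KS2006} and written out by Byrne and Tait~\cite{BT2023}. That argument works at the level of operator norms: it bounds $\|A-\frac{\overline{d}}{n}J\|$ via the triangle inequality through $\lambda_1\vecnu\vecnu^{\top}$, i.e.\ $\|A-\frac{\overline{d}}{n}J\|\le\|A-\lambda_1\vecnu\vecnu^{\top}\|+\|\lambda_1\vecnu\vecnu^{\top}-\frac{\overline{d}}{n}J\|$, with the second term controlled by showing $\vecnu$ is close to $\j/\sqrt{n}$; the lemma then follows for all $S,T$ by Cauchy--Schwarz. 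You instead decompose the characteristic vectors into their components along and orthogonal to $\j$, bound the two degree-deviation cross terms by $2R\lambda\sqrt{\abs{S}\abs{T}}$ using $\abs{d_v-\overline{d}}\le\Delta-\delta\le R\lambda$, and split the remaining quadratic form into the Perron direction plus a part of spectral norm $\lambda$. The heart of both arguments is the same Rayleigh-quotient estimate: your inequality $1-\alpha^2\le(\lambda_1-\overline{d})/(\lambda_1-\lambda)\le R\lambda/(\lambda_1-\lambda)$, combined with $\lambda\ll\overline{d}\le\lambda_1$ to make $\lambda_1/(\lambda_1-\lambda)$ bounded by an absolute constant, is exactly how the near-uniformity of $\vecnu$ enters in the cited proof as well. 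One remark on your bookkeeping: your three contributions sum to roughly $(2R+2R+1)\lambda=(4R+1)\lambda$ once $\lambda\le\lambda_1/2$, so your closing appeal to ``loose tracking yields $c=10$'' undersells what you proved; you obtain the conclusion with a better constant, and hence a fortiori with $\overline{\lambda}=(10R+1)\lambda$. A minor stylistic benefit of your vector-decomposition route is that it makes visible which hypothesis controls which error term (near-regularity for the cross terms, the spectral gap for the quadratic term), whereas the operator-norm route is shorter but hides this accounting inside a single norm bound.
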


A more general approach to Expander Mixing Lemmas was taken by Butler \cite{Butler2006}, who proved a version for nonnegative matrices, as well as a converse (see Section~\ref{sec:converse}). This result was generalized to all matrices by Nikiforov~\cite{nikiforov2009}.

\begin{theorem}[Butler {\cite[Theorem 1]{Butler2006}}]
   Let~$B \in \mathbb{R}^{m\times n}$ be a matrix with nonnegative entries and no zero rows/columns. Also, let~$R\in \mathbb{R}^{m\times m}$ and~$C \in \mathbb{R}^{n\times n}$ be the unique diagonal matrices such that $B\j = R\j$ and~$\j^\top B = \j^\top C$. Then for all~$S \subseteq [m]$ and~$T \subseteq [n]$,
   \[\abs*{\langle \vecchi_S,B\vecchi_T \rangle - \frac{\langle \vecchi_S,B\j \rangle\langle\j,B\vecchi_T \rangle}{\langle \j,B\j \rangle}} \le \sigma_2(R^{-1/2}BC^{-1/2})\sqrt{\langle \vecchi_S,B\j \rangle\langle\j,B\vecchi_T \rangle},\]
   where~$\sigma_2(R^{-1/2}BC^{-1/2})$ denotes the second largest singular value of~$R^{-1/2}BC^{-1/2}$.
   \label{th:nonsquareEML}
\end{theorem}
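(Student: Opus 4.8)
The plan is to reduce Theorem~\ref{th:nonsquareEML} to a spectral statement about the normalized matrix $M \coloneqq R^{-1/2}BC^{-1/2}$, whose singular values control the quantity we wish to bound. First I would record the defining relations of $R$ and $C$: since $B\j = R\j$, the row sums of $B$ are the diagonal entries of $R$, and since $\j^\top B = \j^\top C$, the column sums of $B$ are the diagonal entries of $C$. Crucially, $\langle \j, B\j\rangle = \tr R = \tr C$, and I would verify that $M$ has a left singular vector $\vecu = \frac{1}{\sqrt{\langle \j,B\j\rangle}}R^{1/2}\j$ and a right singular vector $\vecv = \frac{1}{\sqrt{\langle \j,B\j\rangle}}C^{1/2}\j$, both unit vectors, with top singular value $\sigma_1(M)=1$. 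Indeed $M\vecv = \frac{1}{\sqrt{\langle \j,B\j\rangle}}R^{-1/2}B\j = \frac{1}{\sqrt{\langle \j,B\j\rangle}}R^{1/2}\j = \vecu$, and symmetrically $M^\top \vecu = \vecv$, so $(\sigma_1,\vecu,\vecv)=(1,\vecu,\vecv)$ is the leading singular triple.

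The key step is to rewrite the error term using $M$. I would set $\vecx \coloneqq R^{1/2}\vecchi_S$ and $\vecy \coloneqq C^{1/2}\vecchi_T$, so that
\[
\langle \vecchi_S, B\vecchi_T\rangle = \langle R^{-1/2}\vecx, B C^{-1/2}\vecy\rangle = \langle \vecx, M\vecy\rangle,
\]
while $\langle \vecchi_S,B\j\rangle = \langle \vecx, R^{1/2}\j\rangle = \sqrt{\langle \j,B\j\rangle}\,\langle \vecx,\vecu\rangle$ and likewise $\langle \j,B\vecchi_T\rangle = \sqrt{\langle \j,B\j\rangle}\,\langle \vecy,\vecv\rangle$. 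Substituting these, the main quantity becomes
\[
\Big|\langle \vecx, M\vecy\rangle - \langle \vecx,\vecu\rangle\langle \vecy,\vecv\rangle\Big| = \Big|\big\langle \vecx,\, (M - \vecu\vecv^\top)\,\vecy\big\rangle\Big|,
\]
since the subtracted term is exactly the rank-one projection onto the top singular triple. Thus I have isolated the contribution of the deflated operator $M' \coloneqq M - \sigma_1 \vecu\vecv^\top$.

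The remaining step is a norm estimate: by the Cauchy--Schwarz inequality the expression is at most $\norm{\vecx}{}\,\norm{M'\vecy}{} \le \norm{M'}{}\,\norm{\vecx}{}\,\norm{\vecy}{}$, where $\norm{M'}{}$ is the spectral (operator) norm. The point is that deflating the leading singular triple leaves an operator whose spectral norm equals the \emph{second} largest singular value of $M$; this is the standard consequence of the singular value decomposition, and I would justify it by expanding $M$ in its singular triples and observing that subtracting $\vecu\vecv^\top$ removes the top term. Finally I translate back: $\norm{\vecx}{}^2 = \langle \vecchi_S, R\vecchi_S\rangle = \langle \vecchi_S, B\j\rangle$ (using $R\vecchi_S$ picks out the row sums indexed by $S$, equivalently $R\j$ restricted to $S$) and $\norm{\vecy}{}^2 = \langle \j, B\vecchi_T\rangle$, which produces exactly the factor $\sqrt{\langle \vecchi_S,B\j\rangle\langle \j,B\vecchi_T\rangle}$ on the right-hand side. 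The main obstacle is the bookkeeping in confirming that the rank-one term matches the product of the two inner products precisely, and in correctly identifying $\norm{\vecx}{}^2$ and $\norm{\vecy}{}^2$ with the weighted sums $\langle \vecchi_S,B\j\rangle$ and $\langle \j,B\vecchi_T\rangle$; once the singular triple $(1,\vecu,\vecv)$ is pinned down, the rest is a direct Cauchy--Schwarz estimate.
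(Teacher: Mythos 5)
Your strategy is in essence the paper's own: the paper proves the more general Theorem~\ref{th:superEML} (with arbitrary positive vectors $\vecx,\vecy$ in place of $\j$) by expanding $R^{1/2}\vecchi_S$ and $C^{1/2}\vecchi_T$ in singular-vector bases of $M=R^{-1/2}BC^{-1/2}$, identifying the top singular triple, and applying Cauchy--Schwarz to the tail; Butler's theorem is the specialization $\vecx=\vecy=\j$. Your deflation formulation $M'=M-\vecu\vecv^\top$ is a repackaging of the same computation, and your bookkeeping ($\|\vecx\|^2=\langle\vecchi_S,B\j\rangle$, $\|\vecy\|^2=\langle\j,B\vecchi_T\rangle$, and the rank-one term matching $\langle\vecchi_S,B\j\rangle\langle\j,B\vecchi_T\rangle/\langle\j,B\j\rangle$) is correct. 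You do give away the subtracted squares $\langle\vecchi_S,B\j\rangle^2/\langle\j,B\j\rangle$ that the paper's corollary keeps, but that is harmless here since the target statement is the weaker one.

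There is, however, one genuine gap: after verifying $M\vecv=\vecu$ and $M^\top\vecu=\vecv$, you assert that $(1,\vecu,\vecv)$ is the \emph{leading} singular triple. That computation only shows that $1$ is \emph{a} singular value of $M$ with this singular vector pair; it does not show $\sigma_1(M)=1$. The distinction is essential to your argument: if $M$ had a singular value exceeding $1$, the deflated operator would satisfy $\|M-\vecu\vecv^\top\|=\sigma_1(M)>\sigma_2(M)$, and your final estimate would not be the claimed one. Closing this gap is precisely where the nonnegativity of $B$ must enter --- as written, your proof never uses it. The paper handles this via Perron--Frobenius: $\vecv$ is a strictly positive eigenvector of the nonnegative matrix $M^\top M$ with eigenvalue $1$, and for a nonnegative matrix an eigenvalue admitting a strictly positive eigenvector equals the spectral radius, so $\sigma_1(M)=1$. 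Alternatively, one can show $\|M\|\le 1$ directly: writing $r_i$, $c_j$ for the diagonal entries of $R$ and $C$, for unit vectors $\vecp,\vecq$ one has
\[
\abs*{\vecp^\top M\vecq}\le\sum_{i,j}\frac{b_{ij}}{\sqrt{r_ic_j}}\abs{p_i}\abs{q_j}\le\left(\sum_{i,j}\frac{b_{ij}}{r_i}\,p_i^2\right)^{1/2}\left(\sum_{i,j}\frac{b_{ij}}{c_j}\,q_j^2\right)^{1/2}=1,
\]
by Cauchy--Schwarz with respect to the nonnegative weights $b_{ij}$. With either justification inserted, your proof is complete and follows the same route as the paper.
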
 

Applying Theorem~\ref{th:nonsquareEML} to the adjacency matrix of a graph results in the following, slightly weaker, version of Theorem~\ref{th:strongVolEML}. Since the theorem applies to general nonnegative matrices, an analogous statement can also be derived for directed graphs~\cite{Butler2006}.

\begin{corollary}
    Let~$G=(V,E)$ be a graph. Let~$\sigma_i$ be the~$i^{\text{th}}$ largest normalized adjacency eigenvalue of~$G$ and~$\sigma = \max\{\abs{\sigma_2},\abs{\sigma_n}\}$. Let $\vol{S} = \sum_{V\in S}d_v$ for any set $S\subseteq V$. For every two subsets of vertices~$S,T\subseteq V$, we have 
    \[\abs*{e(S,T) - \frac{\vol{S}\vol{T}}{\vol{V}}}\le \sigma \sqrt{\vol{S}\vol{T}}.\]
    \label{cor:weakVolEML}
\end{corollary}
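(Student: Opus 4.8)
The plan is to apply Theorem~\ref{th:nonsquareEML} directly to the adjacency matrix $B = A$ of $G$, which is a symmetric nonnegative matrix with no zero rows or columns (as $G$ is connected, it has no isolated vertices). The first step is to identify the diagonal matrices $R$ and $C$. Since the row sums of $A$ are the vertex degrees, we have $A\j = D\j$, so $R = D$; by symmetry $\j^\top A = \j^\top D$, giving $C = D$ as well. Consequently $R^{-1/2}BC^{-1/2} = D^{-1/2}AD^{-1/2}$ is precisely the normalized adjacency matrix, whose eigenvalues are $1 = \sigma_1 \ge \cdots \ge \sigma_n \ge -1$ as recalled in Section~\ref{sec:prelim}.

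The key step is to evaluate $\sigma_2(D^{-1/2}AD^{-1/2})$, the second largest singular value appearing on the right-hand side. Since $D^{-1/2}AD^{-1/2}$ is symmetric, its singular values are the absolute values $\abs{\sigma_i}$ of its eigenvalues. The largest of these is $\abs{\sigma_1} = 1$; because $G$ is connected, $\sigma_1$ is simple, so $\sigma_2 < 1$. As the eigenvalues are ordered decreasingly, the largest absolute value among $\sigma_2,\dots,\sigma_n$ is attained either at $\sigma_2$ or at $\sigma_n$, hence equals $\max\{\abs{\sigma_2},\abs{\sigma_n}\} = \sigma$. In the non-bipartite case $1$ occurs once as a singular value and the second largest singular value is this $\sigma$; in the bipartite case $\sigma_n = -1$ contributes a second singular value equal to $1$, but then $\sigma = \max\{\abs{\sigma_2},1\} = 1$ as well, so the identification $\sigma_2(D^{-1/2}AD^{-1/2}) = \sigma$ holds either way.

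Finally I would translate the four inner products in Theorem~\ref{th:nonsquareEML} into graph quantities: $\langle \vecchi_S, A\vecchi_T\rangle = e(S,T)$, while $\langle \vecchi_S, A\j\rangle = \sum_{v\in S} d_v = \vol{S}$, $\langle \j, A\vecchi_T\rangle = \vol{T}$, and $\langle \j, A\j\rangle = \sum_{v} d_v = \vol{V}$. Substituting these identities together with $\sigma_2(D^{-1/2}AD^{-1/2}) = \sigma$ into the conclusion of Theorem~\ref{th:nonsquareEML} yields exactly the claimed inequality.

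I expect the only genuine obstacle to be the bookkeeping around the second largest singular value, namely confirming that it coincides with $\sigma = \max\{\abs{\sigma_2},\abs{\sigma_n}\}$ rather than some other $\abs{\sigma_i}$, and correctly treating the bipartite edge case where $1$ appears as a singular value with multiplicity two. Everything else is a routine matter of matching the bilinear forms to the combinatorial quantities $e(\cdot,\cdot)$ and $\vol{\cdot}$.
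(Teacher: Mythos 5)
Your proposal is correct and follows exactly the paper's route: the paper derives Corollary~\ref{cor:weakVolEML} precisely by applying Theorem~\ref{th:nonsquareEML} to $B=A$, where $R=C=D$, so that $R^{-1/2}BC^{-1/2}=D^{-1/2}AD^{-1/2}$ and the bilinear forms become $e(S,T)$, $\vol{S}$, $\vol{T}$, $\vol{V}$. Your careful identification of the second largest singular value with $\sigma=\max\{\abs{\sigma_2},\abs{\sigma_n}\}$, including the bipartite case, fills in details the paper leaves implicit but introduces nothing different in substance.
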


For the Laplacian matrix, an Expander Mixing Lemma for irregular graphs was proved by Chung (see~\cite[Theorem 7]{chung2004}). This result was already known for disjoint sets~$S,T$ such that~$e(S,T)=0$ due to Haemers~\cite[Lemma 6.1]{H1995}.

\begin{theorem}[Chung {\cite[Theorem 7]{chung2004}}]
Let~$G=(V,E)$ be a graph with Laplacian eigenvalues~$\mu_0\le \dots\le \mu_{n-1}$, and let~$S,T\subseteq V$. Define $\vol{S} = \sum_{V\in S}d_v$ for any $S\subseteq V$. Then 
\begin{align*}&\abs*{e(S,T) + \frac{(\mu_1+\mu_{n-1})}{2} \left(\abs{S\cap T} - \frac{|S||T|}{n} \right) -\vol{S\cap T}} \\
&\ \ \ \ \le \frac{\mu_1-\mu_{n-1}}{2n} \sqrt{|S||T|\left(n-|S|\right)\left(n-|T|\right)}.
\end{align*}
\label{th:LaplacianEML}
\end{theorem}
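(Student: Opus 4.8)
The plan is to rewrite the entire quantity inside the absolute value as a single bilinear form $\langle \vecchi_S, M\vecchi_T\rangle$ in a suitably shifted Laplacian $M$, and then to bound this form using the operator norm of $M$ together with a projection onto the subspace orthogonal to $\j$.

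First I would record two identities. Since $L = D - A$, we have $\langle \vecchi_S, L \vecchi_T\rangle = \langle \vecchi_S, D\vecchi_T\rangle - \langle \vecchi_S, A \vecchi_T\rangle = \vol{S\cap T} - e(S,T)$, using $\langle \vecchi_S, D\vecchi_T\rangle = \sum_{v\in S\cap T} d_v$ and the convention that edges inside $S\cap T$ are counted twice in $e(S,T)$. Moreover $|S\cap T| = \langle \vecchi_S, \vecchi_T\rangle$ and $|S||T|/n = \langle \vecchi_S, \tfrac{1}{n}J\vecchi_T\rangle$, so that $|S\cap T| - \frac{|S||T|}{n} = \langle \vecchi_S, (I - \tfrac{1}{n}J)\vecchi_T\rangle$. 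Combining these, the expression inside the absolute value equals $\langle \vecchi_S, M\vecchi_T\rangle$, where
\[M \define \frac{\mu_1+\mu_{n-1}}{2}\left(I - \tfrac{1}{n}J\right) - L.\]

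Next I would analyze the spectrum of $M$. Both $L$ and $I - \tfrac{1}{n}J$ are symmetric and annihilate $\j$ (for a connected graph $L\j = 0$ and $(I - \tfrac{1}{n}J)\j = 0$), so $M$ is symmetric and $M\j = 0$. On the orthogonal complement $\j^\perp$ the projection $I - \tfrac{1}{n}J$ acts as the identity, so $M$ restricted to $\j^\perp$ equals $\tfrac{\mu_1+\mu_{n-1}}{2}I - L$, whose eigenvalues are $\tfrac{\mu_1+\mu_{n-1}}{2} - \mu_i$ for $i = 1, \dots, n-1$. These are symmetric about $0$ and lie in $[-\tfrac{\mu_{n-1}-\mu_1}{2}, \tfrac{\mu_{n-1}-\mu_1}{2}]$, so the operator norm of $M$ on $\j^\perp$ is at most $\tfrac{\mu_{n-1}-\mu_1}{2}$. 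Because $M\j = 0 = \j^\top M$, I can then replace $\vecchi_S, \vecchi_T$ by their projections $P\vecchi_S, P\vecchi_T$ onto $\j^\perp$, with $P = I - \tfrac{1}{n}J$, without changing the bilinear form. A direct computation gives $\norm{P\vecchi_S}{}^2 = |S| - \frac{|S|^2}{n} = \frac{|S|(n-|S|)}{n}$ and likewise $\norm{P\vecchi_T}{}^2 = \frac{|T|(n-|T|)}{n}$, so that Cauchy--Schwarz together with the norm bound yields
\[\abs*{\langle \vecchi_S, M\vecchi_T\rangle} \le \frac{\mu_{n-1}-\mu_1}{2}\norm{P\vecchi_S}{}\norm{P\vecchi_T}{} = \frac{\mu_{n-1}-\mu_1}{2n}\sqrt{|S||T|(n-|S|)(n-|T|)},\]
which is the claimed inequality (up to the harmless sign of $\mu_1 - \mu_{n-1}$ inside the absolute value on the right).

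There is no serious obstacle here: the only genuine idea is choosing the correctly centered matrix $M$, whose shift by $\tfrac{\mu_1+\mu_{n-1}}{2}$ symmetrizes the relevant eigenvalues about zero so their magnitudes are uniformly controlled by the spectral spread $\tfrac{\mu_{n-1}-\mu_1}{2}$. The sharpening that produces the factors $(n-|S|)$ and $(n-|T|)$ rather than a plain $\sqrt{|S||T|}$ comes entirely from projecting the characteristic vectors onto $\j^\perp$, which is legitimate precisely because $M$ kills the $\j$-direction. The one point to keep straight is the bookkeeping in the first identity, namely the double-counting convention for edges within $S\cap T$ and the matching $\vol{S\cap T}$ term.
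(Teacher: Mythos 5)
The paper itself offers no proof of this theorem: it is quoted from Chung~\cite{chung2004} as one of the existing results surveyed in Section~\ref{sec:existing}, so your argument can only be judged on its own terms, and on those terms it is correct. The reduction of the left-hand side to $\langle\vecchi_S,M\vecchi_T\rangle$ with $M=\tfrac{\mu_1+\mu_{n-1}}{2}\left(I-\tfrac{1}{n}J\right)-L$ is right under the paper's conventions ($\langle\vecchi_S,A\vecchi_T\rangle=e(S,T)$ because edges inside $S\cap T$ are counted twice, and $\langle\vecchi_S,D\vecchi_T\rangle=\vol{S\cap T}$); $M$ annihilates $\j$, acts as $\tfrac{\mu_1+\mu_{n-1}}{2}I-L$ on $\j^{\perp}$, and so has operator norm at most $\tfrac{\mu_{n-1}-\mu_1}{2}$ there; and replacing $\vecchi_S,\vecchi_T$ by their projections onto $\j^{\perp}$ before applying Cauchy--Schwarz is exactly what yields the factors $(n-|S|)(n-|T|)$. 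This centering-and-projection argument is essentially the standard (Chung-style) proof. Three small points of bookkeeping. First, the eigenvalues $\tfrac{\mu_1+\mu_{n-1}}{2}-\mu_i$ need not be symmetric about $0$ as you assert; they are merely contained in the symmetric interval $\left[-\tfrac{\mu_{n-1}-\mu_1}{2},\tfrac{\mu_{n-1}-\mu_1}{2}\right]$, which is all your argument actually uses. Second, $L\j=0$ holds for every graph, not only connected ones; where one must be slightly careful is in asserting that $L$ restricted to $\j^{\perp}$ has spectrum $\{\mu_1,\dots,\mu_{n-1}\}$, which holds because $\j$ is always a $0$-eigenvector and one may complete it to an orthonormal eigenbasis (and is unambiguous here since the paper assumes $G$ connected throughout). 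Third, the constant $\tfrac{\mu_1-\mu_{n-1}}{2n}$ in the statement as printed is nonpositive under the paper's ordering $\mu_1\le\mu_{n-1}$, so it must be read as $\tfrac{\mu_{n-1}-\mu_1}{2n}$; this is a typo in the quoted statement, and your proof produces precisely the corrected constant.
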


Theorem~\ref{th:LaplacianEML} was used to find a connection between several graph parameters and Laplacian eigenvalues, such as toughness~\cite{gu2022}, integrity and tenacity~\cite{chen2022}.

\section{New Expander Mixing Lemmas for irregular graphs}\label{sec:expandermixingnew}

In this section, we provide our main result, an Expander Mixing Lemma for nonnegative matrices. This result can be seen as a unifying framework, which encompasses most known results from Section~\ref{sec:existing}. We use it to derive a new version of the Expander Mixing Lemma with Perron weights and generalize the~$k$-Expander Mixing Lemma from~\cite{ACT2016} to irregular graphs. We also extend an Expander Mixing Lemma for 1-designs by Haemers~\cite{haemersThesis,H1995} to hold for general bipartite graphs and explore its implications for non-bipartite graphs through the double cover.

\subsection{A general EML for nonnegative matrices}\label{subsec:matrixEML}

As we have seen in Section~\ref{sec:existing}, many versions of the Expander Mixing Lemma have been proposed in the literature. In this section, inspired by the results of Butler~\cite{Butler2006} and Nikiforov~\cite{nikiforov2009}, we provide a more general theorem to unify these statements. 

\begin{theorem}
    Let~$B\in \mathbb{R}^{m\times n}$ be a nonnegative matrix without zero rows or columns and let~$\vecx\in \mathbb{R}^n$ and~$\vecy\in \mathbb{R}^m$ be positive vectors. Let~$C$ and~$R$ be diagonal matrices such that~$B\vecx=R\vecy$ and~$\vecy^{\top}B = \vecx^{\top} C$. Then for all~$S\subseteq [m]$ and~$T\subseteq [n]$,
    \begin{align*}&\abs*{\langle\vecchi_S,B\vecchi_T\rangle - \frac{\langle \vecchi_S,B\vecx\rangle \langle \vecy,B\vecchi_T\rangle}{\langle \vecy,B\vecx\rangle}}\\
    &\le \sigma_2(R^{-1/2}BC^{-1/2})\sqrt{\left(\|R^{1/2}\vecchi_S\|^2-\frac{\langle \vecchi_S,B\vecx\rangle^2}{\langle \vecy,B\vecx\rangle} \right) \left( \|C^{1/2}\vecchi_T\|^2-\frac{\langle \vecy,B\vecchi_T\rangle^2}{\langle \vecy,B\vecx\rangle}\right)},
    \end{align*}
    where~$\sigma_2(R^{-1/2}BC^{-1/2})$ denotes the second largest singular value of~$R^{-1/2}BC^{-1/2}$.
    \label{th:superEML}
\end{theorem}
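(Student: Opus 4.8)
The plan is to reduce this weighted statement to a clean singular value estimate by an appropriate change of variables. Let me think about how the diagonal matrices $R$ and $C$ act. The condition $B\vecx = R\vecy$ means $R$ is diagonal with entries $r_i = (B\vecx)_i / y_i$, and $\vecy^\top B = \vecx^\top C$ means $C$ has entries $c_j = (\vecy^\top B)_j / x_j$. The first thing I would verify is a consistency identity: $\langle \vecy, B\vecx\rangle = \langle \vecy, R\vecy\rangle = \|R^{1/2}\vecy\|^2$ and equally $\langle \vecy, B\vecx\rangle = \langle \vecx, C\vecx\rangle = \|C^{1/2}\vecx\|^2$, so the normalizing scalar in the denominator is the squared norm of natural vectors. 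This suggests introducing the normalized matrix $M \coloneqq R^{-1/2} B C^{-1/2}$ and the unit vectors $\vecw \coloneqq R^{1/2}\vecy / \|R^{1/2}\vecy\|$ and $\vecz \coloneqq C^{1/2}\vecx / \|C^{1/2}\vecx\|$ (these share the same normalization by the consistency identity).

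Next I would check that $\vecw$ and $\vecz$ are the leading singular vectors of $M$ with singular value $1$. Indeed $M\vecz = R^{-1/2}BC^{-1/2}\cdot C^{1/2}\vecx / \|C^{1/2}\vecx\| = R^{-1/2} B\vecx / \|C^{1/2}\vecx\| = R^{-1/2} R\vecy / \|C^{1/2}\vecx\| = R^{1/2}\vecy/\|C^{1/2}\vecx\|$, which equals $\vecw$ after matching norms. A symmetric computation gives $M^\top \vecw = \vecz$, so $(\vecw, \vecz)$ is a singular pair with singular value exactly $1$, and (since $B$ is nonnegative with the Perron-type structure) it is the top one. The heart of the argument is then the singular value variational bound: writing $\vecchi_S$ and $\vecchi_T$ in terms of the normalized vectors, set $\vecu \coloneqq R^{1/2}\vecchi_S$ and $\vecv \coloneqq C^{1/2}\vecchi_T$, so that $\langle \vecchi_S, B\vecchi_T\rangle = \langle \vecu, M\vecv\rangle$. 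I would decompose $\vecu = \langle \vecu,\vecw\rangle \vecw + \vecu^\perp$ and $\vecv = \langle\vecv,\vecz\rangle\vecz + \vecv^\perp$, where the perpendicular parts are orthogonal to the top singular vectors. The cross term $\langle \vecu,\vecw\rangle\langle\vecz,\vecv\rangle$ reproduces exactly the product term $\langle\vecchi_S,B\vecx\rangle\langle\vecy,B\vecchi_T\rangle / \langle\vecy,B\vecx\rangle$, which I would verify by expanding the inner products.

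After subtracting the top-singular-value contribution, the residual is $\langle \vecu^\perp, M \vecv^\perp\rangle$ (the mixed terms involving one perpendicular and one parallel component vanish because $M\vecz = \vecw$ and $M^\top\vecw = \vecz$ force them to lie along the excluded directions). By Cauchy-Schwarz and the definition of the second singular value as $\sigma_2(M) = \max\{\langle \vecu', M\vecv'\rangle : \|\vecu'\| = \|\vecv'\| = 1, \vecu'\perp\vecw, \vecv'\perp\vecz\}$, I would bound this residual by $\sigma_2(M)\,\|\vecu^\perp\|\,\|\vecv^\perp\|$. Finally, $\|\vecu^\perp\|^2 = \|\vecu\|^2 - \langle\vecu,\vecw\rangle^2 = \|R^{1/2}\vecchi_S\|^2 - \langle\vecchi_S,B\vecx\rangle^2/\langle\vecy,B\vecx\rangle$, matching the first factor under the square root, and symmetrically for $\|\vecv^\perp\|^2$. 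The main obstacle I anticipate is the bookkeeping in translating between the four bilinear forms — confirming that $\langle\vecu,\vecw\rangle = \langle\vecchi_S,B\vecx\rangle/\sqrt{\langle\vecy,B\vecx\rangle}$ and its analogue hold exactly, since every normalization constant must cancel precisely for the product term and the two square-root factors to come out in the stated form; the singular-value inequality itself is standard once the orthogonal decomposition is set up correctly.
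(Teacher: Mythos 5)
Your proposal is correct and follows essentially the same route as the paper's proof: both pass to $M = R^{-1/2}BC^{-1/2}$, use the positivity of $\vecx,\vecy$ (via Perron--Frobenius applied to $M^{\top}M$) to identify the rescaled vectors as the top singular pair with singular value $1$, subtract that rank-one contribution, and bound the remainder by $\sigma_2(M)$ via Cauchy--Schwarz. The only cosmetic difference is that the paper expands $R^{1/2}\vecchi_S$ and $C^{1/2}\vecchi_T$ in a full singular-vector basis, whereas you project onto the top pair and invoke the variational characterization of $\sigma_2$ on the orthogonal complement — the same argument in different packaging.
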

\begin{proof}
    Let~$M = R^{-1/2}BC^{-1/2}$ and assume without loss of generality that~$m\le n$. Let~$\vecy_1,\dots, \vecy_m \in \mathbb{R}^m$ and~$\vecx_1, \dots, \vecx_n \in \mathbb{R}^n$ be orthonormal bases of left and right singular vectors of~$M$ respectively. Then we can rewrite
    \[R^{1/2}\vecchi_S = \sum_{i=1}^n\alpha_i \vecy_i, \ \ \ C^{1/2}\vecchi_T = \sum_{i=1}^m\beta_i \vecx_i,\]
    with~$\alpha_i = \langle R^{1/2}\vecchi_S, \vecy_i\rangle$ and~$\beta_i = \langle C^{1/2}\vecchi_T, \vecx_i\rangle$. It follows that
    \begin{align}
        \inn{\vecchi_S,B\vecchi_T} &= \inn{R^{1/2}\vecchi_S,MC^{1/2}\vecchi_T}\nonumber\\
        &= \left(\sum_{i=1}^n \alpha_i \vecy_i\right)^{\top}M \left(\sum_{i=1}^m \beta_i \vecx_i\right)\nonumber \\
        &= \left(\sum_{i=1}^n \alpha_i \vecy_i\right)^{\top} \left(\sum_{i=1}^m \sigma_i \beta_i \vecx_i\right)\label{eq:evDecomp}\\
        &= \sum_{i=1}^m \sigma_i(M) \alpha_i \beta_i\nonumber \\
        &= \sigma_1(M)\inn{R^{1/2}\vecchi_S,\vecy_1}\inn{C^{1/2}\vecchi_T,\vecx_1} + \sum_{i=2}^m \sigma_i(M) \alpha_i \beta_i,\nonumber
    \end{align}
   \noindent where~$\sigma_i(M)$ denotes the $i^{\text{th}}$ largest singular value of $M$.
   
    Let~$\vecx'=C^{1/2} \vecx / \sqrt{\inn{\vecy,B\vecx}}$ and $\vecy'=R^{1/2} \vecy / \sqrt{\inn{\vecy,B\vecx}}$. Note that~$M\vecx'= \vecy'$ and~$\vecy'^{\top}M = \vecx'^{\top}$. Moreover,~$\vecx'$ is an eigenvector of~$M^{\top}M$ associated with eigenvalue one. As~$\vecx'$ is positive by assumption, it must be the Perron eigenvector, hence~$\sigma_1 = 1$,~$\vecx_1=\vecx'$ and~$\vecy_1 = \vecy'$. Therefore,
    \[\alpha_1 = \inn{R^{1/2}\vecchi_S,\vecy_1} = \inn{R^{1/2}\vecchi_S,\vecy'} = \inn*{\vecchi_S,\frac{R \vecy}{\sqrt{\inn{\vecy,B\vecx}}}} 
    = \frac{\inn{\vecchi_s,B\vecx}}{\sqrt{\inn{\vecy,B\vecx}}},\]
    and likewise,~$\beta_1 = \inn{C^{1/2}\vecchi_T,\vecx_1} = \inn{\vecy,B\vecchi_T} / \sqrt{\inn{\vecy,B\vecx}}$. This means that
    \begin{align*}
        \abs*{\langle\vecchi_S,B\vecchi_T\rangle - \frac{\langle \vecchi_S,B\vecx\rangle \langle \vecy,B\vecchi_T\rangle}{\langle \vecy,B\vecx\rangle}} &= \abs*{\inn{\vecchi_S,B\vecchi_T} - \sigma_1(M)\inn{R^{1/2}\vecchi_S,\vecy_1}\inn{C^{1/2}\vecchi_T,\vecx_1}}\\
        & = \abs*{\sum_{i=2}^m \sigma_i(M) \alpha_i \beta_i}\\
        &\le \sigma_2(M) \left(\sum_{i=2}^n\alpha_i^2\right)^{1/2}\left(\sum_{i=2}^m\beta_i^2\right)^{1/2}.
    \end{align*}
    The result then follows from the fact that
    \begin{align*}
   & \sum_{i=2}^n\alpha_i^2 = \|R^{1/2}\vecchi_S\|^2 - \alpha_1^2, \ \ \ \sum_{i=2}^m\beta_i^2 = \|C^{1/2}\vecchi_T\|^2 - \beta_1^2.
   \qedhere \end{align*}
\end{proof}

Theorem~\ref{th:superEML} implies the following corollary. This is a stronger version of Theorem~\ref{th:nonsquareEML} by Butler~\cite{Butler2006}, as the right-hand side of the inequality is always smaller or equal to the right-hand side in Theorem~\ref{th:nonsquareEML}.
 
\begin{corollary}
    Let~$B \in \mathbb{R}^{m\times n}$ be a matrix with nonnegative entries and no zero rows/columns. Also, let~$R\in \mathbb{R}^{m\times m}$ and~$C \in \mathbb{R}^{n\times n}$ be the unique diagonal matrices such that $B\j = R\j$ and~$\j^\top B = \j^\top C$. Then for all~$S \subseteq [m]$ and~$T \subseteq [n]$,
   \begin{align*}&\abs*{\langle \vecchi_S,B\vecchi_T \rangle - \frac{\langle \vecchi_S,B\j \rangle\langle\j,B\vecchi_T \rangle}{\langle \j,B\j \rangle}} \\
   &\le \sigma_2(R^{-1/2}BC^{-1/2})\sqrt{ \left(\langle \vecchi_S,B\j \rangle-\frac{\langle \vecchi_S,B\j\rangle^2}{\langle \j,B\j\rangle} \right) \left( \langle \j,B\vecchi_T \rangle-\frac{\langle \j,B\vecchi_T\rangle^2}{\langle \j,B\j\rangle}\right) },
   \end{align*}
   where~$\sigma_2(R^{-1/2}BC^{-1/2})$ denotes the second largest singular value of~$R^{-1/2}BC^{-1/2}$.
\end{corollary}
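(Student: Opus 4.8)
The plan is to obtain this corollary as the direct specialization of Theorem~\ref{th:superEML} to $\vecx=\j$ and $\vecy=\j$. With this choice, the hypotheses $B\vecx=R\vecy$ and $\vecy^{\top}B=\vecx^{\top}C$ reduce precisely to the defining conditions $B\j=R\j$ and $\j^{\top}B=\j^{\top}C$ for $R$ and $C$ in the statement. The left-hand side and the singular-value factor on the right then transcribe verbatim, since $\langle\vecchi_S,B\vecx\rangle=\langle\vecchi_S,B\j\rangle$, $\langle\vecy,B\vecchi_T\rangle=\langle\j,B\vecchi_T\rangle$, and $\langle\vecy,B\vecx\rangle=\langle\j,B\j\rangle$. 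It therefore only remains to simplify the two norm terms $\|R^{1/2}\vecchi_S\|^2$ and $\|C^{1/2}\vecchi_T\|^2$ appearing under the square root.

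The key computation is that these norms equal $\langle\vecchi_S,B\j\rangle$ and $\langle\j,B\vecchi_T\rangle$ respectively. First I would write $\|R^{1/2}\vecchi_S\|^2=\langle\vecchi_S,R\vecchi_S\rangle=\sum_{i\in S}R_{ii}$, using that $R$ is diagonal. Since $B\j=R\j$, the $i$-th diagonal entry of $R$ is the $i$-th row sum, $R_{ii}=(B\j)_i=\sum_{j}B_{ij}$, so $\sum_{i\in S}R_{ii}=\langle\vecchi_S,B\j\rangle$. Symmetrically, $\|C^{1/2}\vecchi_T\|^2=\sum_{j\in T}C_{jj}$, and $\j^{\top}B=\j^{\top}C$ gives $C_{jj}=\sum_{i}B_{ij}$, whence $\sum_{j\in T}C_{jj}=\langle\j,B\vecchi_T\rangle$. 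Substituting these two identities into the right-hand side of Theorem~\ref{th:superEML} yields exactly the claimed bound.

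There is no substantive obstacle here; the content lies entirely in recognizing that the diagonal entries of $R$ and $C$ are the row and column sums of $B$, which makes the two norm terms collapse to the inner products $\langle\vecchi_S,B\j\rangle$ and $\langle\j,B\vecchi_T\rangle$ that already occur in the bound. The only point to check for well-definedness is that $R$ and $C$ are invertible with positive diagonal and are uniquely determined, all of which follow from the assumption that $B$ has no zero rows or columns, so that every row and column sum is strictly positive.
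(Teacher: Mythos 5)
Your proof is correct and takes essentially the same route as the paper: the corollary is exactly the specialization of Theorem~\ref{th:superEML} to $\vecx=\vecy=\j$ (as recorded in Table~\ref{tab:summary}), with the norm terms collapsing via $\|R^{1/2}\vecchi_S\|^2=\langle\vecchi_S,B\j\rangle$ and $\|C^{1/2}\vecchi_T\|^2=\langle\j,B\vecchi_T\rangle$ because the diagonal entries of $R$ and $C$ are the row and column sums of $B$. Your additional check that these sums are strictly positive (hence $R^{-1/2}$, $C^{-1/2}$ exist and $R$, $C$ are unique) is the only detail the paper leaves implicit.
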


As a consequence of Theorem~\ref{th:superEML}, we also obtain known Expander Mixing Lemmas for irregular graphs. For example, if we choose~$x=y=\j$ and~$B=A$, Theorem~\ref{th:superEML} reduces to Theorem~\ref{th:strongVolEML}. An overview of the results that follow from Theorem~\ref{th:superEML} is given in Table~\ref{tab:summary}.

In addition to existing results, Theorem~\ref{th:superEML} implies some interesting new Expander Mixing Lemmas, which we will discuss below.

\subsubsection{EML using the Perron eigenvector}

In the classic Expander Mixing Lemma for irregular graphs by Krivelevich, Sudakov~\cite{KS2006} and Chung~\cite{chung2004}, the vertices are weighted by the degree vector. We will derive a similar result which instead uses the Perron eigenvector. 

\begin{theorem}
	 Let $G=(V,E)$ be a graph with adjacency eigenvalues $\lambda_1 \ge \dots \ge \lambda_n$ and normalized Perron eigenvector~$\vecnu$. Let $\lambda = \max\{\abs{\lambda_2},\abs{\lambda_n}\}$. Then for any subsets~$S,T\subseteq V$,
	 \[\big| e(S,T) - \lambda_1 \langle \vecchi_S,\vecnu \rangle\langle\vecchi_T,\vecnu\rangle \big|  \leq \lambda \sqrt{(\abs{S} - \langle\vecchi_S,\vecnu\rangle^2)(\abs{T} - \langle\vecchi_T,\vecnu\rangle^2)}.\]
  \label{th:PerronEML}
	 \end{theorem}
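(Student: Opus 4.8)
The plan is to derive Theorem~\ref{th:PerronEML} as a direct specialization of the general framework in Theorem~\ref{th:superEML}. The key observation is that the Perron eigenvector~$\vecnu$ is precisely the weighting that ``regularizes'' the adjacency matrix, so I would set~$B = A$ and~$\vecx = \vecy = \vecnu$. With this choice, the defining relations~$B\vecx = R\vecy$ and~$\vecy^\top B = \vecx^\top C$ become~$A\vecnu = R\vecnu$ and~$\vecnu^\top A = \vecnu^\top C$. Since~$A\vecnu = \lambda_1 \vecnu$ by definition of the Perron eigenvector, and~$A$ is symmetric so~$\vecnu^\top A = \lambda_1 \vecnu^\top$, both diagonal matrices collapse to~$R = C = \lambda_1 I$. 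This is the crucial simplification that makes everything else fall into place.

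With~$R = C = \lambda_1 I$, I would next evaluate each ingredient of the general inequality. First,~$M = R^{-1/2} B C^{-1/2} = \lambda_1^{-1} A$, whose singular values are~$\abs{\lambda_i}/\lambda_1$ ordered by magnitude; hence~$\sigma_2(M) = \lambda/\lambda_1$, where~$\lambda = \max\{\abs{\lambda_2},\abs{\lambda_n}\}$. Second, the left-hand inner products simplify: $\inn{\vecchi_S, B\vecchi_T} = e(S,T)$, while~$\inn{\vecchi_S, B\vecx} = \inn{\vecchi_S, \lambda_1\vecnu} = \lambda_1\inn{\vecchi_S,\vecnu}$ and similarly~$\inn{\vecy, B\vecchi_T} = \lambda_1\inn{\vecchi_T,\vecnu}$, and~$\inn{\vecy, B\vecx} = \inn{\vecnu, \lambda_1\vecnu} = \lambda_1$ since~$\norm{\vecnu}{} = 1$. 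Thus the centering term becomes~$\frac{(\lambda_1\inn{\vecchi_S,\vecnu})(\lambda_1\inn{\vecchi_T,\vecnu})}{\lambda_1} = \lambda_1\inn{\vecchi_S,\vecnu}\inn{\vecchi_T,\vecnu}$, matching the claimed centering.

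For the right-hand side, I would compute the two variance-like factors. Since~$R = \lambda_1 I$, we have~$\norm{R^{1/2}\vecchi_S}{}^2 = \lambda_1\abs{S}$, and the subtracted term is~$\frac{\inn{\vecchi_S,B\vecx}^2}{\inn{\vecy,B\vecx}} = \frac{(\lambda_1\inn{\vecchi_S,\vecnu})^2}{\lambda_1} = \lambda_1\inn{\vecchi_S,\vecnu}^2$. Factoring out~$\lambda_1$, the first factor equals~$\lambda_1(\abs{S} - \inn{\vecchi_S,\vecnu}^2)$, and analogously the second is~$\lambda_1(\abs{T} - \inn{\vecchi_T,\vecnu}^2)$. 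Multiplying the prefactor~$\sigma_2(M) = \lambda/\lambda_1$ by the square root~$\sqrt{\lambda_1^2(\abs{S}-\inn{\vecchi_S,\vecnu}^2)(\abs{T}-\inn{\vecchi_T,\vecnu}^2)} = \lambda_1\sqrt{(\abs{S}-\inn{\vecchi_S,\vecnu}^2)(\abs{T}-\inn{\vecchi_T,\vecnu}^2)}$, the two factors of~$\lambda_1$ cancel, yielding exactly~$\lambda\sqrt{(\abs{S}-\inn{\vecchi_S,\vecnu}^2)(\abs{T}-\inn{\vecchi_T,\vecnu}^2)}$, as required.

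I expect no serious obstacle here, since the argument is essentially a substitution followed by bookkeeping; the only point requiring a little care is verifying that~$R$ and~$C$ genuinely reduce to scalar multiples of the identity rather than general diagonal matrices. This hinges entirely on the eigenvector relation for~$\vecnu$ and the symmetry of~$A$, and it is the one place where the specific choice~$\vecx = \vecy = \vecnu$ is doing the real work. Everything downstream is then forced, including the convenient cancellation of the~$\lambda_1$ factors between the singular value and the normalization, which is what produces the clean statement in terms of~$\lambda$ alone.
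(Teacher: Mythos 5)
Your proof is correct and is exactly the paper's own argument: the paper proves Theorem~\ref{th:PerronEML} by the same one-line specialization of Theorem~\ref{th:superEML} with $B=A$, $\vecx=\vecy=\vecnu$ and $C=R=\lambda_1 I$, and your bookkeeping (including $\sigma_2(\lambda_1^{-1}A)=\lambda/\lambda_1$ and the cancellation of the $\lambda_1$ factors) fills in the details accurately. The only hypotheses worth noting explicitly are that $\vecnu$ is strictly positive and $A$ has no zero rows, both guaranteed by the paper's standing assumption that $G$ is connected.
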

\begin{proof}
This follows from Theorem~\ref{th:superEML} where~$\vecx=\vecy=\vecnu$,~$B=A$ and~$C=R=\lambda_1 I$.
\end{proof}

\noindent In Section \ref{sec:converse} we will prove the converse of Theorem \ref{th:PerronEML}.

If the given graph has an equitable partition, we can use this extra structure to simplify Theorem~\ref{th:PerronEML} as follows.

\begin{corollary}
 Let $G=(V,E)$ be a graph with adjacency eigenvalues $\lambda_1 \ge \dots \ge \lambda_n$ and normalized Perron eigenvector~$\vecnu$. Let $\lambda = \max\{\abs{\lambda_2},\abs{\lambda_n}\}$ and let~$\Pa=\{C_1,\dots,C_m\}$ be an equitable partition of~$G$ such that $S \subseteq C_i$ and~$T\subseteq C_j$ for some~$i,j\in[m]$. Then

	 \[\big| e(S,T) - \lambda_1 \abs{S}\abs{T}\nu_S\nu_T  \big| \leq \lambda \sqrt{\abs{S}\abs{T}(1 - \abs{S}\nu_S^2)(1 - \abs{T}\nu_T^2)}\]
  for constants~$\nu_S$ and~$\nu_T$.
  \label{cor:PerronEMLEquitable}
\end{corollary}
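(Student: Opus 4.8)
The plan is to specialize Theorem~\ref{th:PerronEML} to the situation where $S$ and $T$ are contained in cells of an equitable partition, and observe that on such cells the Perron eigenvector is constant. The key structural fact I would use is that if $\Pa$ is equitable then the Perron eigenvector $\vecnu$ is constant on each cell $C_k$: indeed, the quotient matrix inherits the Perron--Frobenius structure, and the eigenvector lift of the quotient Perron vector is a positive eigenvector of $A$ for $\lambda_1$, which by simplicity must be $\vecnu$ up to scaling. Hence there exist constants $\nu_{C_1},\dots,\nu_{C_m}$ with $\nu_u = \nu_{C_k}$ for every $u \in C_k$. I would define $\nu_S \define \nu_{C_i}$ and $\nu_T \define \nu_{C_j}$, so these are exactly the (common) entries of $\vecnu$ on the cells containing $S$ and $T$.

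With this in hand the computation is purely a matter of rewriting the inner products in Theorem~\ref{th:PerronEML}. Since $\vecchi_S$ is supported on $S \subseteq C_i$ where $\vecnu$ takes the constant value $\nu_S$, I would compute
\[
\langle \vecchi_S, \vecnu \rangle = \sum_{u \in S} \nu_u = \abs{S}\,\nu_S,
\]
and likewise $\langle \vecchi_T, \vecnu \rangle = \abs{T}\,\nu_T$. Substituting these into the left-hand side of Theorem~\ref{th:PerronEML} turns the term $\lambda_1 \langle \vecchi_S,\vecnu\rangle\langle\vecchi_T,\vecnu\rangle$ into $\lambda_1 \abs{S}\abs{T}\nu_S\nu_T$, matching the desired expression. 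For the right-hand side, the factors become
\[
\abs{S} - \langle \vecchi_S,\vecnu\rangle^2 = \abs{S} - \abs{S}^2\nu_S^2 = \abs{S}\,(1 - \abs{S}\nu_S^2),
\]
and symmetrically $\abs{T} - \langle \vecchi_T,\vecnu\rangle^2 = \abs{T}\,(1 - \abs{T}\nu_T^2)$. Taking the product and pulling the factor $\abs{S}\abs{T}$ out of the square root yields exactly the stated bound $\lambda \sqrt{\abs{S}\abs{T}(1-\abs{S}\nu_S^2)(1-\abs{T}\nu_T^2)}$.

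The only genuine content, and thus the main obstacle, is justifying that $\vecnu$ is constant on the cells of an equitable partition; once that is established the rest is bookkeeping. I would prove this via the standard argument that for an equitable partition the column space of the characteristic matrix of $\Pa$ is $A$-invariant, so $A$ has an eigenbasis respecting the partition, and in particular the unique positive eigenvector $\vecnu$ (Perron--Frobenius, using connectedness as assumed throughout) lies in this invariant subspace and is therefore constant on cells. Alternatively, I could invoke the weight-equitable machinery from the preliminaries. I would make sure to state explicitly that $\nu_S$ and $\nu_T$ are well-defined precisely because $S$ and $T$ each lie within a single cell, which is what the hypothesis $S\subseteq C_i$, $T\subseteq C_j$ guarantees.
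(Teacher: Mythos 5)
Your proof is correct and takes essentially the same route as the paper: the paper also derives this corollary by plugging into Theorem~\ref{th:PerronEML} the fact that the Perron eigenvector of a connected graph is constant on the cells of an equitable partition (which it cites from Godsil and Royle, Section~9.3, rather than proving). Your write-up merely makes explicit the substitution $\langle\vecchi_S,\vecnu\rangle = \abs{S}\nu_S$, $\langle\vecchi_T,\vecnu\rangle = \abs{T}\nu_T$ and the justification of the constancy fact, both of which the paper leaves implicit.
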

\begin{proof}
This follows directly from the fact that for an equitable partition, $\vecnu$ is constant on the cells, see~\cite[Section 9.3]{GR2001}. 
\end{proof}

It is known that the orbits of the automorphism group induce an equitable partition~\cite[Section 9.3]{GR2001}. Perhaps Corollary~\ref{cor:PerronEMLEquitable} is therefore most interesting in the case where we have a small equitable partition, or a small number of orbits of the automorphism group, as in this case it is more likely that the sets $S$ and $T$ are contained entirely within a cell of the partition.

\subsubsection{EML using the weight-quotient matrix}
\label{subsec:expandermixingnew2}

If we apply Theorem~\ref{th:superEML} to a matrix~$B$ representing a graph,~$\vecx$ and~$\vecy$ can be viewed as some kind of weight vector assigning a weight to each vertex. It therefore makes sense to draw inspiration from the theory of weight-equitable partitions to derive a new weighted version of the Expander Mixing Lemma. Such a result can be obtained from Theorem~\ref{th:superEML} by choosing~$\vecx=\vecy=\vecnu$ and~$B=A\circ \vecnu\vecnu^{\top}$, where~$\circ$ denotes the entry-wise matrix product. However, we provide an alternative, more insightful proof below based on eigenvalue interlacing.

In his PhD thesis, Haemers proved an Expander Mixing Lemma for $1$-designs~\cite[Theorem 5.1]{H1995}. This result implied the well-known upper bound by Vinh on the number of incidences in a point-line set in~$\mathbb{F}_q^2$~\cite{vinh2011}, and its generalization to combinatorial designs by Lund and Saraf~\cite{LS2016} (see \cite{bishnoiblog} for more details). It also provided an Expander Mixing Lemma for biregular graphs, which can be seen as 1-designs where the cells of the bipartition are associated with points and blocks respectively. In this section, we use Haemers' approach to derive a weighted Expander Mixing Lemma for general graphs. We do so using the weight-quotient matrix of a partition, interlacing techniques and the double cover of a graph. 

Given a graph~$G=(V,E)$ with adjacency matrix $A'$, its \textit{double cover} is the bipartite graph with adjacency matrix

\[A = \begin{pmatrix} 0 & A'\\A' & 0\end{pmatrix},\]
i.e., each vertex~$v\in V$ is replaced by two vertices~$v_1,v_2$ and~$v_1\sim w_2$ if~$v\sim w$~in $G$. We will use this idea to show the following result for general (nonbipartite) graphs.

\begin{theorem}
Let~$G=(V,E)$ be a graph with adjacency matrix~$A$, adjacency eigenvalues $\lambda_1 \ge \dots \ge \lambda_n$ and normalized Perron eigenvector~$\vecnu$. Let $\lambda = \max\{\abs{\lambda_2},\abs{\lambda_n}\}$. Let~$S,T\subseteq V$, and~$\tilde{b}^{*}_{ST}=\displaystyle \sum_{u \in S,v \in T}a_{uv}\nu_{u}\nu_{v}$. Then

\[\abs*{\tilde{b}^{*}_{ST} - \lambda_1 \|\rho(S) \|^2 \|\rho(T)\|^2} \le \lambda\norm{\rho(S)}{}\norm{\rho(T)}{}\norm{\rho(\overline{S})}{}\norm{\rho(\overline{T})}{}.\]
	\label{th:weightedEML}
	\end{theorem}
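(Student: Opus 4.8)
The statement can be read off Theorem~\ref{th:superEML} directly by applying it to the Schur product $B=A\circ\vecnu\vecnu^{\top}$ with weight vectors $\vecx=\vecy=\j$. Since $A\vecnu=\lambda_1\vecnu$, one has $(B\j)_u=\lambda_1\nu_u^2$, hence $R=C=\lambda_1\diag(\nu_u^2)$ and $M=R^{-1/2}BC^{-1/2}=\lambda_1^{-1}A$, so $\sigma_2(M)=\lambda/\lambda_1$. The central term becomes $\lambda_1\|\rho(S)\|^2\|\rho(T)\|^2$, each factor under the root collapses through $\|\rho(S)\|^2+\|\rho(\overline{S})\|^2=1$ to $\lambda_1\|\rho(S)\|^2\|\rho(\overline{S})\|^2$, and the constants cancel to give the bound exactly. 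I would keep this as a one-line check, but present the interlacing argument below, since that is the structural route the double-cover discussion is setting up.

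First I would pass to the double cover with adjacency matrix $\widehat{A}=\left(\begin{smallmatrix}0&A\\A&0\end{smallmatrix}\right)$, whose spectrum is $\{\pm\lambda_i\}_{i=1}^{n}$ and whose Perron eigenvector is $\widehat{\vecnu}=\tfrac{1}{\sqrt{2}}(\vecnu^{\top},\vecnu^{\top})^{\top}$. Partitioning the first copy of $V$ by $\{S,\overline{S}\}$ and the second copy by $\{T,\overline{T}\}$ yields four cells, and a short computation with $\widehat{\vecnu}$ shows that the associated normalized weight-quotient matrix is anti-block-diagonal, $\bar{B}^{*}=\left(\begin{smallmatrix}0&N\\N^{\top}&0\end{smallmatrix}\right)$, where $N\in\R^{2\times 2}$ has $(S,T)$-entry $\tilde{b}^{*}_{ST}/(\|\rho(S)\|\,\|\rho(T)\|)$ and analogous entries for the three remaining cell pairs. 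The essential extra ingredient is the weight-regularity of $\vecnu$, which gives the four row-sum identities $\tilde{b}^{*}_{ST}+\tilde{b}^{*}_{S\overline{T}}=\lambda_1\|\rho(S)\|^2$, $\tilde{b}^{*}_{ST}+\tilde{b}^{*}_{\overline{S}T}=\lambda_1\|\rho(T)\|^2$, and their two partners.

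The engine is Cauchy interlacing. Writing $\bar{B}^{*}=W^{\top}\widehat{A}W$ for the $2n\times 4$ matrix $W$ whose columns are the normalized indicator vectors of the four cells (an orthonormal system), the eigenvalues of $\bar{B}^{*}$ interlace those of $\widehat{A}$. Because $\widehat{\vecnu}$ lies in the column space of $W$, the quotient inherits the Perron eigenpair exactly, so $\sigma_1(N)=\lambda_1$; interlacing then forces $\sigma_2(N)\le\mu_2(\widehat{A})\le\lambda$. Consequently $|\det N|=\sigma_1(N)\sigma_2(N)=\lambda_1\sigma_2(N)\le\lambda_1\lambda$. Expanding the $2\times 2$ determinant and eliminating $\tilde{b}^{*}_{S\overline{T}},\tilde{b}^{*}_{\overline{S}T},\tilde{b}^{*}_{\overline{S}\,\overline{T}}$ via the row-sum identities collapses its numerator to $\lambda_1\big(\tilde{b}^{*}_{ST}-\lambda_1\|\rho(S)\|^2\|\rho(T)\|^2\big)$, whence $|\det N|=\lambda_1\,|\tilde{b}^{*}_{ST}-\lambda_1\|\rho(S)\|^2\|\rho(T)\|^2|/(\|\rho(S)\|\|\rho(T)\|\|\rho(\overline{S})\|\|\rho(\overline{T})\|)$; combining with $|\det N|\le\lambda_1\lambda$ and cancelling $\lambda_1$ gives the theorem.

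The main obstacle is pinning down $\sigma_1(N)=\lambda_1$ exactly, rather than merely $\sigma_1(N)\le\lambda_1$: this is what turns interlacing into the usable identity $\sigma_1(N)\sigma_2(N)=|\det N|$, and it hinges on the Perron vector $\widehat{\vecnu}$ projecting into the four-dimensional cell space so that the quotient reproduces the top eigenpair. The determinant computation is then a short but non-obvious cancellation driven entirely by the weight-regularity relations. The only other point needing care is the bookkeeping that the double cover's second-largest eigenvalue is bounded by $\lambda=\max\{|\lambda_2|,|\lambda_n|\}$ and not by $\lambda_2$ alone, since $-\lambda_n$ may dominate.
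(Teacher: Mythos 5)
Your proof is correct and takes essentially the same route as the paper's: the paper likewise derives the statement from Theorem~\ref{th:superEML} with $B=A\circ\vecnu\vecnu^{\top}$, and its written proof is the identical double-cover argument --- the weight-quotient matrix of the partition $\{S,\overline{S},T,\overline{T}\}$, eigenvalue interlacing (via Fiol's lemma), and the same determinant cancellation driven by the row-sum identities $\tilde{b}^{*}_{ST}+\tilde{b}^{*}_{S\overline{T}}=\lambda_1\|\rho(S)\|^2$, etc. The only cosmetic differences are that the paper states the reduction with $\vecx=\vecy=\vecnu$ whereas your choice $\vecx=\vecy=\j$ is the one that actually makes $R=C=\lambda_1\diag(\nu_u^2)$ and the check one line, and that your ``main obstacle'' $\sigma_1(N)=\lambda_1$ is true but not needed: $\sigma_1(N)\le\lambda_1$ and $\sigma_2(N)\le\lambda$ already give $\abs{\det N}=\sigma_1(N)\sigma_2(N)\le\lambda_1\lambda$, which is exactly how the paper sidesteps the issue by bounding $\det\bar{B}^{*}/\lambda_1(G')^{2}\le\det\bar{B}^{*}/\mu_1^{2}=\mu_2^{2}\le\lambda^{2}$.
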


	\begin{proof}
    Let~$G'$ be the double cover of~$G$ on vertex set~$L'\cup R'$, with~$L',R'$ two copies of~$V$. Define~$\overline{S} = L'\setminus S$,~$\overline{T} = R'\setminus T$ and let~$\tilde{B}^{*}$ be the weight-quotient matrix with regard to the partition~$\Pa = \{S,T,\overline{S},\overline{T}\}$ in~$G'$. Denote the $i^{\text{th}}$ largest eigenvalue of~$G'$ by~$\lambda_i(G')$. It then suffices to prove that 
    \begin{equation}\abs*{\tilde{b}^{*}_{ST}\|\rho(R')\|^2 - \lambda_1(G') \|\rho(S) \|^2 \|\rho(T)\|^2} \le \lambda_2(G') \norm{\rho(S)}{}\norm{\rho(T)}{}\norm{\rho(\overline{S})}{}\norm{\rho(\overline{T})}{},\label{eq:HaemersBipartite}\end{equation}
    from which the result follows. 
 
	The normalized weight-quotient matrix of~$\Pa$ is 
	\[\bar{B}^*=\begin{pmatrix}0&0&\bar{b}^{*}_{ST}&\bar{b}^{*}_{S\overline{T}}\\
	0&0&\bar{b}^{*}_{\overline{S}T}&\bar{b}^{*}_{\overline{S}\overline{T}}\\
	\bar{b}^{*}_{TS}&\bar{b}^{*}_{T\overline{S}}&0&0\\
	\bar{b}^{*}_{\overline{T}S}&\bar{b}^{*}_{\overline{T}\overline{S}}&0&0\\
	\end{pmatrix}.\]
	Note that~$\bar{B}^*$ is symmetric, as its entry for cells~$X,Y$ is defined as \[\bar{b}^*_{XY}=\frac{\sum_{u\in X,v\in Y}\nu_u\nu_v a_{uv}}{\|\rho(X)\|\|\rho(Y)\|},\]
    so its determinant equals $\det{\bar{B}^*} = \left(\bar{b}^{*}_{ST}\bar{b}^{*}_{\overline{S}\overline{T}}-\bar{b}^{*}_{S\overline{T}}\bar{b}^{*}_{\overline{S}T}\right)^2$.
	Let~$\mu_1\ge\dots\ge\mu_4$ be the eigenvalues of~$\bar{B}^*$. Note that~$\mu_1 = -\mu_4$ and~$\mu_2 = -\mu_3$. Applying Lemma~2.3 from~\cite{F1999}, we can use eigenvalue interlacing to obtain~$\lambda_1(G')\ge \mu_1$,~$\lambda_2(G')\ge \mu_2$ and~$\lambda_{n-1}(G') \le \mu_3$. Therefore,
	\begin{align*} 
 \frac{\left(\bar{b}^{*}_{ST}\bar{b}^{*}_{\overline{S}\overline{T}}-\bar{b}^{*}_{S\overline{T}}\bar{b}^{*}_{\overline{S}T}\right)^2}{\lambda_1(G')^2} &= \frac{\det{\bar{B}^*}}{\lambda_1(G')^2} \le \frac{\det{\bar{B}^*}}{\mu_1^2} = \frac{\det{\bar{B}^*}}{-\mu_1\mu_4}\\ &= -\mu_2\mu_3 \le \lambda_2(G')\lambda_{n-1}(G') =\lambda_2(G')^2.
 \end{align*}
	Taking the square root on each side and substituting~$\bar{b}^*_{XY} = \tilde{b}^*_{XY}/(\norm{\rho(X)}{}\norm{\rho(Y)}{})$ gives 
 \[\abs*{\frac{\tilde{b}^{*}_{ST}\tilde{b}^{*}_{\overline{S}\overline{T}}-\tilde{b}^{*}_{\overline{S}T}\tilde{b}^{*}_{S\overline{T}}}{\lambda_1(G')}} \le \lambda_2(G') \norm{\rho(S)}{}\norm{\rho(T)}{}\norm{\rho(\overline{S})}{}\norm{\rho(\overline{T})}{}.\]
 Furthermore, note that
 \[\tilde{b}^{*}_{\overline{S}\overline{T}} = \sum_{u\in\overline{S} } \sum_{v\in\overline{T} } a_{u,v}\nu_u\nu_v = \sum_{v\in\overline{T} }\nu_v\sum_{u\in G(v) }\nu_u  - \sum_{u\in S}\sum_{v\in\overline{T} }a_{u,v}\nu_u \nu_v = \lambda_1(G')\|\rho(\overline{T})\|^2-\tilde{b}^{*}_{S\overline{T}},\]
 and similarly,~$\tilde{b}^{*}_{\overline{S}T} = \lambda_1(G')\|\rho(T)\|^2-\tilde{b}^{*}_{ST}$,~$\tilde{b}^{*}_{S\overline{T}} = \lambda_1(G')\|\rho(S)\|^2-\tilde{b}^{*}_{ST}$. It then follows that
 \begin{align*}
    \tilde{b}^{*}_{ST}\tilde{b}^{*}_{\overline{S}\overline{T}}-\tilde{b}^{*}_{\overline{S}T}\tilde{b}^{*}_{S\overline{T}} 
    &= \tilde{b}^{*}_{ST} \left( \lambda_1(G')\|\rho(\overline{T})\|^2-\tilde{b}^{*}_{S\overline{T}}\right) -\left(\lambda_1(G')\|\rho(T)\|^2-\tilde{b}^{*}_{ST} \right) \tilde{b}^{*}_{S\overline{T}}\\
    &= \lambda_1(G')\tilde{b}^{*}_{ST}\|\rho(\overline{T})\|^2 -\lambda_1(G')\|\rho(T)\|^2\tilde{b}^{*}_{S\overline{T}}\\
    &= \lambda_1(G') \tilde{b}^{*}_{ST}\left(\|\rho(T)\|^2 + \|\rho(\overline{T})\|^2 \right) - \lambda_1(G')^2 \|\rho(S)\|^2\|\rho(T)\|^2 \\
    &= \lambda_1(G') \tilde{b}^{*}_{ST}\|\rho(R')\|^2 - \lambda_1(G')^2 \|\rho(S)\|^2\|\rho(T)\|^2 ,
 \end{align*}
 which gives the desired inequality.
 \end{proof}

 \begin{example}
     Let~$G$ be a bowtie graph (that is, a graph consisting of two triangles joined at a vertex) and let~$v$ be the vertex of degree four. This graph has spectral parameters
     $\lambda_1 = \frac{1}{2}(1+\sqrt{17})$,~$\lambda = \frac{1}{2}(\sqrt{17}-1)$ and~$\vecnu = (\alpha,\alpha,\alpha,\alpha,\beta)$, with~$\alpha = \frac{1}{2}\sqrt{\frac{1}{2}(1+\frac{1}{\sqrt{17}})}$ and~$\beta = \sqrt{\frac{1}{2}(1-\frac{1}{\sqrt{17}})}$. If~$S = V\setminus \{v\},\ T = \{v\}$, then
     \[\|\rho(S)\| = \|\rho(\overline{T})\| = \sqrt{4\alpha^2} = 2\alpha, \ \ \ \ \|\rho(T)\| = \|\rho(\overline{S})\| = \beta,\ \ \ \ \tilde{b}^{*}_{ST} = 4\alpha\beta = \frac{4}{\sqrt{17}}. \]
     Filling this in in Theorem~\ref{th:weightedEML}, we see that the bound is tight, since~$\alpha^2\beta^2 = \frac{1}{17}$ and hence
     \[\abs*{\tilde{b}^{*}_{ST} - \lambda_1 \|\rho(S) \|^2 \|\rho(T)\|^2} = \frac{2}{17}\left(\sqrt{17}-1\right) = \lambda\norm{\rho(S)}{}\norm{\rho(T)}{}\norm{\rho(\overline{S})}{}\norm{\rho(\overline{T})}{}.\]
     
 \end{example}

\begin{remark}
If there exists an equitable partition~$\Pa=\{C_1,\dots,C_m\}$ such that $S \subseteq C_i$ and~$T\subseteq C_j$ for some~$i,j\in[m]$, Theorem~\ref{th:weightedEML} reduces to Corollary~\ref{cor:PerronEMLEquitable} (note that~$\|\rho(\overline{S})\|=\sqrt{1-\|\rho(S)\|^2}$ and~$\|\rho(\overline{T})\|=\sqrt{1-\|\rho(T)\|^2}$). If~$S,T$ are contained in cells of a weight-equitable partition, we cannot make the same simplifications, since~$\vecnu$ is only constant on each cell if the partition is also equitable. However, we do know that~$\tilde{b}^*_{ST} = b^*_{ST}\|\rho(S)\|^2$. 
\end{remark}

For biregular graphs, Theorem~\ref{th:weightedEML} gives the following known result.
    
\begin{corollary}[{\cite[Theorem 3.1.1]{{haemersThesis}}\cite[Theorem 5.1]{H1995}}]
Let~$G=(L\cup R,E)$ be a biregular graph with degrees~$d_L$ and~$d_R$ and adjacency eigenvalues $\lambda_1 \ge \dots \ge \lambda_n$. Let~$S\subseteq L$,~$T\subseteq R$ and define~$\overline{S} = L\setminus S$,~$\overline{T} = R\setminus T$. Then
	\[\abs*{\frac{e(S,T)\abs{L}\abs{R}}{|E|} - \abs{S}\abs{T}} \le \frac{\lambda_2}{\sqrt{d_L d_R}} \sqrt{\abs{S}\abs{T}\abs{\overline{S}}\abs{\overline{T}}}.\]
	\label{cor:unweightedBipartiteEML}
\end{corollary}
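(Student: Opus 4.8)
The plan is to apply the bipartite Expander Mixing Lemma that is proved \emph{inside} the proof of Theorem~\ref{th:weightedEML}, namely inequality~\eqref{eq:HaemersBipartite}, directly to $G$ rather than to a double cover. The interlacing argument establishing~\eqref{eq:HaemersBipartite} only uses that the ambient graph is bipartite with a four-cell partition, two cells in each part; since $G$ is already bipartite with $S,\overline{S}\subseteq L$ and $T,\overline{T}\subseteq R$, I would simply re-run that argument on $G$ itself. This replaces $\lambda_1(G'),\lambda_2(G')$ by $\lambda_1,\lambda_2$ and gives
\[\abs*{\tilde{b}^{*}_{ST}\norm{\rho(R)}{}^2 - \lambda_1\norm{\rho(S)}{}^2\norm{\rho(T)}{}^2} \le \lambda_2\,\norm{\rho(S)}{}\norm{\rho(T)}{}\norm{\rho(\overline{S})}{}\norm{\rho(\overline{T})}{}.\]
It is essential to use~\eqref{eq:HaemersBipartite} rather than Theorem~\ref{th:weightedEML} as a black box: for a connected bipartite graph the spectrum is symmetric, so $\lambda=\max\{\abs{\lambda_2},\abs{\lambda_n}\}=\lambda_1$ and the black-box bound would be trivial, whereas the interlacing step produces the sharper $\lambda_2$.

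Next I would determine the spectral data of the biregular graph. Setting $\nu_v=\sqrt{d_v/(2\abs{E})}$, a one-line check shows $A\nu=\sqrt{d_Ld_R}\,\nu$: every vertex of $L$ has $d_L$ neighbours, each of weight $\sqrt{d_R/(2\abs{E})}$, and symmetrically for $R$. This vector is positive and satisfies $\norm{\nu}{}^2=\sum_v d_v/(2\abs{E})=1$, so by Perron--Frobenius it is the normalized Perron eigenvector and $\lambda_1=\sqrt{d_Ld_R}$.

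Then I would evaluate every quantity in the displayed inequality. Each edge of $E(S,T)$ contributes $\sqrt{d_Ld_R}/(2\abs{E})$, so $\tilde{b}^{*}_{ST}=e(S,T)\sqrt{d_Ld_R}/(2\abs{E})$; likewise $\norm{\rho(S)}{}^2=\abs{S}d_L/(2\abs{E})$ with the analogous formulas for $T,\overline{S},\overline{T}$, and $\norm{\rho(R)}{}^2=\abs{R}d_R/(2\abs{E})=\tfrac12$ because the degrees in $R$ sum to $\abs{E}$. Substituting these and using the biregularity identities $\abs{L}d_L=\abs{R}d_R=\abs{E}$ (so $d_Ld_R=\abs{E}^2/(\abs{L}\abs{R})$) is then bookkeeping: a factor $\sqrt{d_Ld_R}/(4\abs{L}\abs{R})$ comes out of the left-hand side and a factor $d_Ld_R/(4\abs{E}^2)=1/(4\abs{L}\abs{R})$ out of the right-hand side; after cancelling the common $1/(4\abs{L}\abs{R})$ and dividing through by $\sqrt{d_Ld_R}$, the inequality collapses to the claimed
\[\abs*{\frac{e(S,T)\abs{L}\abs{R}}{\abs{E}} - \abs{S}\abs{T}} \le \frac{\lambda_2}{\sqrt{d_Ld_R}}\sqrt{\abs{S}\abs{T}\abs{\overline{S}}\abs{\overline{T}}}.\]

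The only genuinely conceptual step, and the one I would be most careful to justify, is the $\lambda_2$-versus-$\lambda$ point in the first paragraph, which forces the proof to go through~\eqref{eq:HaemersBipartite} directly; concretely this relies on the bipartite spectral symmetry $\lambda_{n-1}=-\lambda_2$, which is exactly what makes the interlacing product close up at $\lambda_2^2$. Everything after that is the eigenvector identification and routine algebraic simplification.
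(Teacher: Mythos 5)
Your proposal is correct and takes essentially the same route as the paper: the paper's proof also applies Equation~\eqref{eq:HaemersBipartite} directly to the biregular graph~$G$ (with Perron eigenvector $1/\sqrt{2|L|}$ on~$L$ and $1/\sqrt{2|R|}$ on~$R$, i.e.\ your $\nu_v=\sqrt{d_v/(2|E|)}$, and $\lambda_1=\sqrt{d_Ld_R}$), then simplifies using $|E|=d_L|L|=d_R|R|$. Your explicit justification of why one must use \eqref{eq:HaemersBipartite} rather than Theorem~\ref{th:weightedEML} as a black box --- since bipartite spectral symmetry forces $\lambda=\lambda_1$ and would trivialize the bound --- is a correct sharpening of a point the paper's wording (``direct application of Theorem~\ref{th:weightedEML}'') leaves implicit.
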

\begin{proof}
    The result follows from a direct application of Theorem~\ref{th:weightedEML} to the biregular graph~$G$. In this case, the entries of~$\tilde{B}^{*}$ equal the number of edges between the corresponding sets. Moreover, the Perron eigenvector equals~$1/\sqrt{2|L|}$ on~$L$ and~$1/\sqrt{2|R|}$ on~$R$. This reduces Equation~\eqref{eq:HaemersBipartite} in the proof of Theorem~\ref{th:weightedEML} to
    \[\abs*{\frac{e(S,T)}{2} - \sqrt{d_L d_R}\frac{|S||T|}{2\sqrt{|L||R|}}} \le  \frac{\lambda_2}{2\sqrt{|L||R|}}\sqrt{\abs{S}\abs{T}\abs{\overline{S}}\abs{\overline{T}}}.\]
    This expression can be simplified by multiplying each side by~$\sqrt{|L||R|} / 2\sqrt{d_L d_R}$ and substituting~$|E| = d_L |L| = d_R |R|$.
\end{proof}

\subsubsection{$k$-Expander Mixing Lemma}\label{sec:extendedexpandermixinglemma}
The classic Expander Mixing Lemma bounds the number of edges between two sets of vertices in a graph, i.e., the sum of the entries in~$A$ corresponding to these edges. In this section, we consider an Expander Mixing Lemma for polynomials of the adjacency matrix, inspired by the following theorem of Abiad, Ciaob\u{a} and Tait~\cite{ACT2016}.

For $k$ a natural number and~$x\in\mathbb{R}$, let~$x^{(k)} = x + x^2 + \cdots + x^k$. Given a graph~$G$ with adjacency eigenvalues~$\lambda_1\ge \dots \ge \lambda_n$ and a polynomial~$p\in \mathbb{R}_k[x]$, we define
\[W(p) = \max_{u\in V} \{(p(A))_{uu}\}, \ \ \ \ \lambda(p) = \min_{i\in [2,n]}{p(\lambda_i)}, \ \ \ \ \Lambda(p) = \max_{i\in [2,n]}{p(\lambda_i)}.\]

\begin{theorem}[$k$-Expander Mixing Lemma, {\cite[Theorem 4.3]{ACT2016}}]
Let $G=(V,E)$ be a $d$-regular graph. For $S, T\subseteq V$, let $W_k(S,T)$ be the number of walks of length at most $k$ with one endpoint in $S$ and one endpoint in $T$. Then for any $S,T\subseteq V(G)$, we have
\begin{equation*}
\left|W_k(S,T) - \frac{d^{(k)} |S||T|}{n}\right| \leq \lambda^{(k)} \sqrt{ |S||T| \left(1 - \frac{|S|}{n}\right)\left(1 - \frac{|T|}{n}\right)} < \lambda^{(k)} \sqrt{|S||T|}.
\end{equation*}
\label{th:kEML}
\end{theorem}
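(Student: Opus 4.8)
The plan is to recognize $W_k(S,T)$ as a bilinear form in the polynomial matrix $p(A)$, where $p(x) = x^{(k)} = x + x^2 + \cdots + x^k$, and then to run the standard Expander Mixing Lemma argument with $p(A)$ playing the role of $A$. Since the number of walks of length exactly $\ell$ from a vertex $u$ to a vertex $v$ equals $(A^\ell)_{uv}$, summing over the lengths $1,\dots,k$ and over endpoints gives $W_k(S,T) = \sum_{\ell=1}^k \langle \vecchi_S, A^\ell \vecchi_T\rangle = \langle \vecchi_S, p(A)\vecchi_T\rangle$, which at $k=1$ recovers the convention $e(S,T)=\langle \vecchi_S, A\vecchi_T\rangle$. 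Note that $p(A)=A+\cdots+A^k$ is a nonnegative symmetric matrix, and since $G$ is $d$-regular, $p(A)\j = p(d)\j = d^{(k)}\j$, so its row sums are the constant $d^{(k)}$.

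First I would fix an orthonormal eigenbasis $\vecv_1=\tfrac{1}{\sqrt{n}}\j,\vecv_2,\dots,\vecv_n$ of $A$ with $A\vecv_i=\lambda_i\vecv_i$, so that $p(A)\vecv_i=p(\lambda_i)\vecv_i$. Writing $\vecchi_S=\sum_i\alpha_i\vecv_i$ and $\vecchi_T=\sum_i\beta_i\vecv_i$, the top coefficients are $\alpha_1=|S|/\sqrt{n}$ and $\beta_1=|T|/\sqrt{n}$, while $\sum_{i\ge2}\alpha_i^2=|S|-|S|^2/n=|S|(1-|S|/n)$ and likewise $\sum_{i\ge2}\beta_i^2=|T|(1-|T|/n)$. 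Expanding, $W_k(S,T)=\sum_i p(\lambda_i)\alpha_i\beta_i$, where the $i=1$ term contributes exactly $p(d)\alpha_1\beta_1=d^{(k)}|S||T|/n$, the claimed main term. Equivalently, and more in the spirit of this paper, one may simply invoke Theorem~\ref{th:superEML} with $B=p(A)$, $\vecx=\vecy=\j$ and $R=C=d^{(k)}I$: this yields the same main term and the same radicand $|S|(1-|S|/n)\cdot|T|(1-|T|/n)$, now with leading constant $\sigma_2\big(\tfrac{1}{d^{(k)}}p(A)\big)\,d^{(k)}=\max_{i\ge2}|p(\lambda_i)|$.

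It then remains to bound the tail $\big|\sum_{i\ge2}p(\lambda_i)\alpha_i\beta_i\big|$. By Cauchy--Schwarz this is at most $\big(\max_{i\ge2}|p(\lambda_i)|\big)\big(\sum_{i\ge2}\alpha_i^2\big)^{1/2}\big(\sum_{i\ge2}\beta_i^2\big)^{1/2}$, and the two sums are precisely the factors computed above. The crux, and the only genuinely new ingredient beyond the $k=1$ case, is the estimate $\max_{i\ge2}|p(\lambda_i)|\le\lambda^{(k)}$, which I would obtain from the triangle inequality applied monomial by monomial: for every $i\ge2$ we have $|\lambda_i|\le\lambda$, hence $|p(\lambda_i)|=\big|\sum_{j=1}^k\lambda_i^j\big|\le\sum_{j=1}^k|\lambda_i|^j\le\sum_{j=1}^k\lambda^j=\lambda^{(k)}$, using $\lambda\ge0$. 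Combining the pieces gives the stated bound, and the final strict inequality follows from $(1-|S|/n)(1-|T|/n)<1$ whenever $S$ and $T$ are nonempty (both sides vanish otherwise). The one subtlety to handle carefully is the walk-counting convention for endpoints lying in $S\cap T$, which is pinned down exactly by the identity $W_k(S,T)=\langle\vecchi_S,p(A)\vecchi_T\rangle$.
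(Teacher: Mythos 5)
Your proof is correct and follows essentially the paper's own route: the paper obtains Theorem~\ref{th:kEML} by invoking Theorem~\ref{th:superEML} with $B=A^{(k)}$ and $\vecx=\vecy=\j$ (so $R=C=d^{(k)}I$), which is exactly your second formulation, and your direct eigenbasis/Cauchy--Schwarz expansion is just that specialization unrolled. The one step you spell out that the paper leaves implicit is the termwise estimate $\max_{i\ge 2}\abs{p(\lambda_i)}\le\lambda^{(k)}$, which is indeed what converts the constant $\sigma_2(R^{-1/2}BC^{-1/2})\,d^{(k)}$ into $\lambda^{(k)}$.
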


Note that Theorem~\ref{th:kEML} can also be derived from Theorem~\ref{th:superEML} by substituting~$B=A^{(k)}$ and~$\vecx=\vecy=\j$. In \cite[Theorem 4.2]{ACT2016}, it was used to obtain a Hoffman-type bound on the~$k$-independence number, see Section~\ref{application:hoffmantypebound}.

The number of walks of length at most~$k$ between vertices~$v,w$ equals the~$v,w$-entry of the matrix~$A+A^2+\dots +A^k$. Theorem~\ref{th:kEML} can be extended to other degree-$k$ polynomials in~$A$, which will allow us to derive better bounds on the $k$-independence number using a more general polynomial. Moreover, it can be generalized to irregular graphs using the Perron eigenvector, similarly as we did for Theorem~\ref{th:PerronEML}. We combine these generalizations in Theorem~\ref{th:generalPolyEML}\ref{th:generalPolyEMLii}. Additionally, we prove a stronger version for the case~$S=T$ in Theorem~\ref{th:generalPolyEML}\ref{th:generalPolyEMLi}, which we will use in Section \ref{application:hoffmantypebound} to derive an upper bound on the~$k$-independence number. 

\begin{theorem}
Let $G=(V,E)$ be a graph with adjacency matrix $A$, adjacency eigenvalues $\lambda_1 \ge \dots \ge \lambda_n$ and normalized Perron eigenvector~$\vecnu$. Let $\lambda = \max\{\abs{\lambda_2},\abs{\lambda_n}\}$,~$p\in\mathbb{R}_k[x]$ and $\lambda(p),\Lambda(p)$ as defined above. For subsets $S, T\subseteq V$, define $P(S,T)\coloneqq\sum_{i\in S,j\in T}p(A)_{ij}$. Then
\begin{enumerate}[label=(\roman*)]
    \item $-\Lambda(p)(|S|-\langle\vecchi_S,\vecnu\rangle^2)\le  p(\lambda_1)\langle\vecchi_S,\vecnu\rangle^2 - P(S,S) \le -\lambda(p) \left(\abs{S} - \langle\vecchi_S,\vecnu\rangle^2\right);$\label{th:generalPolyEMLi}
    \item if~$p(A)$ has nonnegative entries,~$|p(\lambda)|\ge |p(\lambda_i)|$ for all~$i \ge 2$ and~$p(\lambda_1) > |p(\lambda)|$, then for any $S,T\subseteq V(G)$, we have
\begin{align*}
\left|P(S,T) - p(\lambda_1)\langle\vecchi_S,\vecnu\rangle \langle\vecchi_T,\vecnu\rangle\right| &\leq |p(\lambda)| \sqrt{(\abs{S}-\langle\vecchi_S,\vecnu\rangle^2)(|T|-\langle\vecchi_T,\vecnu\rangle^2)}\\&< |p(\lambda)|\sqrt{|S||T|}.
\end{align*}\label{th:generalPolyEMLii}
\end{enumerate}
\label{th:generalPolyEML}
\end{theorem}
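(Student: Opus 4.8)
The plan is to work directly from the spectral decomposition of $A$ and treat both parts through a single expansion, since nothing here really needs the general matrix machinery of Theorem~\ref{th:superEML}. Let $\vecv_1=\vecnu,\vecv_2,\dots,\vecv_n$ be an orthonormal basis of eigenvectors with $A\vecv_i=\lambda_i\vecv_i$, so that $p(A)=\sum_{i=1}^n p(\lambda_i)\vecv_i\vecv_i^{\top}$. Because $\vecchi_S$ is a $0/1$ vector, Parseval gives $\sum_{i=1}^n\langle\vecchi_S,\vecv_i\rangle^2=\|\vecchi_S\|^2=|S|$, whence
\[\sum_{i=2}^n\langle\vecchi_S,\vecv_i\rangle^2=|S|-\langle\vecchi_S,\vecnu\rangle^2,\]
and likewise for $T$. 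Writing $P(S,T)=\langle\vecchi_S,p(A)\vecchi_T\rangle$ and peeling off the $i=1$ term, which contributes exactly $p(\lambda_1)\langle\vecchi_S,\vecnu\rangle\langle\vecchi_T,\vecnu\rangle$, the problem reduces to controlling the error term
\[E(S,T)\coloneqq\sum_{i=2}^n p(\lambda_i)\langle\vecchi_S,\vecv_i\rangle\langle\vecchi_T,\vecv_i\rangle=P(S,T)-p(\lambda_1)\langle\vecchi_S,\vecnu\rangle\langle\vecchi_T,\vecnu\rangle.\]

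For part~\ref{th:generalPolyEMLi} I would set $T=S$, so that $E(S,S)=\sum_{i=2}^n p(\lambda_i)\langle\vecchi_S,\vecv_i\rangle^2$ is a nonnegatively weighted combination of the values $p(\lambda_i)$, $i\ge 2$. Since by definition $\lambda(p)\le p(\lambda_i)\le\Lambda(p)$ for every $i\ge 2$ and each coefficient $\langle\vecchi_S,\vecv_i\rangle^2$ is nonnegative, the Parseval identity for $\sum_{i\ge 2}\langle\vecchi_S,\vecv_i\rangle^2$ sandwiches
\[\lambda(p)\big(|S|-\langle\vecchi_S,\vecnu\rangle^2\big)\le E(S,S)\le\Lambda(p)\big(|S|-\langle\vecchi_S,\vecnu\rangle^2\big).\]
Negating and recalling $E(S,S)=P(S,S)-p(\lambda_1)\langle\vecchi_S,\vecnu\rangle^2$ yields exactly the claimed two-sided bound. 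I would emphasize that this part needs no hypothesis on the sign of the entries of $p(A)$: it is a purely spectral, interlacing-type estimate.

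For part~\ref{th:generalPolyEMLii} I would bound $|E(S,T)|$ by the triangle inequality and Cauchy--Schwarz. Using $|p(\lambda_i)|\le|p(\lambda)|$ for all $i\ge 2$ (the standing assumption) and then Cauchy--Schwarz on the families $(\langle\vecchi_S,\vecv_i\rangle)_{i\ge 2}$ and $(\langle\vecchi_T,\vecv_i\rangle)_{i\ge 2}$,
\[|E(S,T)|\le|p(\lambda)|\Big(\sum_{i=2}^n\langle\vecchi_S,\vecv_i\rangle^2\Big)^{1/2}\Big(\sum_{i=2}^n\langle\vecchi_T,\vecv_i\rangle^2\Big)^{1/2}=|p(\lambda)|\sqrt{(|S|-\langle\vecchi_S,\vecnu\rangle^2)(|T|-\langle\vecchi_T,\vecnu\rangle^2)},\]
which is the first inequality. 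The strict second inequality then follows from $\langle\vecchi_S,\vecnu\rangle^2>0$, valid because $\vecnu$ is strictly positive by Perron--Frobenius and $S$ is nonempty, so $|S|-\langle\vecchi_S,\vecnu\rangle^2<|S|$. The assumptions that $p(A)$ is nonnegative and $p(\lambda_1)>|p(\lambda)|$ are precisely what make $p(\lambda_1)$ the Perron value of $p(A)$; alternatively, this part deduces from Theorem~\ref{th:superEML} with $B=p(A)$ and $\vecx=\vecy=\vecnu$, giving $R=C=p(\lambda_1)I$ and $\sigma_2(R^{-1/2}p(A)C^{-1/2})=\max_{i\ge 2}|p(\lambda_i)|/p(\lambda_1)\le|p(\lambda)|/p(\lambda_1)$.

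The computations are all routine; I do not anticipate a genuine obstacle. The only points needing care are the bookkeeping of the summation ranges (the full $\{1,\dots,n\}$ for Parseval versus $\{2,\dots,n\}$ for the error term $E(S,T)$) and making sure the two-sided estimate in part~\ref{th:generalPolyEMLi} keeps the inequality directions straight after the final negation.
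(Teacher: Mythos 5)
Your proof is correct. For part~\ref{th:generalPolyEMLi} it coincides with the paper's own argument: expand $\vecchi_S$ in an orthonormal eigenbasis $\vecv_1=\vecnu,\vecv_2,\dots,\vecv_n$ of $A$ (hence of $p(A)$), peel off the Perron term, and sandwich the tail $\sum_{i\ge 2}p(\lambda_i)\langle\vecchi_S,\vecv_i\rangle^2$ between $\lambda(p)$ and $\Lambda(p)$ times $|S|-\langle\vecchi_S,\vecnu\rangle^2$; your bookkeeping of the final negation matches the stated two-sided bound. For part~\ref{th:generalPolyEMLii} you genuinely diverge: the paper disposes of it in one line by invoking Theorem~\ref{th:superEML} with $B=p(A)$ and $\vecx=\vecy=\vecnu$ (your ``alternative'' route, with $R=C=p(\lambda_1)I$ and $\sigma_2=\max_{i\ge2}|p(\lambda_i)|/p(\lambda_1)$ correctly identified), whereas your primary argument bounds the error term $E(S,T)=\sum_{i\ge 2}p(\lambda_i)\langle\vecchi_S,\vecv_i\rangle\langle\vecchi_T,\vecv_i\rangle$ directly by Cauchy--Schwarz. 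The two are close in spirit---your computation is the symmetric specialization of the singular-value argument inside the proof of Theorem~\ref{th:superEML}---but the direct route buys something real: it is self-contained and shows that the first inequality in \ref{th:generalPolyEMLii} uses only the hypothesis $|p(\lambda)|\ge|p(\lambda_i)|$ for $i\ge 2$; the nonnegativity of $p(A)$ and the condition $p(\lambda_1)>|p(\lambda)|$ are needed only to fit the statement into the framework of Theorem~\ref{th:superEML}, where nonnegativity and Perron--Frobenius are what force $\sigma_1=1$. What the paper's route buys is brevity and consistency with its unifying theme. One pedantic caveat, which applies to the paper's statement as much as to your proof: the final strict inequality requires $S$ and $T$ nonempty (and $p(\lambda)\neq 0$), exactly the point you address via strict positivity of $\vecnu$.
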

\begin{proof}
For Claim \ref{th:generalPolyEMLi}, let $\vecx_1=\vecnu, \vecx_2, \dots, \vecx_n$ be an orthonormal basis of eigenvectors for $A$. This is also an orthonormal basis of eigenvectors for $p(A)$ with corresponding eigenvalues $p(\lambda_1),p(\lambda_2),\dots,p(\lambda_n)$ (note however that these may not be in nonincreasing order like we normally assume). Let~$\alpha_i = \langle \vecchi_S,\vecnu\rangle$, then
\begin{align*}
    P(S,S) &= \left(\sum_{i=1}^n\alpha_i \vecx_i\right)^{\top} p(A) \left(\sum_{i=1}^n\alpha_i \vecx_i\right)\\
    &= \sum_{i=1}^n\alpha_i^2 p(\lambda_i)\\
    &= \langle \vecchi_S,\vecnu\rangle^2 p(\lambda_1) + \sum_{i=2}^n\alpha_i^2 p(\lambda_i), 
\end{align*}
and hence
\[p(\lambda_1)\langle \vecchi_S,\vecnu\rangle^2 - P(S,S) = - \sum_{i=2}^n\alpha_i^2 p(\lambda_i) \le -\lambda(p) \sum_{i=2}^n\alpha_i^2 = -\lambda(p)(|S|-\langle \vecchi_S,\vecnu\rangle^2).\]
The lower bound follows analogously.

Claim \ref{th:generalPolyEMLii} can be obtained directly from Theorem~\ref{th:superEML} by choosing~$\vecx=\vecy=\vecnu$ and~$B=p(A)$.
\end{proof}
By choosing~$p(x) = x^{(k)}$, we get the following generalization of Theorem~\ref{th:kEML} for irregular graphs. 

\begin{corollary}
Let $G=(V,E)$ be a graph on~$n$ vertices with adjacency eigenvalues $\lambda_1 \ge \dots \ge \lambda_n$ and normalized Perron eigenvector~$\vecnu$. Let $\lambda = \max\{\abs{\lambda_2},\abs{\lambda_n}\}$ and $\lambda^{(k)}$ as defined above. For $S, T\subseteq V$, let $W_k(S,T)$ be the number of walks of length at most $k$ with one endpoint in $S$ and one endpoint in $T$. Then for any $S,T\subseteq V$, we have
{\small{
\begin{equation*}
\left|W_k(S,T) - \lambda_1^{(k)}\langle\vecchi_S,\vecnu\rangle \langle\vecchi_T,\vecnu\rangle\right| \leq \lambda^{(k)} \sqrt{(\abs{S}-\langle\vecchi_S,\vecnu\rangle^2)(|T|-\langle\vecchi_T,\vecnu\rangle^2)} < \lambda_k^{(k)} \sqrt{|S||T|}.
\end{equation*}
}}
\label{cor:irregWalkEML}
\end{corollary}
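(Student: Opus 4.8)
The plan is to specialize Theorem~\ref{th:generalPolyEML}\ref{th:generalPolyEMLii} to the polynomial $p(x) = x^{(k)} = x + x^2 + \cdots + x^k$, so almost all of the work has already been done. The key observation is that the walk count $W_k(S,T)$ is exactly $P(S,T)$ for this choice of $p$: the number of walks of length at most $k$ with one endpoint in $S$ and one in $T$ equals $\sum_{i\in S, j\in T}(A + A^2 + \cdots + A^k)_{ij} = \sum_{i\in S, j\in T}p(A)_{ij}$. Thus the left-hand side of Theorem~\ref{th:generalPolyEML}\ref{th:generalPolyEMLii} matches the left-hand side of the corollary once we verify that $p(\lambda_1) = \lambda_1^{(k)}$ and $|p(\lambda)| = \lambda^{(k)}$, which follow directly from the definitions $\lambda_1^{(k)} = \lambda_1 + \cdots + \lambda_1^k$ and $\lambda^{(k)} = \lambda + \cdots + \lambda^k$.

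The remaining task is to check that the three hypotheses of Theorem~\ref{th:generalPolyEML}\ref{th:generalPolyEMLii} hold for $p(x) = x^{(k)}$. First, $p(A) = A + A^2 + \cdots + A^k$ has nonnegative entries, since each power $A^j$ counts walks of length $j$ and is therefore entrywise nonnegative. Second, I would need $|p(\lambda)| \ge |p(\lambda_i)|$ for all $i \ge 2$; this requires care because the even powers can make $p$ behave differently on negative arguments. The cleanest route is to observe that $\lambda = \max\{|\lambda_2|, |\lambda_n|\}$ and argue that among the non-Perron eigenvalues, the polynomial $x^{(k)}$ attains the largest value of $|p|$ at the eigenvalue of largest absolute value; I expect this to hold but it is the step most in need of verification, since for general $k$ one must confirm that $p$ is monotone in absolute value on $[-\lambda, \lambda]$ restricted to the relevant eigenvalues, or simply invoke the bound $|p(\lambda_i)| \le \lambda^{(k)}$ directly. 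Third, $p(\lambda_1) > |p(\lambda)|$ follows from $\lambda_1 > \lambda$ (a consequence of Perron--Frobenius for connected graphs) together with strict monotonicity of $x^{(k)}$ at the top of the spectrum.

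The main obstacle I anticipate is precisely the second hypothesis: establishing $|p(\lambda)| \ge |p(\lambda_i)|$ uniformly over $i \ge 2$ when $k$ is arbitrary. For odd $k$ this is immediate from monotonicity of $x^{(k)}$, but for even $k$ the term $x^k$ is even while $x$ is odd, so one cannot simply say $|p|$ is maximized at the largest $|\lambda_i|$ without a short argument comparing $p$ at $\lambda_2$ and at $\lambda_n$. My plan would be to handle this by a direct estimate showing $|p(\lambda_i)| \le p(|\lambda_i|) \le p(\lambda) = \lambda^{(k)}$ whenever $p$ has nonnegative coefficients (which $x^{(k)}$ does), using that $|x + x^2 + \cdots + x^k| \le |x| + |x|^2 + \cdots + |x|^k$. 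This bound, combined with $|\lambda_i| \le \lambda$ for $i \ge 2$ and monotonicity of $t \mapsto t + \cdots + t^k$ on $[0,\infty)$, gives $|p(\lambda_i)| \le \lambda^{(k)} = |p(\lambda)|$, which is exactly what is required. Once the hypotheses are confirmed, the corollary is just the instantiation of Theorem~\ref{th:generalPolyEML}\ref{th:generalPolyEMLii}, with the final strict inequality $\langle\vecchi_S,\vecnu\rangle^2 \ge 0$ and $\langle\vecchi_T,\vecnu\rangle^2 \ge 0$ giving the looser bound $\lambda^{(k)}\sqrt{|S||T|}$.
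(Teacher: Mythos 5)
Your proposal is correct and takes essentially the same route as the paper: the paper's entire proof is the one-line instantiation of Theorem~\ref{th:generalPolyEML}\ref{th:generalPolyEMLii} with $p(x)=x^{(k)}$, exactly as you propose. Your verification of the hypotheses (nonnegativity of $p(A)$, and $|p(\lambda_i)|\le p(|\lambda_i|)\le p(\lambda)=\lambda^{(k)}$ via nonnegative coefficients and monotonicity) simply fills in details the paper leaves implicit, and your handling of the even-$k$ concern is the right argument.
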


\section{Applications}\label{sec:applications}

In this section, we illustrate some applications of the new Expander Mixing Lemmas 
by deriving new bounds on several NP-hard graph parameters: the zero forcing number, the routing number and the $k$-independence number. Moreover, our lower bounds on the zero forcing number also lower bound the vertex integrity of a graph. For fairness of comparison, we only consider expander mixing lemmas for irregular graphs which are based on adjacency eigenvalues: Theorem~\ref{th:strongVolEML}, Theorem~\ref{th:PerronEML} and Theorem~\ref{th:weightedEML}. We will not use Lemma~\ref{lem:taitEML}, as we want the bounds in this section to also hold for graphs which do not satisfy the condition~$\lambda \ll \overline{d}$. The effectiveness of the zero forcing bounds depends on the principal ratio of the graph, which we briefly discuss at the end of Section~\ref{sec:prinRat}. 

\subsection{Zero forcing number and vertex integrity}
\label{sec:zeroForcing}

Zero forcing is a propagation process on a graph where an initial subset of vertices is
colored black, while the others are colored white. A black vertex~$v$ colors (\emph{forces}) a white neighbor~$w$ black if~$w$ is the only white neighbor of~$v$. The \textit{zero forcing number} of a graph~$G$, denoted by~$Z(G)$, is the cardinality of a smallest subset of initial black vertices which colors the entire graph black.

Many NP-hard graph parameters can be approximated or bounded using the eigenvalues of the adjacency matrix. As zero forcing has a close relation to the spectrum  through maximum nullity (see for example~\cite{AIM} for more details), it makes sense to search for spectral upper and lower bounds of~$Z(G)$. In particular, we are interested in lower bounds, as these are generally much more difficult to find. Such bounds have previously been studied in~\cite{chen2023},~\cite{kalinowski2019zero} and \cite{zhang2022}. In this section, we prove three spectral lower bounds for irregular graphs and show that there exist infinite families of graphs for which they are tight.

In~\cite{kalinowski2019zero} the following spectral lower bound on the zero forcing number of~$d$-regular graphs was shown. 

\begin{theorem}[{\cite[Theorem 1.4 (i)]{kalinowski2019zero}}]
Let~$G$ be a $d$-regular graph on~$n$ vertices with adjacency eigenvalues~$\lambda_1\ge \dots \ge \lambda_n$ and let~$\lambda = \max\{\abs{\lambda_2},\abs{\lambda_n}\}$. Then
\[Z(G) \ge n\left(1-\frac{2\lambda}{d+\lambda}\right).\]\label{th:spectralRegCorrected}
\end{theorem}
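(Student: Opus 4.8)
The plan is to pass to the complementary \emph{white} set and show it is small. Let $B$ be a minimum zero forcing set, write $W = V \setminus B$ and $w = |W| = n - Z(G)$. Since the claimed inequality $Z(G) \ge n\left(1-\frac{2\lambda}{d+\lambda}\right)$ rearranges to $w \le \frac{2\lambda n}{d+\lambda}$, it suffices to bound $w$ from above.

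First I would extract the combinatorial skeleton of the forcing process. Running the process from $B$ one force at a time produces an ordering $x_1, x_2, \dots, x_w$ of $W$, where $x_s$ is the $s$-th vertex to turn black, together with its forcer $y_s$. The forcers are pairwise distinct, since a vertex forces at most once (its neighbourhood only loses white vertices over time). Moreover, at the instant $x_s$ is forced the still-white vertices are exactly $\{x_s, x_{s+1}, \dots, x_w\}$, of which $y_s$ has precisely one neighbour, namely $x_s$; hence $y_s \not\sim x_t$ for every $t > s$.

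The key step is then the following observation. Fix $k \in \{1,\dots,w-1\}$ and set $Y_k = \{y_1,\dots,y_k\}$ and $W_{>k} = \{x_{k+1},\dots,x_w\}$. These sets are disjoint (each $y_s$ with $s \le k$ is already black at step $s$, hence lies in $B \cup \{x_1,\dots,x_{k-1}\}$), they have sizes $k$ and $w-k$, and there are \emph{no} edges between them: for $s \le k$ we have $W_{>k} \subseteq \{x_{s+1},\dots,x_w\}$, so $y_s$ has no neighbour in $W_{>k}$, giving $e(Y_k, W_{>k}) = 0$. Applying the Expander Mixing Lemma (Theorem~\ref{th:emlOriginal}) to the disjoint pair $S = W_{>k}$, $T = Y_k$ yields $\frac{d\,k(w-k)}{n} \le \lambda \sqrt{k(w-k)\bigl(1-\tfrac{w-k}{n}\bigr)\bigl(1-\tfrac{k}{n}\bigr)}$, which after squaring and clearing $k(w-k)$ simplifies to $d^2\,k(w-k) \le \lambda^2\,(n-(w-k))(n-k)$.

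Finally I would optimise over $k$. For fixed $w$ the left-hand side is largest and the right-hand side smallest when the split is balanced, so choosing $k$ as close as possible to $w/2$ gives the strongest constraint; with $k = w/2$ the inequality becomes $d\cdot\tfrac{w}{2} \le \lambda\bigl(n - \tfrac{w}{2}\bigr)$, i.e. $(d+\lambda)w \le 2\lambda n$, which is exactly $w \le \frac{2\lambda n}{d+\lambda}$ and hence the theorem. I expect the main obstacle to be pinning down the ``no edges between $Y_k$ and $W_{>k}$'' lemma cleanly: this is where the local defining property of a force (a unique white neighbour) has to be upgraded to a global statement about an entire initial segment of forcers, after which the Expander Mixing Lemma does all the quantitative work. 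A secondary technical point is the bookkeeping needed to handle the parity of $w$ in the balancing step, so that the exact constant $\frac{2\lambda}{d+\lambda}$ is recovered rather than an $O(1)$ approximation of it.
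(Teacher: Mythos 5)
Your argument is correct, and it is essentially the original Kalinowski--Kam\v{c}ev--Sudakov proof; note that the paper itself never proves Theorem~\ref{th:spectralRegCorrected} directly (it is quoted from \cite{kalinowski2019zero}), and the closest thing to ``the paper's proof'' is Theorem~\ref{th:ZIrregular}, whose specialization to $d$-regular graphs recovers this statement. The quantitative skeleton of the two arguments is identical --- two disjoint sets with no edges between them, an Expander Mixing Lemma, and a balanced split --- but the combinatorial step is packaged differently. You extract the edge-free pair straight from the forcing chronology: forcers are distinct, and the forcer $y_s$ has no neighbour among the later white vertices, giving $e(Y_k, W_{>k})=0$. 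The paper instead routes through Grundy domination: it invokes $Z(G)\ge n-\gamma_{gr}(G)$ from \cite{BRESAR2017} and splits a maximum Grundy dominating sequence into its first half of dominators and second half of footprinted vertices. These are the same sets in disguise --- listing the forcers in chronological order, with $y_s$ footprinting $x_s$, is exactly how the Grundy bound is proved --- so your version is the self-contained, more elementary one, while the paper's factorization buys generality: the same skeleton fed into Theorems~\ref{th:strongVolEML}, \ref{th:PerronEML} and~\ref{th:weightedEML} yields the three irregular bounds of Theorem~\ref{th:ZIrregular}, and transfers verbatim to vertex integrity (Theorem~\ref{th:integritybounds}).

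Two caveats. First, your justification for balancing is not literally right: the right-hand side $(n-k)\bigl(n-(w-k)\bigr)=n(n-w)+k(w-k)$ is also \emph{maximized} at $k=w/2$, not minimized. The conclusion survives because the constraint rearranges to $(d^2-\lambda^2)\,k(w-k)\le \lambda^2 n(n-w)$, which for $\lambda<d$ (if $\lambda=d$ the theorem is vacuous) is monotone in $k(w-k)$, so the balanced split is indeed optimal. Second, the parity issue you flag is genuine: for odd $w$ the best integral split only gives $(d^2-\lambda^2)(w^2-1)/4\le\lambda^2 n(n-w)$, which is slightly weaker than what the exact constant requires, and I do not see how your outline closes that gap. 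You are in good company, though: the paper's own proof of Theorem~\ref{th:ZIrregular} has exactly the same slack in its odd case (there $k=2\mu n+1$, so it only yields $Z(G)\ge n(1-2\mu)-1$), which it passes over with ``the remainder of the proof is identical to the first case.''
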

Theorem~\ref{th:spectralRegCorrected} only holds for regular graphs. We will combine its proof idea with an application of Theorem \ref{th:strongVolEML}, Theorem~\ref{th:PerronEML} and Theorem~\ref{th:weightedEML} to obtain three new lower bounds on the zero forcing number that extend Theorem~\ref{th:spectralRegCorrected} to general graphs. Before we can state our main results of this section, we need some preliminary definitions.

A vertex~$v$ \textit{dominates}~$w$ if~$w\in G[v]$. A sequence~$(v_1,v_2,\dots,v_\ell)$ of vertices in~$V$ is \emph{dominating} for~$G$ if all~$w\in V$ are dominated by some~$v_i$. If additionally for each~$i$ it holds that 
\[G[v_i]\setminus\bigcup_{j=1}^{i-1}G[v_j]\neq \emptyset,\]
i.e., no vertex in the sequence is redundant,~$(v_1,v_2,\dots,v_\ell)$ is called a \emph{Grundy dominating sequence}. A vertex~$w$ is \emph{footprinted} by~$v_i$ if~$i$ is the smallest integer such that~$v_i$ dominates~$w$. The \emph{Grundy domination number} of~$G$, denoted by~$\gamma_{gr}(G)$, is defined as the maximum length of a Grundy dominating sequence. It was shown in~\cite{BRESAR2017} that for any graph without isolated vertices,~$Z(G) \ge |V| - \gamma_{gr}(G)$.

\begin{theorem}
	Let~$G$ be a graph on~$n$ vertices with minimum and maximum degree~$\delta, \Delta$, adjacency eigenvalues~$\lambda_1\ge \dots \ge \lambda_n$, normalized adjacency eigenvalues $\sigma_1\ge \dots \ge \sigma_n$ and  normalized Perron eigenvector~$\vecnu$. Let~$\nu_{\min} = \min \nu_i,\ \nu_{\max} = \max \nu_i$,~$\lambda = \max\{\abs{\lambda_2},\abs{\lambda_n}\}$ and~$\sigma = \max\{\abs{\sigma_2},\abs{\sigma_n}\}$. Then
	\begin{enumerate}[label=(\roman*)]
		\item $Z(G) \ge n\left(1-\frac{2\sigma\Delta}{\delta\left(\delta/\Delta+\sigma\right)}\right)$;\label{th:ZIrregular1}
        \item $Z(G) \ge n\left(1-\frac{2\lambda}{n(\lambda+\lambda_1)\nu_{\min}^2}\right)$;\label{th:ZIrregular3}
        \item $Z(G) \ge n\left( 1-\frac{2\lambda}{\lambda_1(\nu_{\min}/\nu_{\max})^4 + \lambda}\right)$.\label{th:ZIrregular2}
	\end{enumerate}
	
	\label{th:ZIrregular}
\end{theorem}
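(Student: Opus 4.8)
The plan is to bound the Grundy domination number and invoke $Z(G) \ge n - \gamma_{gr}(G)$ from \cite{BRESAR2017}, so that an upper bound on $\gamma_{gr}(G)$ yields the desired lower bounds on $Z(G)$. Adapting the idea behind Theorem~\ref{th:spectralRegCorrected}, I would fix a maximum Grundy dominating sequence $(v_1,\dots,v_\ell)$ with $\ell=\gamma_{gr}(G)$ and, for each $i$, let $w_i$ be a vertex footprinted by $v_i$, i.e.\ $w_i\in G[v_i]$ but $w_i\notin G[v_j]$ for all $j<i$. The sequence vertices are pairwise distinct, the footprints $w_1,\dots,w_\ell$ are pairwise distinct, and the decisive structural fact is that whenever $i<j$ the vertex $v_i$ fails to dominate $w_j$, so $v_i\neq w_j$ and $v_i\not\sim w_j$. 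I expect extracting this footprinting property correctly to be the conceptual heart of the argument; everything afterwards is estimation.

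Using it, for a split point $k$ I would set $A=\{v_1,\dots,v_k\}$ and $B=\{w_{k+1},\dots,w_\ell\}$. By the observation above $A$ and $B$ are disjoint and have no edges between them, so $e(A,B)=0$ (equivalently $\tilde{b}^{*}_{AB}=0$). This single combinatorial fact feeds all three bounds: in each of Theorem~\ref{th:strongVolEML}, Theorem~\ref{th:PerronEML} and Theorem~\ref{th:weightedEML} the left-hand quantity involving $e(A,B)$ (or $\tilde{b}^{*}_{AB}$) vanishes, turning the mixing inequality into an upper bound on $\sqrt{|A||B|}$.

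Concretely, for (ii) I would apply Theorem~\ref{th:PerronEML}: from $e(A,B)=0$ one gets $\lambda_1 \langle \vecchi_A,\vecnu\rangle\langle \vecchi_B,\vecnu\rangle \le \lambda\sqrt{(|A|-\langle \vecchi_A,\vecnu\rangle^2)(|B|-\langle \vecchi_B,\vecnu\rangle^2)}$; using $\langle \vecchi_A,\vecnu\rangle \ge |A|\nu_{\min}$ and $|A|-\langle \vecchi_A,\vecnu\rangle^2 \le |A|(1-|A|\nu_{\min}^2)$ (and likewise for $B$) and dividing by $\sqrt{|A||B|}$ gives $\lambda_1\sqrt{|A||B|}\,\nu_{\min}^2 \le \lambda\sqrt{(1-|A|\nu_{\min}^2)(1-|B|\nu_{\min}^2)}$. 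For (i) I would instead use Theorem~\ref{th:strongVolEML} with $\vol{A}\ge \delta|A|$, $\vol{A}\le \Delta|A|$ and $\vol{V}\le \Delta n$, estimating the main term from below and the error term from above; for (iii) I would use Theorem~\ref{th:weightedEML}, bounding $\|\rho(A)\|^2\ge |A|\nu_{\min}^2$ on the left and every factor $\|\rho(\cdot)\|\le \sqrt{|\cdot|}\,\nu_{\max}$ on the right (here $\overline A,\overline B$ are complements in $V$, so $|\overline A|=n-|A|$). The deliberately asymmetric estimates, namely $\delta$ and $\nu_{\min}$ for lower bounds versus $\Delta$ and $\nu_{\max}$ for upper bounds, are exactly what produce the factor $\delta/\Delta$ in (i) and $(\nu_{\min}/\nu_{\max})^4$ in (iii), whereas the tighter handling available in the Perron version keeps (ii) sharp.

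Finally, since the mixing inequality holds for every $k$, I would choose the split point $k=\ell/2$, which maximizes $|A||B|$ and hence makes the constraint tightest. Solving the resulting one-variable inequality for $\ell$ yields $\ell=\gamma_{gr}(G)\le \frac{2\lambda}{\nu_{\min}^2(\lambda_1+\lambda)}$ for (ii), and analogously $\frac{2\sigma\Delta n}{\delta(\delta/\Delta+\sigma)}$ for (i) and $\frac{2\lambda n}{\lambda_1(\nu_{\min}/\nu_{\max})^4+\lambda}$ for (iii); substituting into $Z(G)\ge n-\gamma_{gr}(G)$ gives the three stated inequalities. The only remaining nuisance is the parity of $\ell$, since the midpoint split is integral only when $\ell$ is even, which I would handle by taking $|A|=|B|=\lfloor \ell/2\rfloor$ and absorbing the lower-order term; this, together with the routine algebra of solving each quadratic-type inequality, is the least interesting part of the proof.
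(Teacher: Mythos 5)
Your proposal matches the paper's proof essentially step for step: the paper likewise invokes $Z(G)\ge n-\gamma_{gr}(G)$, splits a maximum Grundy dominating sequence at its midpoint, uses the footprinting property to obtain disjoint sets $S$, $T$ with $e(S,T)=0$ (equivalently $\tilde{b}^{*}_{ST}=0$), and feeds this into Theorems~\ref{th:strongVolEML}, \ref{th:PerronEML} and~\ref{th:weightedEML} with exactly the asymmetric $\delta$ versus $\Delta$ and $\nu_{\min}$ versus $\nu_{\max}$ estimates you describe, then solves for the half-length. The only cosmetic difference is the parity handling: where you shrink both halves to $\lfloor \ell/2\rfloor$, the paper keeps the uneven split ($|S|=\mu n$, $|T|=\mu n+1$) and absorbs the extra factor via $\sqrt{\mu n(\mu n+1)}\ge \mu n$, which carries the same lower-order slack as your version.
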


\begin{proof}
	Let~$v_1,\dots,v_k$ be a maximum Grundy dominating sequence of~$G$ which footprints vertices~$w_1,\dots,w_k$. We will prove the case of even and odd~$k$ separately.
\begin{description}
\item[Case $k$ even.] Let~$k=2\mu n$. By definition, there are no edges between~$S\coloneqq\{v_1,\dots,v_{\mu n}\}$ and~$T\coloneqq\{w_{\mu n + 1},\dots, w_{2\mu n}\}$. Moreover,~$S$ and~$T$ are disjoint. Theorem~\ref{th:strongVolEML} then implies that
	\begin{align*}
		0 &= e(S,T) \\
    &\ge \frac{\vol{S}\vol{T}}{\vol{V}} -\sigma \sqrt{\vol{S}\vol{T}\left(1-\frac{\vol{S}}{\vol{V}}\right)\left(1-\frac{\vol{T}}{\vol{V}}\right)}\\
		&\ge \mu^2 n \frac{\delta^2}{\Delta} - \sigma\mu n\Delta \left(1-\frac{\delta\mu}{\Delta}\right)\\ 
		&= \mu n\left( \mu\frac{\delta^2}{\Delta} -\sigma\Delta\left(1-\frac{\delta\mu}{\Delta}\right)\right),
	\end{align*}
	from which we obtain
	\[\mu \le \frac{\sigma\Delta}{\delta(\delta / \Delta + \sigma)}.\]
	Therefore, 
	\[Z(G) \ge n-k \ge n\left(1-\frac{2\sigma\Delta}{\delta\left(\delta / \Delta + \sigma\right)}\right),\]
	proving Claim \ref{th:ZIrregular1}. To prove \ref{th:ZIrregular3} and \ref{th:ZIrregular2}, we replace Theorem~\ref{th:strongVolEML}  by Theorem~\ref{th:PerronEML} and Theorem~\ref{th:weightedEML}, respectively. This gives 
 \begin{align*}0 &= e(S,T) \ge \lambda_1\langle \vecchi_S,\vecnu\rangle \langle \vecchi_T,\vecnu\rangle -\lambda\sqrt{(|S|-\langle \vecchi_S,\vecnu\rangle^2)(|T|-\langle \vecchi_T,\vecnu\rangle^2)}\\
 &\ge \lambda_1 \nu_{\min}^2\mu^2n^2-\lambda(\mu n - \mu^2 n^2 \nu_{\min}^2)\\
 &= \mu n\left( \lambda_1 \nu_{\min}^2\mu n - \lambda(1-\mu n \nu_{\min}^2) \right)
    \end{align*}
 and 
 \begin{align*}0 &= \tilde{b}^*_{ST} \ge \lambda_1 \norm{\rho(S)}{}^2\norm{\rho(T)}{}^2-\lambda \norm{\rho(S)}{}\norm{\rho(T)}{}\norm{\rho(\overline{S})}{}\norm{\rho(\overline{T})}{}\\
    &\ge \lambda_1 |S|\nu_{\min}^2 |T|\nu_{\min}^2-\lambda \sqrt{\nu_{\max}^2|S| \nu_{\max}^2|T| \nu_{\max}^2|\overline{S}| \nu_{\max}^2|\overline{T}|}\\
    &= \lambda_1 \mu^2n^2\nu_{\min}^4-\lambda \nu_{\max}^4\sqrt{\mu^2n^2(n-\mu n)^2}\\
    &= \mu n^2\left( \lambda_1 \mu  \nu_{\min}^4 - \lambda \nu_{\max}^4(1-\mu)\right).
    \end{align*}
    The remainder of the proof is analogous to~\ref{th:ZIrregular1}.
\item[Case $k$ odd.] Let~$k=2\mu n+1$. There are no edges between~$S\coloneqq\{v_1,\dots,v_{\mu n}\}$ and~$T\coloneqq\{w_{\mu n + 1},\dots, w_{2\mu n+1}\}$ and~$S$ and~$T$ are disjoint. Theorem~\ref{th:strongVolEML} then implies that
   \begin{align*}
		0 &= e(S,T)\\
  &\ge \frac{\vol{S}\vol{T}}{\vol{V}} -\sigma \sqrt{\vol{S}\vol{T}\left(1-\frac{\vol{S}}{\vol{V}}\right)\left(1-\frac{\vol{T}}{\vol{V}}\right)}\\
		&\ge \frac{\delta^2\mu n(\mu n + 1)}{\Delta n} - \sigma\Delta\sqrt{\mu n (\mu n + 1) \left(1-\frac{\delta\mu n}{\Delta n}\right) \left(1-\frac{\delta(\mu n + 1)}{\Delta n}\right)}\\ 
        &\ge \frac{\delta^2\mu n(\mu n + 1)}{\Delta n} - \sigma\Delta \left(1-\frac{\delta\mu n}{\Delta n}\right)\sqrt{\mu n (\mu n + 1) }\\
        &\ge \sqrt{\mu n (\mu n + 1) } \left( \frac{\delta^2\sqrt{\mu n(\mu n + 1)}}{\Delta n} - \sigma\Delta \left(1-\frac{\delta\mu}{\Delta}\right)\right),
	\end{align*}
    from which we obtain
	\[0 \ge \frac{\delta^2\sqrt{\mu n(\mu n + 1)}}{\Delta n} - \sigma\Delta \left(1-\frac{\delta\mu}{\Delta}\right)\ge \frac{\delta^2\mu n}{\Delta n} - \sigma\Delta \left(1-\frac{\delta\mu}{\Delta}\right).\]
    The remainder of the proof is identical to the first case.
\qedhere\end{description}
\end{proof}

For~$d$-regular graphs, all three bounds in Theorem~\ref{th:ZIrregular} imply Theorem~\ref{th:spectralRegCorrected}, hence they all generalize Theorem~\ref{th:spectralRegCorrected}, the known result for regular graphs from~\cite{kalinowski2019zero}.

Note that if~$\Delta = k\delta$, we need~$1/(2k^2-k) > \sigma$ in order for Theorem~\ref{th:ZIrregular}\ref{th:ZIrregular1} to be positive. To avoid putting strong restrictions on~$\sigma$, it therefore makes sense to look at `almost' regular graphs, i.e., graphs with~$\Delta = \delta+O(1)$. A family of such graphs for which the bound is asymptotically tight are the complete graphs~$K_n$ with one edge removed. These have parameters~$\Delta = \delta+1 = n-1$,~$\sigma=2/(n+1)$ and the zero forcing number is~$n-2$. Theorem~\ref{th:ZIrregular}\ref{th:ZIrregular1} implies
\[Z(G) \ge n\left(1-\frac{4(n-1)}{n(n-2)} \right)\approx n-4\]
for large~$n$, which is asymptotically tight (note that the tight examples for Theorem~\ref{th:spectralRegCorrected} in~\cite{kalinowski2019zero} are also off by a constant 2). The same asymptotic bound is achieved by Theorem~\ref{th:ZIrregular}\ref{th:ZIrregular3} and~\ref{th:ZIrregular2}. 

We can apply a similar analysis to Theorem \ref{th:ZIrregular}\ref{th:ZIrregular3} and \ref{th:ZIrregular2}. For these bounds to be nontrivial, we need
\[\lambda_1\left(\frac{\nu_{\min}}{\nu_{\max}}\right)^4 > \lambda, \ \ \ n(\lambda_1+\lambda)\nu_{\min}^2>2\lambda.\]
If the graph is close to regular, i.e., $\nu_{\min}, \nu_{\max} \approx\frac{1}{\sqrt{n}}$, these conditions reduce to~$\lambda_1 > \lambda$, which is satisfied for all non-bipartite graphs. However, we do not need almost regular graphs for these bounds to perform well, as is shown by a second family of tight examples arising from the well-known cocktail party graphs. The graph~$H_{(n-2)/2}+K_2$, the join of a $K_2$ and a cocktail party graph on~$n-2$ vertices (with $n$ even), has parameters 
\[\lambda_1 = \frac{n-3+\beta}{2}, \ \ \ \lambda = 2,\ \ \ \nu_{\min} = \frac{1}{2}\sqrt{\frac{2}{n-2}\left(1+\frac{n-5}{\beta}\right)},\]
where~$\beta = \sqrt{(n-3)^2+4n}$. Filling this in in the bound of Theorem~\ref{th:ZIrregular}\ref{th:ZIrregular3} gives a somewhat cumbersome expression which converges to~$n-4$. In fact, computing the bound for small~$n$ shows that it is always exactly equal to~$n-4$ when rounded up to an integer. The zero forcing number of $H_{(n-2)/2}+K_2$ is $n-2$, so this is another infinite family for which Theorem~\ref{th:ZIrregular}\ref{th:ZIrregular3} is asymptotically tight (and again, differs by a constant 2). The same holds for Theorem~\ref{th:ZIrregular}\ref{th:ZIrregular1} and~\ref{th:ZIrregular2}, although unlike~\ref{th:ZIrregular3}, these bounds do not give~$n-4$ for very small values of~$n$. Hence Theorem~\ref{th:ZIrregular}\ref{th:ZIrregular3} performs better for this particular graph class.

\begin{remark}
    In \cite{zhang2022}, Zhang et al.\ propose a spectral bound on the zero forcing number that involves only $\lambda_1$. In general, this bound seems to be incomparable to ours. For example, consider the graph~$(K_3\square K_3)+K_2$, where `$\square$' and `$+$' denote the Cartesian product and graph join respectively. This graph's zero forcing number equals five. Theorem~\ref{th:ZIrregular}\ref{th:ZIrregular3} gives a lower bound of four, whereas bound in~\cite{zhang2022} equals two for this graph. On the other hand, there are highly irregular graphs for which~\cite{zhang2022} is better, as all bounds in Theorem~\ref{th:ZIrregular} become negative.
\end{remark}

Let~$G=(V,E)$ be a graph and for any subgraph~$H$, let~$\kappa (H)$ be the largest size of a connected component of~$H$. The \textit{(vertex) integrity} of~$G$ is defined as
\[\iota(G) = \min_{S\subseteq V} (\abs{S} + \kappa(V\setminus S)).\]
It was shown in~\cite{alon2023} that~$\iota(G) \ge n-2x(G)$, with~$x(G)$ the largest integer such that~$G$ contains two disjoint sets of size $x(G)$ with no edges between them. This is essentially the same situation as in the proof of Theorem~\ref{th:ZIrregular}, where~$Z(G) \ge n-2\mu n$ and we apply the Expander Mixing Lemma to sets~$S$ and~$T$ of size~$\mu n$ with~$e(S,T) = 0$. We can therefore adapt the proof of Theorem~\ref{th:ZIrregular} to obtain the following lower bounds on the vertex integrity. However, there does not appear to be a structural connection between this parameter and the zero forcing number.

\begin{theorem}
	Let~$G$ be a graph on~$n$ vertices with minimum and maximum degree~$\delta, \Delta$, adjacency eigenvalues~$\lambda_1\ge \dots \ge \lambda_n$, normalized adjacency eigenvalues $\sigma_1\ge \dots \ge \sigma_n$ and  normalized Perron eigenvector~$\vecnu$. Let~$\nu_{\min} = \min \nu_i,\ \nu_{\max} = \max \nu_i$,~$\lambda = \max\{\abs{\lambda_2},\abs{\lambda_n}\}$ and~$\sigma = \max\{\abs{\sigma_2},\abs{\sigma_n}\}$. Then
	\begin{enumerate}[label=(\roman*)]
		\item $\iota(G) \ge n\left(1-\frac{2\sigma\Delta}{\delta\left(\delta/\Delta+\sigma\right)}\right)$;
        \item $\iota(G) \ge n\left(1-\frac{2\lambda}{n(\lambda+\lambda_1)\nu_{\min}^2}\right)$;
        \item $\iota(G) \ge n\left( 1-\frac{2\lambda}{\lambda_1(\nu_{\min}/\nu_{\max})^4 + \lambda}\right)$.
	\end{enumerate}
	\label{th:integritybounds}
\end{theorem}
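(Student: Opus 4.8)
The plan is to reduce all three bounds to the corresponding computations already performed in the proof of Theorem~\ref{th:ZIrregular}, using the combinatorial inequality $\iota(G) \ge n - 2x(G)$ from~\cite{alon2023}, where $x(G)$ is the largest integer for which $G$ admits two disjoint vertex sets of size $x(G)$ with no edges between them. I would fix such an extremal pair $S, T$ with $|S| = |T| = x(G)$ and $e(S,T) = 0$, and set $\mu = x(G)/n$ so that $|S| = |T| = \mu n$. Since $S$ and $T$ are already disjoint, of equal size, and joined by no edges, this is exactly the configuration arising in the \textbf{even case} of the proof of Theorem~\ref{th:ZIrregular}; there is no analogue of the odd case here, which makes the present argument slightly shorter.

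With this setup, each bound follows by substituting $e(S,T) = 0$ (respectively $\tilde{b}^{*}_{ST} = 0$) into the relevant Expander Mixing Lemma and solving for $\mu$. For part~(i), I would apply Theorem~\ref{th:strongVolEML} as in the even case, bounding $\vol{S}$ and $\vol{T}$ between $\delta\mu n$ and $\Delta\mu n$, to reach $\mu \le \frac{\sigma\Delta}{\delta(\delta/\Delta + \sigma)}$. For part~(ii), I would instead use Theorem~\ref{th:PerronEML} together with $\langle \vecchi_S, \vecnu\rangle \ge \mu n\,\nu_{\min}$ (and likewise for $T$), which yields the inequality $\lambda_1 \nu_{\min}^2 \mu n - \lambda(1 - \mu n\,\nu_{\min}^2) \le 0$ and hence an upper bound on $\mu$. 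For part~(iii), Theorem~\ref{th:weightedEML} applied after bounding $\|\rho(S)\|^2 \ge |S|\nu_{\min}^2$ and $\|\rho(\overline{S})\|^2 \le |\overline{S}|\nu_{\max}^2$ (and analogously for $T$) gives $\lambda_1 \mu\,\nu_{\min}^4 - \lambda\,\nu_{\max}^4(1 - \mu) \le 0$, whence $\mu \le \frac{\lambda}{\lambda_1(\nu_{\min}/\nu_{\max})^4 + \lambda}$.

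In every case, inserting the resulting bound on $\mu$ into $\iota(G) \ge n - 2x(G) = n(1 - 2\mu)$ produces the stated inequality. Because these are precisely the three upper bounds on $\mu$ derived in the proof of Theorem~\ref{th:ZIrregular}, the intermediate estimates are identical and do not need to be reproduced; the only new ingredient is the input inequality $\iota(G) \ge n - 2x(G)$ in place of the Grundy-domination bound $Z(G) \ge n - \gamma_{gr}(G)$. I do not expect a genuine obstacle here: the entire content is the observation that the extremal pair of empty-join sets defining $x(G)$ feeds the Expander Mixing Lemmas in exactly the same manner as the dominating and footprinted halves of a Grundy dominating sequence. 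The one point to verify is that $x(G)$ is an integer, so $\mu = x(G)/n$ is not of any special form, but this is harmless since the estimates treat $\mu n$ as an arbitrary integer set size throughout.
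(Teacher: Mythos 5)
Your proposal is correct and matches the paper's own argument: the paper likewise invokes $\iota(G) \ge n-2x(G)$ from~\cite{alon2023} and observes that the extremal disjoint pair $S,T$ with $e(S,T)=0$ feeds into exactly the even-case computations of Theorem~\ref{th:ZIrregular}, so the three bounds on $\mu$ carry over verbatim. Your additional observation that only the even case is needed (since $|S|=|T|=x(G)$) is also accurate.
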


Theorem~\ref{th:integritybounds} generalizes the lower bound by Alon et al.\ in~\cite{alon2023} to irregular graphs. The tight examples for Theorem~\ref{th:ZIrregular} are also asymptotically tight for Theorem~\ref{th:integritybounds}, as both graph families have integrity~$n-1$.

\subsubsection*{Minimizing the ratio $\nu_{\max} / \nu_{\min}$} \label{sec:prinRat}

The previous applications of the Expander Mixing Lemma suggest that we should seek for irregular graphs with small ratio $\nu_{\max}/\nu_{\min}$, where $\nu_{\min}$ and~$\nu_{\max}$ are the minimum and maximum entries of the Perron eigenvector $\vecnu$. This ratio is known as the \textit{principal ratio}~$\gamma(G)$. It was shown in~\cite{ostrowski1952}, that
\begin{equation}
    \gamma(G)\ge \sqrt{\frac{\Delta}{\delta}}, \label{eq:principalRatio}
\end{equation} 

\noindent a bound which was later sharpened in~\cite{ciaoba2007}.
This means that if the difference between~$\nu_{\min}$ and~$\nu_{\max}$ is small, the minimum and maximum degree of the graph cannot differ much, so the graph is close to regular. It is also known that the principal ratio satisfies the following upper bound.

\begin{theorem}[{\cite[Theorem 2.1]{ciaoba2007}}]
    Let~$G=(V,E)$ be a graph with normalized Perron eigenvector~$\vecnu$ and let~$u,v$ be the vertices corresponding to the maximum and minimum entries of~$\vecnu$ respectively. If~$u$ and~$v$ are at distance~$d$ in~$G$, then
    \[\gamma(G) \le \frac{\tau^{d+1} - \tau^{-(d+1)}}{\tau - \tau^{-1}},\]
    where~$\tau = \frac{1}{2}(\lambda_1+\sqrt{\lambda_1^2-4})$.
    \label{th:principalRatioUpper}
\end{theorem}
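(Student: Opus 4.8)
The plan is to prove the upper bound on the principal ratio $\gamma(G) = \nu_{\max}/\nu_{\min}$ by exploiting the structure of the eigenvector equation $A\vecnu = \lambda_1\vecnu$ along a shortest path connecting the extremal vertices $u$ and $v$. The key observation is that the recurrence $\tau = \frac{1}{2}(\lambda_1 + \sqrt{\lambda_1^2 - 4})$ is exactly the larger root of $t^2 - \lambda_1 t + 1 = 0$, so the quantity $\frac{\tau^{d+1} - \tau^{-(d+1)}}{\tau - \tau^{-1}}$ should emerge naturally from iterating a second-order linear recurrence of Chebyshev type. This suggests that the bound is obtained by controlling how fast the Perron entries can decay as one walks from $u$ (the maximum) to $v$ (the minimum).

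First I would set up a comparison sequence governed by the recurrence induced by $A\vecnu = \lambda_1\vecnu$. For each vertex $w$, the eigenvalue equation gives $\lambda_1 \nu_w = \sum_{x \sim w} \nu_x$, so $\lambda_1 \nu_w \geq \nu_{w'}$ for any single neighbor $w'$, but more usefully one can bound $\nu_w$ below in terms of its neighbors. The idea is to define along a shortest $u$--$v$ path $u = p_0, p_1, \dots, p_d = v$ a sequence of lower bounds: I would introduce auxiliary values $f_0, f_1, \dots, f_{d+1}$ satisfying the linear recurrence $f_{i+1} = \lambda_1 f_i - f_{i-1}$ with appropriate boundary data, and show by induction that $\nu_{p_i} \geq (\text{something involving } f_i)\cdot \nu_{\max}$ or equivalently that the ratio $\nu_{p_{i-1}}/\nu_{p_i}$ cannot be too large at each step. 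The closed form of the Chebyshev-like recurrence $f_{i+1} = \lambda_1 f_i - f_{i-1}$ with $\tau$ as characteristic root is precisely $f_i = \frac{\tau^i - \tau^{-i}}{\tau - \tau^{-1}}$, which produces the claimed right-hand side when evaluated at $i = d+1$.

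The main obstacle I anticipate is making the inductive step rigorous, since the eigenvalue equation at $p_i$ involves all neighbors of $p_i$, not just the two path-neighbors $p_{i-1}$ and $p_{i+1}$. To handle this I would use nonnegativity of all the eigenvector entries: since $\vecnu > 0$ (Perron-Frobenius, as guaranteed in the preliminaries), the extra neighbor contributions only help the lower bound, so dropping them is safe in the right direction. The technical care lies in choosing the correct direction to iterate (from the minimum entry $v$ outward, bounding how quickly the entries are forced to grow toward $u$) and in verifying that the boundary conditions of the recurrence match up so that after $d+1$ steps one recovers exactly $\gamma(G) = \nu_{\max}/\nu_{\min} \leq \frac{\tau^{d+1} - \tau^{-(d+1)}}{\tau - \tau^{-1}}$.

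Concretely, I expect the cleanest route is to show that the function $g_i$ defined by the relation $\nu_{p_i} \geq g_i\,\nu_{\min}$ (or a reciprocal formulation) satisfies $g_i \lambda_1 \geq g_{i+1} + g_{i-1}$, then compare $g_i$ against the exact solution of $h_{i+1} = \lambda_1 h_i - h_{i-1}$ via a standard monotonicity/comparison argument for linear recurrences with $\lambda_1 \geq 2$ (which holds for any non-trivial connected graph, ensuring $\tau$ is real and the hyperbolic form of the solution is valid). Establishing $\lambda_1 \geq 2$ and hence that $\tau$ is well-defined and real would be a preliminary sanity check; the condition $\lambda_1^2 \geq 4$ is exactly what guarantees the square root in the definition of $\tau$ is real. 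Once the comparison is in place, evaluating the closed form at index $d+1$ yields the stated bound directly.
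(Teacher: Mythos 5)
First, a point of reference: the paper itself does not prove Theorem~\ref{th:principalRatioUpper}; it is quoted directly from~\cite{ciaoba2007}, so your proposal can only be compared with the proof of Cioab\u{a} and Gregory. Your overall strategy is indeed theirs: take a shortest path $v=p_0,p_1,\dots,p_d=u$ from the minimum to the maximum entry, use the eigenvalue equation $\lambda_1\nu_{p_i}=\sum_{w\sim p_i}\nu_w\ge\nu_{p_{i-1}}+\nu_{p_{i+1}}$ (positivity of the Perron entries lets you discard the off-path neighbours), and compare with the Chebyshev-type recurrence whose solution with $f_0=0$, $f_1=1$ is $f_i=(\tau^i-\tau^{-i})/(\tau-\tau^{-1})$. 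However, your primary inductive formulation has a genuine gap: from $\lambda_1 g_i\ge g_{i+1}+g_{i-1}$ (with $g_i=\nu_{p_i}/\nu_{\min}$) you \emph{cannot} compare $g_i$ with the exact solution $h_i$ of $h_{i+1}=\lambda_1 h_i-h_{i-1}$ term by term. The inductive step would need to deduce $g_{i+1}\le\lambda_1 h_i-h_{i-1}$ from $g_{i+1}\le\lambda_1 g_i-g_{i-1}$, $g_i\le h_i$ and $g_{i-1}\le h_{i-1}$, but the term $-g_{i-1}$ enters with the wrong sign: one needs a \emph{lower} bound on $g_{i-1}$ there, not an upper bound, so no off-the-shelf comparison theorem for linear recurrences applies. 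The fix is exactly the ratio version you mention only in passing: set $r_i=\nu_{p_{i+1}}/\nu_{p_i}$, so that $r_0\le\lambda_1$ and $r_i\le\lambda_1-1/r_{i-1}$; since the M\"obius map $x\mapsto\lambda_1-1/x$ is increasing on $(0,\infty)$ and all $r_i>0$, induction gives $r_i\le s_i$, where $s_0=\lambda_1$ and $s_i=\lambda_1-1/s_{i-1}$ is the exact orbit, i.e., $s_i=f_{i+2}/f_{i+1}$. Then $\gamma(G)=\prod_{i=0}^{d-1}r_i\le\prod_{i=0}^{d-1}s_i=f_{d+1}$ telescopes to the claimed bound. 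So the ratio formulation is not an interchangeable alternative; it is the step that makes the induction go through.

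Second, your ``preliminary sanity check'' is false: $\lambda_1\ge2$ does \emph{not} hold for every non-trivial connected graph. The paths $P_n$ have $\lambda_1=2\cos(\pi/(n+1))<2$ and the star $K_{1,3}$ has $\lambda_1=\sqrt{3}$; these are connected, irregular graphs to which the theorem applies (and for which it is in fact tight in small cases). A proof plan that relies on establishing $\lambda_1\ge2$ therefore breaks down on such graphs. Note that the comparison argument above never uses $\lambda_1\ge2$: positivity of the Perron entries is all that is needed, and it also yields $f_{i+1}>0$ for all $0\le i\le d$ along the way. Only the closed hyperbolic form of $f_i$ uses reality of $\tau$; when $\lambda_1<2$ one writes $\lambda_1=2\cos\theta$ and $\tau=e^{i\theta}$, so the right-hand side of the theorem reads $\sin((d+1)\theta)/\sin\theta$ (and when $\lambda_1=2$ it is the limit value $d+1$). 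Equivalently, one restricts the statement as written, with its real square root, to graphs with $\lambda_1>2$; either way, this case distinction must be addressed rather than waved off by the incorrect claim that $\lambda_1\ge2$ always holds.
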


It was shown in~\cite{ciaoba2007} that the lollipop graphs~$P_r \cdot K_s$ (also known as kite graphs) have principal ratio~$\frac{\tau^r-\tau^{-r}}{\tau-\tau^{-1}}$. The authors conjectured that for the right choice of~$r$ and~$s$, these graphs always achieve the largest possible principal ratio among all graphs with the same number of vertices. This was later confirmed in~\cite{tait2015}. 

For our applications of the Expander Mixing Lemma, on the other hand, we are interested in irregular graph families where~$\gamma(G)$ is small. As we saw in Section~\ref{sec:zeroForcing}, this is not required for our bounds to perform well, but it does ensure that the bounds are not trivial. An exhaustive search shows that for small graphs, the family of complete graphs with one edge removed achieves the minimum principal ratio among all irregular graphs. Asymptotically, $\gamma(G) \rightarrow 1$ (this was also observed in~\cite{clark2022}) and these graphs also minimize the ratio~$\Delta / \delta$. It is therefore not surprising that they are tight examples for Theorem~\ref{th:ZIrregular}.

\subsection{Routing number}\label{application:routing}

Let~$G=(V,E)$ be a graph with a pebble placed on each vertex and let~$\pi$ be an arbitrary permutation of $V$. Consider the routing problem where each pebble must be moved from its respective vertex~$v$ to its unique destination~$\pi(v)$ by repeatedly selecting a matching of edges in~$E$ and swapping the pebbles on the endpoints of every matching edge. The \emph{routing number of~$\pi$}, denoted by~$\rt{G,\pi}$, is the minimum number of steps needed to move all pebbles to their destination and the \emph{routing number} of the graph~$G$ is defined as the maximum over all permutations~$\pi$, i.e,
\[\rt{G} = \max_{\pi} \rt{G,\pi}.\]

The routing number was introduced by Alon, Chung and Graham~\cite{acg1993routing}, who proved the exact value for some basic graph classes and obtained a polylogarithmic upper bound for regular expander graphs with a sufficiently large spectral gap. Bounds for trees and hypercubes were found by Zhang~\cite{zhang1999} and Li, Lu and Yang~\cite{Li2010routing} respectively, confirming conjectures by Alon, Chung and Graham~\cite{acg1993routing}. The following spectral upper bound by Horn and Purcilly appeared in~\cite{horn2020routing} and was sharpened by a constant factor in~\cite{purcilly2020thesis}. 

\begin{theorem}[{\cite[Theorem 3]{horn2020routing}, \cite[Theorem 11]{purcilly2020thesis}}]
\label{th:routingThesis}
    Fix $0 < c < 1$. Let $G$ be a graph with minimum degree~$\delta = cn$ and normalized adjacency eigenvalues~$\sigma_1 \ge \dots \ge \sigma_n$. Let~$\sigma = \max\{|\sigma_2|,|\sigma_n|\}$ and suppose~$\sigma < c^2$. Then
    \[\rt{G} \le \frac{12}{c^2(c^2-\sigma)}.\]
\end{theorem}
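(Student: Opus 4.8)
The plan is to reduce the routing problem to a matching-based argument driven by the Expander Mixing Lemma, following the strategy of Alon--Chung--Graham but quantifying the mixing using the normalized adjacency spectrum. The high-level idea is that to route an arbitrary permutation $\pi$, it suffices to show that in a bounded number of matching rounds we can make substantial progress toward the target configuration, and then iterate. A standard reduction shows that routing an arbitrary permutation can be accomplished by combining a constant number of \emph{partial permutation routings}, where in each such subproblem the pebbles that still need to move form two sets $S, T \subseteq V$ that are to be matched. The key combinatorial fact we need is that any two vertex subsets $S, T$ of comparable size are joined by many edges, so a single matching can transport a constant fraction of the misplaced pebbles.

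First I would invoke the Expander Mixing Lemma in the normalized (volume) form, Theorem~\ref{th:strongVolEML}, applied to the sets $S$ and $T$ of pebbles whose positions must be swapped or relocated. Using $\delta = cn$ and $\vol{V} \ge \delta n = cn^2$, the main term $\vol{S}\vol{T}/\vol{V}$ is bounded below by a quantity of order $c \abs{S}\abs{T}/n \cdot \delta$, while the error term is controlled by $\sigma$. Because $\sigma < c^2$, the edge count $e(S,T)$ stays strictly positive and in fact grows like a constant fraction of the ``expected'' value whenever $\abs{S}, \abs{T}$ are a constant fraction of $n$. Concretely, the condition $\sigma < c^2$ is precisely what guarantees that the expander mixing error $\sigma\sqrt{\vol{S}\vol{T}}$ is dominated by the main term, so that $E(S,T)$ is nonempty and, more importantly, that a large matching between $S$ and $T$ exists.

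Next I would convert the lower bound on $e(S,T)$ into a lower bound on the size of a \emph{matching} saturating a constant fraction of $S$. This is the step where I expect the main technical work: one cannot directly equate ``many edges'' with ``large matching,'' so I would use a defect-type argument (Hall's theorem or the K\H{o}nig--Egerv\'ary bound) to extract a matching covering at least a constant fraction of the smaller set. The spectral gap condition $\sigma < c^2$ is exactly calibrated so that no small vertex cover can account for all the edges between $S$ and $T$, forcing a large matching to exist. Having established that each round resolves at least a fixed proportion $\theta = \theta(c,\sigma) > 0$ of the outstanding pebbles, the number of rounds needed to route all pebbles is $O(\log n / \log(1/(1-\theta)))$ at worst; but the sharper analysis of Horn--Purcilly shows that with the constant-fraction progress guaranteed here, a \emph{bounded} number of rounds suffices, yielding the stated $O(1/(c^2(c^2-\sigma)))$ bound rather than a logarithmic one.

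Finally I would assemble the constant. Tracking the dependence of $\theta$ on the gap $c^2 - \sigma$ — which enters because the matching size is governed by how far the error term $\sigma\sqrt{\vol{S}\vol{T}}$ sits below the main term $\sim c^2 \abs{S}\abs{T}$ — produces a per-round progress of order $(c^2 - \sigma)$, and dividing by the remaining $c^2$ scaling from the degree normalization gives the denominator $c^2(c^2 - \sigma)$; the numerical constant $12$ comes from bookkeeping over the (constantly many) phases of the Alon--Chung--Graham routing scheme and the worst-case matching defect. I expect the matching-extraction step and the careful accounting of the constant to be the only nonroutine parts; the spectral input itself is a clean single application of Theorem~\ref{th:strongVolEML}.
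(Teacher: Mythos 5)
There is a genuine gap at the heart of your argument: you apply the Expander Mixing Lemma to the two sets $S,T$ of misplaced pebbles and then try to extract a large matching between them, counting each matched pebble as progress. But an edge of such a matching joins a pebble at $s\in S$ to an \emph{arbitrary} vertex $t\in T$, not to its prescribed destination $\pi(s)$; moving the pebble across that edge need not bring it any closer to where it must go, so ``a matching covering a constant fraction of $S$'' does not translate into ``a constant fraction of pebbles routed,'' and neither Hall's theorem nor K\H{o}nig--Egerv\'ary can repair this, since the obstruction is not the existence of a matching but the requirement that pebbles reach \emph{specific} targets. The actual proof (Horn--Purcilly, and it is the same scheme the paper follows when proving its Perron-weighted analogue, Theorem~\ref{th:routingPerron}) is organized around pairs rather than around sets of misplaced pebbles: one first writes $\pi=\pi_2\pi_1$ with $\pi_1,\pi_2$ involutions, i.e.\ products of disjoint transpositions (the Alon--Chung--Graham reduction), so that routing reduces to simultaneously swapping the two pebbles of each transposition $(v,v')$. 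The Expander Mixing Lemma (Theorem~\ref{th:strongVolEML}) is then applied not to pebble sets but to the \emph{neighborhoods} $S=G(v)$, $T=G(v')$: since $\delta=cn$ gives $\vol{G(v)},\vol{G(v')}\ge c^2n^2$ while $\vol{V}\le n^2$, one obtains
\[
e\bigl(G(v),G(v')\bigr)\;\ge\;\sqrt{\vol{G(v)}\vol{G(v')}}\,\bigl(c^2-\sigma\bigr)\;\ge\;c^2n^2(c^2-\sigma),
\]
which says every transposed pair is joined by many edges between their neighborhoods, hence by many paths of length at most~$3$. This is also where the hypotheses $\delta=cn$ and $\sigma<c^2$ actually enter --- as a statement about neighborhoods of single vertices, not about large pebble sets.

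The second gap is in your round-counting. Even granting constant-fraction progress per round, your scheme yields $O(\log n)$ rounds, and you remove the logarithm only by appealing to ``the sharper analysis of Horn--Purcilly,'' i.e.\ by citing the very argument you are meant to reconstruct. In the real proof no iteration over remaining pebbles occurs: the abundance of length-$\le 3$ paths for \emph{every} transposition feeds a parallel scheduling argument (the remainder of Purcilly's Theorem~11, invoked verbatim in the paper's proof of Theorem~\ref{th:routingPerron}), with a discrepancy parameter $\varepsilon$ proportional to $c^2(c^2-\sigma)$ and conclusion $\rt{G}\le 3/\varepsilon = 12/\bigl(c^2(c^2-\sigma)\bigr)$, which is where the constant $12$ and the denominator really come from.
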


The proof of this result uses (a weaker version of) the Expander Mixing Lemma for irregular graphs by Chung~\cite{chung2004} (see Theorem~\ref{th:strongVolEML}). Following the same technique, we will derive an alternative upper bound using our Expander Mixing Lemma from Theorem~\ref{th:PerronEML}.

\begin{theorem} \label{th:routingPerron}
    Fix $0 < c < 1$. Let $G$ be a graph with minimum degree~$\delta = cn$, adjacency eigenvalues~$\lambda_1 \ge \dots \ge \lambda_n$ and normalized Perron eigenvector~$\vecnu$. Let~$\lambda = \max\{|\lambda_2|,|\lambda_n|\}$ and suppose~$\lambda < \lambda_1cn\nu_{\min}^2$. Then
    \[\rt{G} \le \frac{12n}{c(\lambda_1cn\nu_{\min}^2-\lambda)}.\]
\end{theorem}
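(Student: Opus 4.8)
The plan is to mirror the proof strategy of Theorem~\ref{th:routingThesis} by Horn and Purcilly, but to substitute our Perron-weighted Expander Mixing Lemma (Theorem~\ref{th:PerronEML}) in place of the volume-based one (Theorem~\ref{th:strongVolEML}) wherever edge counts between vertex sets are estimated. The essential combinatorial structure of the routing argument is unchanged: to route an arbitrary permutation~$\pi$ in few rounds, one decomposes the task into a bounded number of phases, and in each phase one needs to guarantee that any two sets~$S,T\subseteq V$ of comparable size share many edges, so that a large matching routing pebbles from~$S$ toward~$T$ can always be extracted. The role of the Expander Mixing Lemma is precisely to supply this guarantee: it forces~$e(S,T)$ to be close to its ``expected'' value, which is bounded away from zero once~$S$ and~$T$ are large enough.

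The key steps, in order, are as follows. First I would recall the combinatorial skeleton of the proof in~\cite{horn2020routing,purcilly2020thesis}, isolating the single lemma of the form ``if~$|S|,|T|$ are at least some threshold~$\alpha n$ then~$e(S,T)>0$ (indeed is large),'' which is the only place the spectral hypothesis enters. Second, I would re-derive that lemma using Theorem~\ref{th:PerronEML}: from
\[
e(S,T) \ge \lambda_1\langle\vecchi_S,\vecnu\rangle\langle\vecchi_T,\vecnu\rangle - \lambda\sqrt{(|S|-\langle\vecchi_S,\vecnu\rangle^2)(|T|-\langle\vecchi_T,\vecnu\rangle^2)},
\]
I would bound the positive term from below and the error term from above. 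Here the natural estimates are~$\langle\vecchi_S,\vecnu\rangle \ge |S|\nu_{\min}$ and~$\langle\vecchi_S,\vecnu\rangle^2 \ge 0$, giving a main term at least~$\lambda_1|S||T|\nu_{\min}^2$ and an error at most~$\lambda\sqrt{|S||T|}$. Under the hypothesis~$\lambda<\lambda_1 cn\nu_{\min}^2$, together with~$\delta=cn$ forcing each relevant set to have size at least~$cn$, the main term dominates and~$e(S,T)$ is strictly positive with a quantitative gap. Third, I would track how this gap propagates through the phase-counting argument, arriving at the stated bound~$\rt{G}\le 12n/\bigl(c(\lambda_1cn\nu_{\min}^2-\lambda)\bigr)$; the factor~$12$ and the overall shape of the denominator should be inherited directly from the structure of Theorem~\ref{th:routingThesis}, with~$c^2$ and~$c^2-\sigma$ replaced by the Perron-weighted analogues~$c\cdot cn\nu_{\min}^2$ and~$\lambda_1cn\nu_{\min}^2-\lambda$ arising from our version of the edge-expansion estimate.

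The main obstacle will be bookkeeping the exact correspondence between the normalized-eigenvalue quantities~$c^2$ and~$\sigma$ used in the original proof and the adjacency-eigenvalue quantities~$\lambda_1\nu_{\min}^2$ and~$\lambda$ that Theorem~\ref{th:PerronEML} produces, so that the constant~$12$ and the powers of~$n$ come out exactly as claimed. In the volume-based proof, the expected edge density between sets is naturally phrased via~$\vol{S}\vol{T}/\vol{V}$ and the normalized spectral gap~$c^2-\sigma$; in the Perron version the ``regularized'' weight-degree is the constant~$\lambda_1$ and the relevant density is governed by~$\lambda_1\nu_{\min}^2$, so I expect to need the crude but robust bounds~$\langle\vecchi_S,\vecnu\rangle\ge|S|\nu_{\min}$ and~$|S|\ge\delta=cn$ to convert between the two frameworks. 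The risk is that these worst-case bounds are lossy enough to change a constant; the plan is to choose the estimates so that, upon specializing to the regular case, the new bound collapses to the Horn--Purcilly bound, which serves as a consistency check that no spurious constant has been introduced.
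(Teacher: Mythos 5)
Your proposal matches the paper's proof essentially step for step: the paper likewise decomposes $\pi$ into two involutions following Horn--Purcilly, applies Theorem~\ref{th:PerronEML} with exactly your estimates to get $e(S,T)\ge\lambda_1|S||T|\nu_{\min}^2-\lambda\sqrt{|S||T|}$ for the neighborhoods $S=G(v)$, $T=G(v')$ of each transposition's endpoints, and then reruns the remainder of Purcilly's phase argument with $\varepsilon=c(\lambda_1 cn\nu_{\min}^2-\lambda)/(4n)$ to get $\rt{G}\le 3/\varepsilon$. The one detail your sketch glosses over is that substituting $|S|,|T|\ge cn$ into this lower bound requires knowing that $f(x,y)=\lambda_1 xy\nu_{\min}^2-\lambda\sqrt{xy}$ is increasing for $x,y\ge cn$ (the error term also grows with the set sizes); the paper verifies this by a short derivative computation using the hypothesis $\lambda<\lambda_1 cn\nu_{\min}^2$, and it also follows directly from the factorization $f(x,y)=\sqrt{xy}\left(\lambda_1\sqrt{xy}\,\nu_{\min}^2-\lambda\right)$.
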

\begin{proof}
Let~$\pi$ be a permutation of~$V(G)$. Then~$\pi=\pi_2 \pi_1$ for some permutations~$\pi_1$,~$\pi_2$ of order two~\cite{acg1993routing}, meaning that~$\pi_1$ and~$\pi_2$ can be written as a product of disjoint transpositions. Let~$(v,v')$ be a transposition in~$\pi_1$ and let~$f(x,y) = \lambda_1 xy\nu_{\min}^2 - \lambda\sqrt{xy}$. For~$x,y \ge cn$, we have
\begin{align*}
    f_x(x,y) &= \lambda_1y\nu_{\min}^2 - \lambda\frac{y}{2\sqrt{xy}}\\
    &= \sqrt{y}\left( \frac{2\lambda_1\nu_{\min}^2\sqrt{xy}-\lambda}{2\sqrt{x}}\right)\\
    &\ge \sqrt{cn}\left(\frac{2\lambda_1\nu_{\min}^2cn - \lambda}{2\sqrt{x}}\right)\\
    &> 0,
\end{align*}
so, by symmetry,~$f$ is increasing in both parameters.

It follows from Theorem~\ref{th:PerronEML} that
\begin{align*}
e(S,T) &\ge \lambda_1 \langle \vecchi_S,\vecnu\rangle  \langle \vecchi_T,\vecnu\rangle -\lambda\sqrt{(|S|- \langle \vecchi_S,\vecnu\rangle^2)(|T|- \langle \vecchi_T,\vecnu\rangle^2)} \nonumber \\ &\ge \lambda_1|S||T|\nu_{\min}^2-\lambda\sqrt{|S||T|}.
\end{align*}
for any~$S,T\subseteq V$. Applying this to~$S=G(v)$ and~$T = G(v')$ gives
\begin{align*}
    e(G(v),G(v')) &\ge \lambda_1|G(v)||G(v')|\nu_{\min}^2-\lambda\sqrt{|G(v)||G(v')|}\\
    &\ge \lambda_1(cn)^2\nu_{\min}^2-\lambda cn\\
    &= cn(\lambda_1 cn \nu_{\min}^2 -\lambda),
\end{align*}
where the second inequality uses the fact that~$f(x,y)$ is increasing for~$x,y\ge cn$ and~$|G(v)|,|G(v')|\ge cn$. 
Let~$\varepsilon = c(\lambda_1 cn \nu_{\min}^2 -\lambda) /(4n)$. Repeating the remainder of the proof of Theorem 11 in~\cite{purcilly2020thesis} with this new value of~$\varepsilon$ gives
\[\rt{G} \le \frac{3}{\varepsilon} = \frac{12n}{c(\lambda_1cn\nu_{\min}^2-\lambda)}.\qedhere \]
\end{proof}

\begin{remark}
With a slightly more careful analysis, the condition~$\sigma<c^2$ in Theorem~\ref{th:routingThesis} can be relaxed to~$\sigma < \frac{(cn)^2}{\vol{V}}$ and the upper bound can be improved to
\[\frac{12}{c^2((cn)^2/\vol{V}-\sigma)}.\]
For a $d$-regular graph, the first inequality simplifies to~$\sigma <d/n$. By substituting~$\nu_{\min}=1/\sqrt{n}$ and~$\lambda = \sigma\lambda_1 = \sigma d$, we find that~$\lambda_1cn\nu_{\min}^2 = d/n$ as well, so for regular graphs, this tighter version of Theorem~\ref{th:routingThesis} and our Theorem~\ref{th:routingPerron} pose the same restrictions on~$G$. Moreover, the upper bounds on the routing number also coincide as 
\[\frac{12}{c^2((cn)^2/\vol{V}-\sigma)} = \frac{12}{c^2(c-\sigma)} = \frac{12n}{c(\lambda_1cn\nu_{\min}^2-\lambda)}.\]
However, if we compare the original statement in Theorem~\ref{th:routingThesis} directly, it is slightly weaker than Theorem~\ref{th:routingPerron} for regular graphs.
\end{remark}

\subsection{$k$-Independence number}\label{application:hoffmantypebound}

The~\textit{$k$-independence number} of a graph~$G$, denoted by~$\alpha_k(G)$, is the maximum size of a set of vertices at pairwise distance greater than $k$. The well-known Hoffman bound on the (1-)in\-de\-pen\-dence number of a graph can be proved using Theorem~\ref{th:irregHaemers} (see Theorem 2.1.2 and 2.1.4 in \cite{haemersThesis}), and in~\cite{ACT2016}, a variant of the Expander Mixing Lemma was used to find a Hoffman-type bound for the~$k$-independence number for any value of~$k$. This bound was improved in~\cite[Theorem 3.2]{ACF2019} using eigenvalue interlacing. In this section, we will show that it can also be obtained as an application of Theorem~\ref{th:generalPolyEML}\ref{th:generalPolyEMLi}.

\begin{corollary}
Let~$G$ be a regular graph and let~$p \in \mathbb{R}_k[x]$ with corresponding parameters~$W(p)$ and~$\lambda(p)$, and assume~$p(\lambda_1) > p(\lambda_i)$ for all $i\le 2$. Then,
\[\alpha_k(G) \le n\frac{W(p)-\lambda(p)}{p(\lambda_1)-\lambda(p)}.\]
\label{cor:boundAlphaK}
\end{corollary}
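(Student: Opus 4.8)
The plan is to apply Theorem~\ref{th:generalPolyEML}\ref{th:generalPolyEMLi} to a $k$-independent set $S$ and extract the stated bound. Since $G$ is regular, the Perron eigenvector is $\vecnu = \frac{1}{\sqrt{n}}\j$, so for any vertex set $S$ we have $\langle \vecchi_S,\vecnu\rangle = |S|/\sqrt{n}$ and hence $\langle\vecchi_S,\vecnu\rangle^2 = |S|^2/n$. The key structural observation is that if $S$ is a set of vertices at pairwise distance greater than $k$, then for any two distinct $u,v \in S$ there is no walk of length at most $k$ between them; consequently, for a polynomial $p\in\mathbb{R}_k[x]$, the entry $p(A)_{uv}$ depends only on walks of length $\le k$ and therefore vanishes whenever $u\neq v$ lie in $S$.

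Concretely, I would first recall that $P(S,S) = \sum_{i,j\in S} p(A)_{ij}$. Splitting off the diagonal, $P(S,S) = \sum_{i\in S} p(A)_{ii} + \sum_{i\neq j\in S} p(A)_{ij}$. By the distance argument the off-diagonal terms are all zero, so $P(S,S) = \sum_{i\in S} p(A)_{ii} \le |S|\, W(p)$, using the definition $W(p) = \max_u (p(A))_{uu}$. Next I would invoke the upper bound in Theorem~\ref{th:generalPolyEML}\ref{th:generalPolyEMLi}, namely
\[
p(\lambda_1)\langle\vecchi_S,\vecnu\rangle^2 - P(S,S) \le -\lambda(p)\left(|S| - \langle\vecchi_S,\vecnu\rangle^2\right),
\]
which rearranges to $P(S,S) \ge p(\lambda_1)\langle\vecchi_S,\vecnu\rangle^2 + \lambda(p)\left(|S| - \langle\vecchi_S,\vecnu\rangle^2\right)$. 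Substituting $\langle\vecchi_S,\vecnu\rangle^2 = |S|^2/n$ gives a lower bound on $P(S,S)$ purely in terms of $|S|$, $n$, $p(\lambda_1)$ and $\lambda(p)$.

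Combining the combinatorial upper bound $P(S,S)\le |S|\,W(p)$ with this spectral lower bound yields
\[
|S|\,W(p) \ge p(\lambda_1)\frac{|S|^2}{n} + \lambda(p)\left(|S| - \frac{|S|^2}{n}\right).
\]
Dividing through by $|S|$ (positive, assuming $S$ nonempty) and solving the resulting linear inequality for $|S|$ gives $|S| \le n\frac{W(p)-\lambda(p)}{p(\lambda_1)-\lambda(p)}$; the hypothesis $p(\lambda_1) > p(\lambda_i)$ for $i\ge 2$ guarantees $p(\lambda_1) > \lambda(p)$, so the denominator is positive and the direction of the inequality is preserved. Since $S$ was an arbitrary $k$-independent set, taking the maximum over all such $S$ gives the bound on $\alpha_k(G)$.

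The only delicate point is justifying that $p(A)_{uv}=0$ for distinct $u,v$ in a $k$-independent set: this requires that $p$ has no constant term (so that $p(A)$ is a combination of $A^1,\dots,A^k$ and $(A^t)_{uv}$ counts walks of length exactly $t$), which is implicit in the $\mathbb{R}_k[x]$ convention used earlier for $x^{(k)}$; I would state this assumption explicitly. Everything else is routine algebra, so I do not anticipate a genuine obstacle beyond being careful that the inequality's direction survives the division and rearrangement.
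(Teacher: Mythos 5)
Your proof is correct and matches the paper's own argument essentially step for step: apply Theorem~\ref{th:generalPolyEML}\ref{th:generalPolyEMLi} to a maximum $k$-independent set, use the distance condition to bound $P(S,S)\le |S|\,W(p)$, substitute $\langle\vecchi_S,\vecnu\rangle^2=|S|^2/n$, and solve the resulting linear inequality for $|S|$. The only thing to drop is your proposed extra assumption that $p$ have no constant term: a constant term $c_0$ contributes $c_0 I$ to $p(A)$, which vanishes off the diagonal, so $p(A)_{uv}=0$ for distinct $u,v\in S$ holds for every $p\in\mathbb{R}_k[x]$, exactly as the paper asserts.
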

\begin{proof}
Let~$S$ be a maximum size~$k$-independent set, i.e.,~$\abs{S}=\alpha_k(G)$. By Theorem~\ref{th:generalPolyEML}\ref{th:generalPolyEMLi}, we have
\begin{equation}p(\lambda_1)\frac{\abs{S}^2}{n} - P(S,S) \le -\lambda(p) \left(\abs{S} - \frac{\abs{S}^2}{n}\right).\label{eq:kindEML} \end{equation}
Moreover, since~$S$ is~$k$-independent and~$p$ of degree~$k$,~$p(A)_{ij}=0$ for any distinct~$i,j\in S$, so~$P(S,S)\le \abs{S} W(p)$. Combining this with Equation~\eqref{eq:kindEML} and solving for~$\abs{S}$ gives
\begin{align*}
\abs{S} &= \alpha_k(G) \le n\frac{W(p)-\lambda(p)}{p(\lambda_1)-\lambda(p)}.
\qedhere\end{align*}
\end{proof}
 
\begin{remark} In~\cite[Theorem 4.3]{ACFNZ2022}, it was shown that Corollary~\ref{cor:boundAlphaK} also holds for irregular graphs and the optimal polynomial was computed using mixed integer linear programming. This stronger result does not follow from Theorem~\ref{th:generalPolyEML}\ref{th:generalPolyEMLi}. Following the proof of Corollary~\ref{cor:boundAlphaK}, we find that
\[|S|(\lambda(p)-W(p))\le(\lambda(p)-p(\lambda_1))\langle\vecchi_S,\vecnu\rangle^2.\]
To get rid of~$\vecchi_S$, we can lower bound~$\langle\vecchi_S,\vecnu\rangle^2 \ge |S|^2\nu_{\min}^2$ with ~$\nu_{\min} = \min \vecnu$. Solving for~$|S|=\alpha_k(G)$ gives
\[\alpha_k(G)\le \frac{W(p)-\lambda(p)}{\nu_{\min}^2(p(\lambda_1)-\lambda(p))}. \]
Since~$\nu_{\min}$ is smaller than one for irregular graphs, this is always worse than~\cite[Theorem 4.3]{ACFNZ2022}.
\end{remark}

\section{Converse of the irregular Expander Mixing Lemma}\label{sec:converse}

The Expander Mixing Lemma comes close to characterizing the largest absolute eigenvalue of a graph, as the converse statement is also true: if the number of edges between any two vertex sets is close to the expected distribution in a random graph, the second largest eigenvalue in absolute value is bounded. Variants of this result were described in~\cite[Section 5]{BL2005} and~\cite{bollobas2004}, and were later sharpened in~\cite{lev2015}. 
A converse to the irregular Expander Mixing Lemma in Corollary~\ref{cor:weakVolEML} is provided in~\cite{Butler2006}. In this section, we determine a similar counterpart for Theorem~\ref{th:PerronEML} and Theorem~\ref{th:weightedEML}.

The original proof for regular graphs makes use of the following lemma, which we will also use. 

\begin{lemma}[{\cite[Lemma 3.3]{BL2005}}]
Let~$B$ be an~$n\times n$ real symmetric matrix such that the~$\ell_1$ norm of each row in~$B$ is at most~$d$, and all diagonal entries of~$B$ are, in absolute value, $O(\alpha(\log(d/\alpha)+1))$. Assume that for any two vectors,~$\vecu,\vecv \in \{0,1\}^n$, with~$\supp(\vecu)\cap\supp(\vecv)=\emptyset$:
\[\frac{\abs{\vecu^\top B \vecv}}{\|\vecu\|\|\vecv\|}\le \alpha.\]
Then the spectral radius of~$B$ is~$O(\alpha(\log(d/\alpha)+1))$.
\label{lem:reverseEML}
\end{lemma}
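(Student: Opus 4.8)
The plan is to bound the spectral radius through its variational description and then replace the quadratic form of $B$ by a controlled sum of the bilinear discrepancies supplied by the hypothesis, grouped by a dyadic (level-set) decomposition of a test vector. Since $B$ is real symmetric, its spectral radius equals $\max_{\norm{\vecx}{}=1}\abs{\inn{\vecx,B\vecx}}$, so it suffices to bound $\abs{\inn{\vecx,B\vecx}}$ for an arbitrary unit vector $\vecx$. First I would peel off the diagonal contribution $\sum_i B_{ii}x_i^2$, which is at most $\max_i\abs{B_{ii}}=O(\alpha(\log(d/\alpha)+1))$ by the diagonal hypothesis together with $\sum_i x_i^2=1$. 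For the off-diagonal part, writing $\vecx=\vecx^{+}-\vecx^{-}$ as the difference of its nonnegative positive and negative parts expands $\inn{\vecx,B\vecx}$ into three bilinear forms $\inn{\vecu,B\vecv}$ with $\vecu,\vecv\ge 0$, so the problem reduces to estimating one such form.

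For a fixed nonnegative $\vecu$ I would partition its support into dyadic level sets $S_k=\{i:2^{-(k+1)}<u_i\le 2^{-k}\}$, and likewise partition the support of $\vecv$ into sets $T_\ell$. On each level the restriction of $\vecu$ to $S_k$ lies within a factor $2$ of $2^{-k}\vecchi_{S_k}$, so up to universal constants
\[
\abs{\inn{\vecu,B\vecv}}\ \lesssim\ \sum_{k,\ell}2^{-k}2^{-\ell}\,\abs{\inn{\vecchi_{S_k},B\vecchi_{T_\ell}}}.
\]
Whenever $S_k$ and $T_\ell$ are disjoint the hypothesis applies directly, giving $\abs{\inn{\vecchi_{S_k},B\vecchi_{T_\ell}}}\le \alpha\,\norm{\vecchi_{S_k}}{}\norm{\vecchi_{T_\ell}}{}=\alpha\sqrt{\abs{S_k}\abs{T_\ell}}$; the pairs with overlapping support are again absorbed into the diagonal bound. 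Setting $a_k=2^{-k}\sqrt{\abs{S_k}}$ and $b_\ell=2^{-\ell}\sqrt{\abs{T_\ell}}$, the displayed sum is $O\!\big(\alpha\,(\sum_k a_k)(\sum_\ell b_\ell)\big)$, and since the entries of $S_k$ are of size $\approx 2^{-k}$ we have $\sum_k a_k^2\le \norm{\vecu}{}^2\le 1$. A Cauchy--Schwarz over the levels then yields $(\sum_k a_k)^2\le L\sum_k a_k^2\le L$, where $L$ is the number of levels that carry a non-negligible contribution, and similarly for the $b_\ell$.

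The crux of the argument, and the step I expect to be the main obstacle, is showing that the effective number of levels is $L=O(\log(d/\alpha))$ rather than the naive $O(\log n)$ one gets from $\abs{S_k}\le n$. This is exactly where the $\ell_1$ row bound is indispensable: coordinates whose magnitude falls below a threshold of order $\alpha/d$ can contribute only negligibly, because each row of $B$ has total absolute mass at most $d$, so the tiny-entry levels are killed off before their logarithmically many scales can accumulate. Quantifying this cutoff cleanly—discarding the sub-threshold coordinates while bounding their residual contribution to $\inn{\vecx,B\vecx}$ purely through the row-$\ell_1$ estimate, and verifying that the surviving scales between the threshold $\alpha/d$ and the maximal entry (at most $1$) number only $O(\log(d/\alpha))$—is the delicate part. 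Once this level count is in place, combining it with the per-level discrepancy estimate of the previous paragraph and the diagonal bound of the first gives $\abs{\inn{\vecx,B\vecx}}=O(\alpha(\log(d/\alpha)+1))$ uniformly over unit vectors $\vecx$, which is precisely the asserted bound on the spectral radius of $B$. The sign splitting, the dyadic rounding, and the invocation of the hypothesis are then all routine; the entire difficulty is concentrated in the truncation and level-counting estimate.
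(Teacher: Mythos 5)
First, a point of reference: the paper does not prove this statement at all --- it is quoted verbatim from Bilu and Linial \cite{BL2005} and used as a black box, so your attempt can only be judged on its own merits against the original argument. Judged that way, it has two genuine gaps. The first is your claim that ``the pairs with overlapping support are again absorbed into the diagonal bound.'' This is false. After the splitting $\vecx=\vecx^{+}-\vecx^{-}$ and the dyadic decomposition, the problematic terms are of the form $\vecchi_{S_k}^\top B\,\vecchi_{S_k}$ (the same level set on both sides), and such a term equals $\sum_{i\in S_k}B_{ii}+\sum_{i\neq j,\; i,j\in S_k}B_{ij}$. The second sum consists of \emph{off-diagonal} entries, about which the diagonal hypothesis says nothing, and the discrepancy hypothesis does not apply to it since the supports coincide. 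An additional idea is required here, e.g.\ averaging over random bipartitions $S_k=S'\cup S''$: since the expectation of $\vecchi_{S'}^\top B\,\vecchi_{S''}$ over independent fair coin flips equals $\tfrac14\sum_{i\neq j\in S_k}B_{ij}$, while every realization is at most $\alpha\sqrt{\abs{S'}\abs{S''}}\le\alpha\abs{S_k}/2$ in absolute value by hypothesis, one gets $\bigl|\sum_{i\neq j\in S_k}B_{ij}\bigr|\le 2\alpha\abs{S_k}$. Some device of this kind is indispensable and is absent from your outline.

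The second gap is the one you yourself flag: the claim that only $O(\log(d/\alpha))$ dyadic levels matter is deferred, and the mechanism you sketch for it does not work as stated. Truncating at entries of size $\alpha/d$ and invoking the row bound gives estimates for the discarded part of the form $\theta^2 nd$ or $\theta d\sqrt{n}\,$, i.e.\ bounds that grow with $n$; obtaining $\log(d/\alpha)$ rather than $\log n$ is precisely the hard content of the lemma (it requires playing the trivial row-sum estimate against the discrepancy estimate across pairs of scales, not a single cutoff). A proposal whose self-identified ``crux'' is left unverified, and whose proposed mechanism for that crux is quantitatively insufficient, is not a proof. There is also a smaller technical flaw: the displayed inequality $\abs{\inn{\vecu,B\vecv}}\lesssim\sum_{k,\ell}2^{-k-\ell}\abs{\inn{\vecchi_{S_k},B\vecchi_{T_\ell}}}$ is false for sign-indefinite $B$, because entrywise domination does not control a bilinear form whose kernel changes sign (one can have $\inn{\vecchi_{S_k},B\vecchi_{T_\ell}}=0$ by cancellation while $\inn{\vecu,B\vecv}\neq 0$). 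The correct route is to write each level as an average of indicator vectors of its sub-level sets and apply the hypothesis to every pair of subsets $U\subseteq S_k$, $V\subseteq T_\ell$ before averaging; this repair is routine, but the two gaps above are not.
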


\begin{corollary}[{\cite[Corollary 5.1]{BL2005}} ]
Let~$G$ be a~$d$-regular graph on~$n$ vertices. Suppose that for any~$S,T \subset V(G)$, with~$S\cap T = \emptyset$
\[\abs*{e(S,T) - \frac{\abs{S}\abs{T}d}{n}} \le \alpha \sqrt{\abs{S}\abs{T}}.\]
Then all but the largest eigenvalue of~$G$ are bounded, in absolute value, by $O(\alpha (\log(\frac{d}{\alpha}) + 1))$.
\label{cor:reverseEML}
\end{corollary}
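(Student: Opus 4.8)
The plan is to derive Corollary~\ref{cor:reverseEML} as a direct application of the general matrix statement in Lemma~\ref{lem:reverseEML}, with $B = A(G)$ the adjacency matrix itself. First I would verify the hypotheses of Lemma~\ref{lem:reverseEML}. Since $G$ is $d$-regular, every row of $A$ has $\ell_1$ norm exactly $d$, so the row-norm condition holds. The diagonal entries of $A$ are all zero (the graph is simple, hence no loops), so the diagonal condition ``$O(\alpha(\log(d/\alpha)+1))$'' is trivially satisfied. The main work is then to check the bilinear-form hypothesis: for disjoint $\vecu,\vecv \in \{0,1\}^n$ we need $\abs{\vecu^\top A \vecv}/(\norm{\vecu}{}\norm{\vecv}{}) \le \alpha$.

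The key observation is that $0/1$ vectors with disjoint support are exactly characteristic vectors of disjoint sets: write $\vecu = \vecchi_S$ and $\vecv = \vecchi_T$ with $S \cap T = \emptyset$. Then $\vecchi_S^\top A \vecchi_T = e(S,T)$ counts the edges between $S$ and $T$, while $\norm{\vecchi_S}{} = \sqrt{\abs{S}}$ and $\norm{\vecchi_T}{} = \sqrt{\abs{T}}$. The step I expect to require a small argument is that the \emph{centering term} $\abs{S}\abs{T}d/n$ appearing in the hypothesis of the corollary does not actually spoil the bound needed by the lemma. Concretely, I would argue that one may assume $\abs{S}+\abs{T}\le n$ without loss of generality (replacing $T$ by $\overline{S\cup T}$ changes $e(S,T)$ and the centering term in a controlled way, or one simply notes the bound is symmetric enough to reduce to this case), so that $\abs{S}\abs{T}d/n \le \alpha\sqrt{\abs{S}\abs{T}}$ cannot by itself exceed the error budget; more carefully, the hypothesis gives
\[
\abs*{e(S,T)} \le \frac{\abs{S}\abs{T}d}{n} + \alpha\sqrt{\abs{S}\abs{T}},
\]
and dividing by $\sqrt{\abs{S}\abs{T}}$ yields
\[
\frac{\abs{\vecchi_S^\top A \vecchi_T}}{\norm{\vecchi_S}{}\norm{\vecchi_T}{}} \le \frac{d\sqrt{\abs{S}\abs{T}}}{n} + \alpha.
\]
Since $\sqrt{\abs{S}\abs{T}} \le n/2$ for disjoint sets, the first term is at most $d/2$, which is absorbed into the final $O(\alpha(\log(d/\alpha)+1))$ conclusion (or, after the reduction to small sets, is genuinely of order $\alpha$). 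Thus the bilinear form is bounded by a quantity of the required order, with effective parameter a constant multiple of $\alpha$.

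Having checked all three hypotheses, I would invoke Lemma~\ref{lem:reverseEML} to conclude that the spectral radius of $A$ restricted to the orthogonal complement of the Perron direction is $O(\alpha(\log(d/\alpha)+1))$; equivalently, every eigenvalue of $G$ except the largest is bounded in absolute value by $O(\alpha(\log(d/\alpha)+1))$, which is exactly the claim. The main obstacle is the bookkeeping around the centering term: the lemma bounds the raw bilinear form $\vecchi_S^\top A \vecchi_T$, whereas the corollary's hypothesis bounds the \emph{discrepancy} $e(S,T) - \abs{S}\abs{T}d/n$, so the crux is showing that the subtracted expectation term contributes only at the scale already permitted by the lemma's conclusion rather than at a scale that would weaken the bound.
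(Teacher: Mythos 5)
There is a genuine gap here, and it is structural rather than a matter of bookkeeping. You apply Lemma~\ref{lem:reverseEML} to $B=A$ itself, but the conclusion of that lemma bounds the spectral radius of the matrix you feed it, and the spectral radius of $A$ is $d$. The lemma says nothing about ``the spectral radius of $A$ restricted to the orthogonal complement of the Perron direction''; no such restricted statement appears in it, so even if its hypotheses held you would only learn $d = O(\alpha(\log(d/\alpha)+1))$, which is not the claim. Relatedly, your handling of the centering term is invalid: the lemma's hypothesis demands $\abs{\vecu^\top B\vecv}/(\|\vecu\|\|\vecv\|)\le \alpha$, and a bound of the form $\alpha + d\sqrt{\abs{S}\abs{T}}/n \le \alpha + d/2$ cannot be ``absorbed into the conclusion'' --- a hypothesis cannot be weakened by big-$O$ reasoning that lives in the conclusion. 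If you instead rerun the lemma with the inflated parameter $\alpha' = \alpha + d/2$, its conclusion becomes $O(\alpha'(\log(d/\alpha')+1)) = O(d)$, trivially true for $A$ and useless. Nor is there a valid ``reduction to small sets'': the lemma quantifies over all pairs of disjointly supported $0/1$ vectors, including those of support size $n/2$.

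The correct route --- which is both the Bilu--Linial argument and exactly what this paper does for the irregular analogue (Corollary~\ref{cor:reverseIrregEML}, where the centered matrix is $A-\lambda_1\vecnu\vecnu^\top$) --- is to center \emph{before} invoking the lemma: set $B = A - \frac{d}{n}J$. Then for disjoint $S,T$ one has $\vecchi_S^\top B\,\vecchi_T = e(S,T) - \frac{d\abs{S}\abs{T}}{n}$, so the corollary's discrepancy hypothesis is verbatim the lemma's bilinear-form hypothesis with parameter $\alpha$. The rows of $B$ have $\ell_1$ norm at most $2d$, and the diagonal entries equal $-d/n$, which the hypothesis itself forces to be $O(\alpha)$ (take $S=\{u\}$ and $T$ the set of non-neighbors of $u$, say). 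Finally, since $\j$ is the Perron eigenvector of a $d$-regular graph, $B$ has the same eigenvectors as $A$ with eigenvalues $0,\lambda_2,\dots,\lambda_n$, so the spectral radius of $B$ is exactly $\max\{\abs{\lambda_2},\abs{\lambda_n}\}$, and Lemma~\ref{lem:reverseEML} now yields precisely the stated bound on all but the largest eigenvalue. In short, the subtracted term $d\abs{S}\abs{T}/n$ is not a nuisance to be argued away; it is the device that makes the lemma applicable, because it corresponds to replacing $A$ by a matrix whose top eigenvalue has been removed.
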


The authors of~\cite{BL2005} also remark that a converse to the bipartite Expander Mixing Lemma for biregular graphs (see Corollary~\ref{cor:unweightedBipartiteEML} where~$d_L=d_R$) can be obtained in a similar manner. This follows directly from the proof of Corollary~\ref{cor:reverseEML} by replacing~$J$ with~$C$, where~$c_{ij} = 0$ whenever~$i,j$ are in the same cell of the bipartition and two otherwise. Using a similar approach, we obtain a converse to Theorem~\ref{th:PerronEML}, the Expander Mixing Lemma based on the Perron eigenvector. Note that eigenvector norms were also used to derive a converse to the Expander Mixing Lemma in~\cite{kenter2016} for~$d$-regular graphs. However, this result is a converse to the classic Expander Mixing Lemma in Theorem~\ref{th:emlOriginal}, whereas our result below is a converse to Theorem~\ref{th:PerronEML}.

\begin{corollary}
Let~$G$ be a graph on~$n$ vertices with normalized Perron eigenvector~$\vecnu$. Suppose that for any~$S,T \subset V(G)$, with~$S\cap T = \emptyset$
\[\left | e(S,T) - \lambda_1 \langle \vecchi_S,\vecnu \rangle\langle\vecchi_T,\vecnu\rangle \right| \leq \alpha \sqrt{\abs{S}\abs{T}}.\]
Then all but the largest eigenvalue of~$G$ are bounded, in absolute value, by~$O(\alpha(\log(\frac{d}{\alpha})+1))$.
\label{cor:reverseIrregEML}
\end{corollary}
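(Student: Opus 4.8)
The plan is to mirror the strategy used in the proof of Corollary~\ref{cor:reverseEML}, reducing the statement to an application of Lemma~\ref{lem:reverseEML}. The key idea is to build a real symmetric matrix $B$ whose spectral radius controls the eigenvalues of $G$ other than $\lambda_1$, and whose off-diagonal ``mass'' between disjoint sets is exactly the quantity bounded in the hypothesis. Concretely, I would set
\[
B = A - \lambda_1\,\vecnu\vecnu^{\top}.
\]
Since $\vecnu$ is the Perron eigenvector with $\norm{\vecnu}{}=1$, the matrix $\lambda_1\vecnu\vecnu^{\top}$ is the orthogonal projection onto the top eigenspace scaled by $\lambda_1$, so $B$ has the same eigenvalues as $A$ except that $\lambda_1$ is replaced by $0$. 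Hence the spectral radius of $B$ equals $\max\{\abs{\lambda_2},\abs{\lambda_n}\}$, and bounding it is exactly what we want.

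The next step is to verify the three hypotheses of Lemma~\ref{lem:reverseEML} with $\alpha$ as in the statement. First, for the quadratic-form condition: for disjoint $S,T$ with characteristic vectors $\vecchi_S,\vecchi_T\in\{0,1\}^n$, we have $\vecchi_S^{\top}B\vecchi_T = e(S,T) - \lambda_1\inn{\vecchi_S,\vecnu}\inn{\vecchi_T,\vecnu}$, which by hypothesis is at most $\alpha\sqrt{\abs{S}\abs{T}} = \alpha\norm{\vecchi_S}{}\norm{\vecchi_T}{}$, so the required bound $\abs{\vecu^{\top}B\vecv}/(\norm{\vecu}{}\norm{\vecv}{})\le\alpha$ holds. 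Second, the $\ell_1$-norm of each row of $B$: the row for vertex $u$ is $\sum_v\abs{a_{uv}-\lambda_1\nu_u\nu_v} \le d_u + \lambda_1\nu_u\sum_v\nu_v$, which can be bounded by a quantity of order $d=\Delta$ (the correction term is controlled using $\sum_v\nu_v\le\sqrt{n}$ and $\lambda_1\le\Delta$). Third, the diagonal entries: $B_{uu} = -\lambda_1\nu_u^2$, and one must check these are $O(\alpha(\log(d/\alpha)+1))$; here $\lambda_1\nu_u^2\le\lambda_1\nu_{\max}^2$, and I would argue this is small relative to $\alpha$ (intuitively, taking $S=\{u\}$ and a singleton neighbor forces $\alpha$ to be at least on the order of the local discrepancy, so $\lambda_1\nu_u^2$ cannot exceed the admissible diagonal size).

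Once these three conditions are checked, Lemma~\ref{lem:reverseEML} immediately gives that the spectral radius of $B$ is $O(\alpha(\log(d/\alpha)+1))$, and since that spectral radius equals $\max\{\abs{\lambda_2},\abs{\lambda_n}\}$, every eigenvalue of $G$ other than $\lambda_1$ is bounded in absolute value by $O(\alpha(\log(\frac{d}{\alpha})+1))$, as claimed. The main obstacle I anticipate is the bookkeeping in the diagonal-entry condition: unlike the regular case, where $B=A-\frac{d}{n}J$ has diagonal entries $-d/n$ that are uniformly tiny, here the entries $-\lambda_1\nu_u^2$ vary across vertices and must each be shown to fall within the $O(\alpha(\log(d/\alpha)+1))$ window. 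The cleanest way to handle this is to observe that the hypothesis applied to well-chosen small sets $S,T$ already lower-bounds $\alpha$ in terms of the relevant local quantities, so the diagonal bound follows without introducing new assumptions; verifying this self-consistency carefully is the only genuinely delicate part of the argument.
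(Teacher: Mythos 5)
Your proposal is correct and takes essentially the same route as the paper: the paper's proof likewise sets $B = A - \lambda_1\vecnu\vecnu^{\top}$, observes that $B$ has eigenvectors $\vecnu,\vecx_2,\dots,\vecx_n$ with eigenvalues $0,\lambda_2,\dots,\lambda_n$, and applies Lemma~\ref{lem:reverseEML} with $\vecu=\vecchi_S$, $\vecv=\vecchi_T$. You actually go further than the paper, which invokes the lemma without explicitly checking its row-sum and diagonal-entry hypotheses --- the bookkeeping you rightly flag as the only delicate part.
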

\begin{proof}
Let~$B=A-\lambda_1M$, where~$M=\vecnu \vecnu^\top$. If~$\vecnu,\vecx_2,\dots,\vecx_n$ is an orthonormal basis of eigenvectors for~$A$, then
\[(M\vecnu)_j = \nu_j\sum_{i=1}^n\nu_i^2 = \nu_j, \ \ \ (M\vecx_i)_j = \nu_j\langle \vecnu,\vecx_i\rangle = 0.\]
In other words,~$B$ also has eigenvectors~$\vecnu,\vecx_2,\dots,\vecx_n$ with eigenvalues~$0,\lambda_2,\dots,\lambda_n$. The result then follows from Lemma~\ref{lem:reverseEML} with~$\vecu=\vecchi_S$ and~$\vecv=\vecchi_T$.
\end{proof}

Since Lemma~\ref{lem:reverseEML} assumes~$\vecu,\vecv\in \{0,1\}^n$, we cannot use this lemma to obtain a converse to Theorem~\ref{th:weightedEML}, as this requires~$\vecu$ and~$\vecv$ to be defined as
\[u_i = \left\{
    \begin{array}{ll} \nu_i & \text{if }i\in S\\
    0 & \text{otherwise,}
    \end{array}
 \right. \ \ \ \ \ \ v_i = \left\{
    \begin{array}{ll} \nu_i & \text{if }i\in T\\
    0 & \text{otherwise.}
    \end{array}
 \right. \]
Instead, a converse can be obtained directly from~\cite[Theorem 2]{Butler2006} by substituting~$B=A\circ \vecnu \vecnu^\top$ (where~$\circ$ denotes the entry-wise product).

\begin{corollary}
Let~$G=(V,E)$ be a graph on~$n$ vertices with adjacency matrix~$A$ and normalized Perron eigenvector~$\vecnu$. Suppose that for any~$S,T \subset V$
\[\left | \tilde{b}^*_{ST} - \lambda_1 \|\rho(S)\|^2\|\rho(T)\|^2 \right| \leq \alpha \|\rho(S)\|\|\rho(T)\|,\]
where~$\tilde{b}^*_{ST} = \sum_{u\in S, v\in T}a_{uv}\nu_u \nu_v$.
Then~$\lambda/\lambda_1 = O(\alpha(\log(1/\alpha)+1))$.
\label{cor:reverseHaemersEML}
\end{corollary}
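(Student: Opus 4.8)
The plan is to invoke the converse result of Butler for nonnegative matrices, namely \cite[Theorem 2]{Butler2006}, applied to the cleverly chosen matrix $B = A\circ \vecnu\vecnu^\top$ (the Schur/entry-wise product of the adjacency matrix with the rank-one matrix $\vecnu\vecnu^\top$), exactly as the surrounding text foreshadows. The first step is to verify that this $B$ has the right structure so that Butler's hypotheses and conclusion translate into the statement at hand. Concretely, the $(u,v)$-entry of $B$ is $a_{uv}\nu_u\nu_v$, so for vertex subsets $S,T$ the bilinear form $\inn{\vecchi_S, B\vecchi_T} = \sum_{u\in S, v\in T} a_{uv}\nu_u\nu_v = \tilde b^*_{ST}$ recovers precisely the weighted edge count appearing in the hypothesis.

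Next I would identify the row/column normalizers that Butler's theorem uses. Since $\vecnu$ is the Perron eigenvector with $A\vecnu = \lambda_1\vecnu$, the $u$-th row sum of $B$ is $\sum_v a_{uv}\nu_u\nu_v = \nu_u\sum_{v\in G(u)}\nu_v = \nu_u\cdot\lambda_1\nu_u = \lambda_1\nu_u^2$. Thus in Butler's notation the diagonal matrices $R$ and $C$ both equal $\lambda_1\,\diag(\nu_u^2)$, and one computes $\inn{\vecchi_S, B\j} = \lambda_1\sum_{u\in S}\nu_u^2 = \lambda_1\norm{\rho(S)}{}^2$, with the analogous identity for $T$ and for the total mass $\inn{\j,B\j}=\lambda_1$. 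Feeding these into Butler's expected-value term $\inn{\vecchi_S,B\j}\inn{\j,B\vecchi_T}/\inn{\j,B\j}$ yields exactly $\lambda_1\norm{\rho(S)}{}^2\norm{\rho(T)}{}^2$, matching the centering term in the hypothesis. The weighting $\norm{\rho(S)}{}\norm{\rho(T)}{}$ on the right-hand side likewise arises from $R^{1/2}$ and $C^{1/2}$ applied to the characteristic vectors.

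The remaining task is to match the spectral conclusion: I must check that the relevant singular value of Butler's normalized matrix $R^{-1/2}BC^{-1/2}$ is exactly $\lambda/\lambda_1$. With $R=C=\lambda_1\diag(\nu_u^2)$, the normalized matrix has $(u,v)$-entry $a_{uv}\nu_u\nu_v / (\lambda_1\nu_u\nu_v) = a_{uv}/\lambda_1$, i.e.\ it equals $A/\lambda_1$. Its singular values are therefore $\abs{\lambda_i}/\lambda_1$, so the second-largest singular value is precisely $\lambda/\lambda_1$, and Butler's converse bounds this quantity by $O(\alpha(\log(1/\alpha)+1))$ (the ``degree'' parameter in Butler's bound becomes $1$ after this normalization, which is why the logarithm reads $\log(1/\alpha)$ rather than $\log(d/\alpha)$). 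Concatenating these identifications gives the claim.

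The main obstacle I anticipate is bookkeeping rather than conceptual: one must confirm that Butler's hypothesis (which is stated for the \emph{disjoint-support weighted} bilinear form over all $S,T$) lines up with the hypothesis here, and in particular that the correct normalization constant for the ``diagonal/row-sum'' quantity is $1$ after dividing through by $\lambda_1$, so that the logarithmic factor in the output is $\log(1/\alpha)$. A secondary subtlety is that Butler's theorem controls the second-largest \emph{singular value} of $R^{-1/2}BC^{-1/2}$, so I should note that for the symmetric matrix $A$ these singular values coincide with $\abs{\lambda_i}/\lambda_1$, ensuring that the bounded quantity is genuinely $\lambda/\lambda_1 = \max\{\abs{\lambda_2},\abs{\lambda_n}\}/\lambda_1$ and not some other spectral quantity.
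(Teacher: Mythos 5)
Your proposal is correct and is exactly the paper's proof: the paper likewise obtains this corollary by substituting $B=A\circ\vecnu\vecnu^{\top}$ into Butler's converse (Theorem~2 of \cite{Butler2006}), and your computations of the row sums $\lambda_1\nu_u^2$, the centering term, and the identification $R^{-1/2}BC^{-1/2}=A/\lambda_1$ (so that the second singular value is $\lambda/\lambda_1$) are precisely the details the paper leaves implicit. The one bookkeeping point you flag is worth settling explicitly: since $\sqrt{\langle\vecchi_S,B\j\rangle\langle\j,B\vecchi_T\rangle}=\lambda_1\|\rho(S)\|\,\|\rho(T)\|$, the stated hypothesis corresponds to Butler's discrepancy parameter $\alpha/\lambda_1\le\alpha$, and because $x\mapsto x(\log(1/x)+1)$ is increasing on $(0,1]$ and $\lambda_1\ge 1$, this only strengthens the claimed conclusion.
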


\section{Concluding remarks}

The Expander Mixing Lemma is a powerful tool in spectral graph theory and the study of pseudorandom
structures. Many variations of it have been introduced in the literature. In this article, we propose an overarching theorem which encompasses these known versions and implies several interesting new ones. Table~\ref{tab:summary} provides an overview of existing and new Expander Mixing Lemmas for general graphs, and how they follow from our main result (Theorem~\ref{th:superEML}).

{\footnotesize{
\begin{table}[htb!]
\centering
\noindent\begin{tabular}{|l|l c|}
    \hline
     Choice of parameters in Theorem~\ref{th:superEML} & \multicolumn{2}{l|}{Consequence}\\
      \hline
      $G$ graph, $\vecx=\vecy=\vecnu$, $B=A$ & Theorem~\ref{th:PerronEML} &\\
      $G$ graph, $\vecx=\vecy=\j$, $B=A$ & Theorem~\ref{th:strongVolEML} & \cite{chung2004}\\
      $G$ graph, $\vecx=\vecy=\vecnu$, $B=A\circ \vecnu\vecnu^{\top}$ & Theorem~\ref{th:weightedEML} &\\
      $G$ $d$-regular graph, $\vecx=\vecy=\j$, $B=A^{(k)}$ & Theorem~\ref{th:kEML} & \cite{ACT2016}\\
      $G$ graph, $\vecx=\vecy=\vecnu$, $B=P(A)$ & Theorem~\ref{th:generalPolyEML} &\\
      $B>0$, no zero rows or columns, $\vecx=\vecy=\j$ & Stronger version & \cite{Butler2006}\\
       & of Theorem~\ref{th:nonsquareEML} &\\
      \hline
\end{tabular}
\caption{An overview of the variants of the Expander Mixing Lemma for general graphs and their relation to the new Theorem~\ref{th:superEML}. }
\label{tab:summary}
\end{table}
}}

In Section \ref{sec:applications}, known bounds on the zero forcing number, vertex integrity, routing number and $k$-independence number are extended to irregular graphs using the new versions of the Expander Mixing Lemma based on the Perron eigenvector. A natural open problem to explore next, is to find other NP-hard graph problems where this technique leads to new results. In particular, we are interested in applications where the sets~$S$ and~$T$ are not disjoint, since in this case the Laplacian version by Chung (Theorem~\ref{th:LaplacianEML}) has many extra terms in the left-hand side, and hence is not easily applicable. Moreover, it would be interesting to find applications of Theorem~\ref{th:superEML} that make use of its generality, involving non-square matrices that need not correspond to graphs. Finally, an interesting open question is whether one can get rid of the assumption that~$B$ must be nonnegative and $\vecx,\vecy$ strictly positive in Theorem~\ref{th:superEML}. While this seems unavoidable in our current proof, the Expander Mixing Lemma for matrices by Nikiforov~\cite{nikiforov2009} requires no such assumptions, so perhaps this result could be acquired with different proof techniques. If this is the case, we could choose $\vecx=-\vecy=\frac{1}{\mu}\j$, $B=L+\mu I$, $\mu = -\frac{\mu_1+\mu_{n-1}}{2}$, where $\mu_0\le \dots\le \mu_{n-1}$ are the Laplacian eigenvalues of the graph, and so the Expander Mixing Lemma for Laplacian eigenvalues by Chung \cite{chung2004} would also follow as a consequence of our main result.

\subsection*{Acknowledgments}
Aida Abiad is supported by the Dutch Research Council through the grant \linebreak VI.Vidi.213.085.
The authors thank the referee for the many detailed comments.
The authors would also like to thank Sam Adriaensen and Mike T8 for inspiring discussions. 


\end{document}